\documentclass[11pt]{article}

\usepackage{etoolbox}
\usepackage{authblk}
 \usepackage{orcidlink}
\usepackage{appendix}
\usepackage{comment}
\usepackage{array}
\usepackage{enumerate,amsmath,amsfonts,amsthm,amssymb,graphicx}
 \usepackage{cite}
\usepackage{tikz}\usetikzlibrary{matrix, calc, arrows, lindenmayersystems, decorations.pathreplacing, shapes.geometric}
\usepackage{hyperref} 
\usepackage{subcaption}
\usepackage[labelformat=simple]{subcaption}

 \numberwithin{equation}{section}

\definecolor{myblue}{rgb}{0,0,0.6}     
\hypersetup{pdftitle={A DG method with fractal elements},  pdfauthor={Gomez, Hewett, Moiola},
     bookmarksopen=true, bookmarksopenlevel=4,
     colorlinks=true, linkcolor=myblue,  citecolor=myblue, filecolor=myblue,   urlcolor=myblue,  }
\hypersetup{hypertexnames=false}       
\graphicspath{{./}{figs/}}
\allowdisplaybreaks[4]

\title{A discontinuous Galerkin method with fractal elements}

\author[$*\dagger$]{S.\ G\'omez\,\orcidlink{0000-0001-9156-5135}}
\author[$\S$]{D.\ P.\ Hewett\,\orcidlink{0000-0003-3302-2567}}
\author[$\ddagger\dagger$]{A. Moiola\,\orcidlink{0000-0002-6251-4440}}
\affil[$*$]{\footnotesize Department of Mathematics and Applications, University of Milano-Bicocca, Milan, Italy}
\affil[$\dagger$]{IMATI-CNR ``E. Magenes'', Pavia, Italy}
\affil[$\S$]{\footnotesize Department of Mathematics, University College London, London, United Kingdom}
\affil[$\ddagger$]{\footnotesize  Department of Mathematics ``F. Casorati'',  University of Pavia, Pavia, Italy}

\newcommand{\rd}{\mathrm{d}}
\newcommand{\R}{\mathbb{R}}

\newcommand{\N}{\mathbb{N}}

\newcommand{\cH}{\mathcal{H}}
\newcommand{\cI}{\mathcal{I}}
\newcommand{\cT}{\mathcal{T}}

\newcommand{\bn}{\mathbf{n}}
\newcommand{\bp}{\mathbf{p}}

\newcommand{\bv}{\mathbf{v}}
\newcommand{\bx}{\mathbf{x}}
\newcommand{\by}{\mathbf{y}}

\newcommand{\dive}{\boldsymbol{\nabla}\cdot}
\newcommand{\eps}{\varepsilon}
\newcommand{\norm}[2]{\left\|#1\right\|_{#2}}

\newtheorem{thm}{Theorem}[section]
\newtheorem{lem}[thm]{Lemma}

\newtheorem{prop}[thm]{Proposition}

\newtheorem{rem}[thm]{Remark}

\newtheorem{ass}[thm]{Assumption}

\newcommand{\dimH}{{\rm dim_H}}
\newcommand{\deO}{{\partial\Omega}}
\newcommand{\OO}{{(\Omega)}}

\newcommand{\Tr}{{\mathrm{Tr}}}
\newcommand{\cF}{{\mathcal{F}}}

\newcommand{\dn}{{\partial_\bn}}
\newcommand{\deK}{{\partial K}}

\newcommand{\cL}{{\mathcal L}}

\newcommand{\trd}{\blacktriangledown}
\newcommand{\tru}{\blacktriangle}
\newcommand{\loz}{\blacklozenge}
\renewcommand{\dive}{\mathrm{div}}
\newcommand{\DG}{_{\mathrm{DG}}}
\newcommand{\DGp}{_{\mathrm{DG^+}}}

\newcommand{\IP}{\mathbb{P}}

\newcommand{\GG}{(\Gamma)}

\newcommand{\IL}{\mathbb{L}}

\DeclareMathOperator{\diam}{diam}
\DeclareMathOperator{\dist}{dist}

\newcommand{\aDG}{a_{\rm SIP}}
\newcommand*{\jmp}[1]{[\![#1]\!]}                      
\newcommand*{\mvl}[1]{\{\!\!\{#1\}\!\!\}}              
\newcommand{\Csip}{C_{\mathrm{SIP}}}

\DeclareMathOperator{\card}{card}
\newcommand{\Order}[1]{\mathcal{O}(#1)}

\newcommand{\gh}{g_h}
\newcommand{\ch}{c_h}

\newcommand{\ph}{p_h}

 \newcommand{\dx}{\,\mathrm{d}\bx}
\newcommand{\ds}{\,\mathrm{d}s}
\newcommand{\dhx}{\,\rd\widehat{\bx}}
\newcommand{\dhS}{\,\rd\widehat{s}}
\newcommand{\dH}{\,\rd \cH}
\newcommand{\dhH}{\,\rd \widehat{\cH}}

  \newcommand{\Th}{\cT}
\newcommand{\hatK}{\widehat{K}}
\newcommand{\hphi}{\widehat{\phi}}
\newcommand{\hK}{h_K}
\newcommand{\hF}{h_F}

\newcommand{\NT}{N_{\cT}}
\newcommand{\Np}{N_p}

 \newcommand{\Vh}{V_h}
\newcommand{\uh}{u_h}
\newcommand{\vh}{v_h}
\newcommand{\wh}{w_h}

  \newcommand{\ADG}{\mathsf{A}_{\mathrm{SIP}}}
\newcommand{\sG}{\mathsf{G}}
\newcommand{\sC}{\mathsf{C}}
\newcommand{\sP}{\mathsf{P}}
\newcommand{\sfb}{\mathsf{b}}
\newcommand{\su}{\mathsf{u}}
\newcommand{\sM}{\mathsf{M}}

\begin{document}
\maketitle
\renewcommand{\thefootnote}{\arabic{footnote}}
\begin{abstract}
\noindent We formulate, analyse, and implement a discontinuous Galerkin finite element 
method (DG-FEM) for the approximation of the solution of an elliptic boundary value problem in a domain with fractal boundary. 
We consider the case of the Poisson equation in the Koch snowflake domain with zero Dirichlet boundary conditions, but our methodology can be generalised to other cases. 
Rather than first approximating the snowflake domain by a polygonal ``prefractal''  and then applying a standard DG-FEM on 
 the prefractal, we define a DG-FEM on the snowflake itself, using a geometry-conforming mesh (a fractal tiling) consisting of fractal elements, each similar to the original snowflake.  
Fluxes across inter-element boundaries, which are fractal curves, are represented in a weak way by integrals over element subdomains. We show how, for local polynomial basis functions, these integrals can be evaluated exactly using the similarity of the elements. We prove well-posedness and quasi-optimality of the method, and provide a partial convergence analysis. 
We present numerical results for piecewise linear and piecewise quadratic basis functions, which demonstrate the effectiveness of the method. 
We also apply our method to the 
 related Dirichlet eigenvalue problem. 

\bigskip
\textbf{Keywords:}
Discontinuous Galerkin method, 
Symmetric interior penalty,
Fractals,
Iterated function systems,
Koch snowflake

\bigskip
\textbf{Mathematics Subject Classification (2020):}
28A80,   	 65N30,  	     65N12,   65N15   
\end{abstract}

\section{Introduction}

  The study of PDEs on fractal domains is a growing field, motivated by numerous real-world applications in which physical boundaries and interfaces possess complicated multiscale microstructure. Recent work includes
  studies relating to the Poisson~\cite{achdou2016transmission,
 lancia2002transmission,
capitanelli2016dynamical,
capitanelli2010robin,
  bagnerini2006finite,Bagnerini13},
Helmholtz  \cite{Achdou07,bannister2022acoustic,
chandler2021boundary,
chandler2018well,caetano2025integral,caetano2024hausdorff, CaChHe25,claret2026helmholtz}, 
modified Helmholtz  \cite{claret2024convergence}, 
heat  \cite{lancia2010irregular,
lancia2012numerical,
cefalo2023fractal}, {and}
Stokes {equations}  \cite{cefalo2021approximation,
lancia2020stokes}, {as well as}
  magnetostatic problems 
\cite{creo2020magnetostatic}  and spectral problems for the Laplacian 
\cite{sapoval1991vibrations,lapidus1995fractals,Banjai:2007,grebenkov2013geometrical,
rosler2024computing,
rosler2024res}.  

\begin{figure}[t]
\centering
\begin{subfigure}[t]{0.3\textwidth}
\includegraphics[height=40mm]{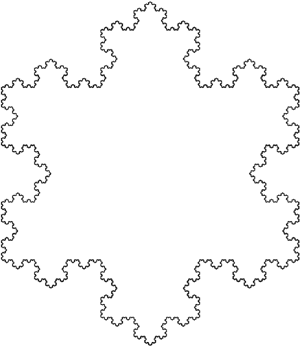}
\centering
\caption{Koch snowflake domain\label{f:koch_snowflake}}
\end{subfigure}
\begin{subfigure}[t]{0.3\textwidth}
\includegraphics[height=40mm]{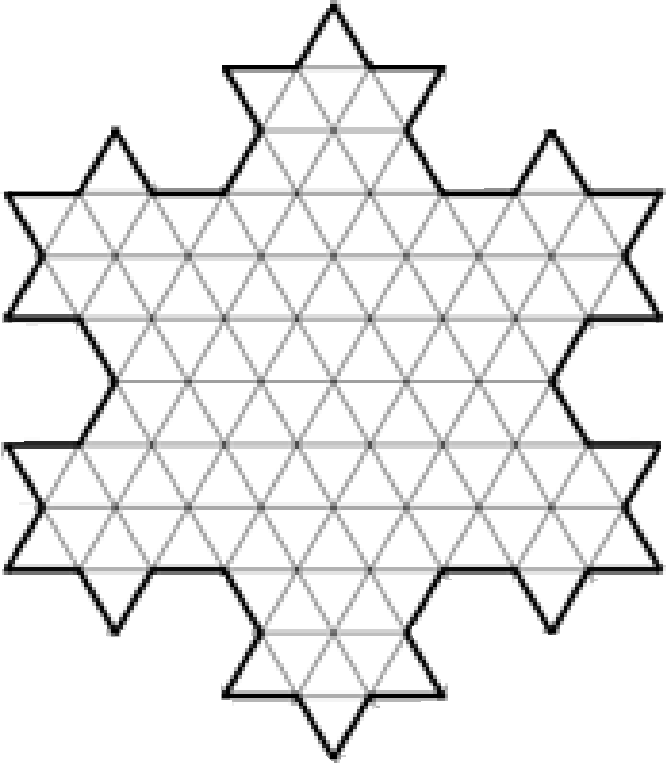}
\centering
\caption{Prefractal mesh\label{f:prefractal-Koch}}
\end{subfigure}
\begin{subfigure}[t]{0.3\textwidth}
\includegraphics[height=40mm]{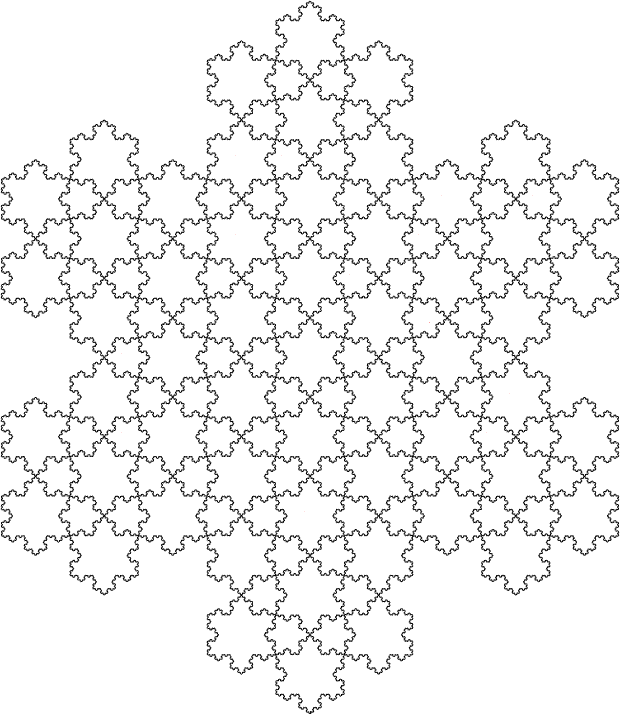}
\centering
\caption{Geometry-conforming mesh\label{f:geometry-conforming-mesh}}
\end{subfigure}
\caption{The Koch snowflake domain and illustrations of two different meshing strategies.
In this paper, we study the ``geometry-conforming'' approach (c), using a mesh with fractal elements.}
\label{f:Disc}
\end{figure}

To obtain a numerical solution to a PDE on a domain with fractal boundary such as the Koch snowflake domain pictured in Figure \ref{f:koch_snowflake}, one cannot directly apply a classical conforming finite element method (FEM) \cite{Ciarlet78,brenner2008mathematical}, because the domain cannot be meshed with a finite number of simplicial {or quadrilateral} elements (even if curvilinear). 
A natural idea (see, e.g., \cite{bagnerini2006finite}) is to first approximate the domain by a polygonal/polyhedral ``prefractal'' domain and mesh this using simplicial elements, as illustrated in Figure \ref{f:prefractal-Koch}.
However, the fractality of the domain boundary implies that obtaining accurate numerical approximations from such an approach requires a prefractal approximation with a very complex boundary. 
  This, in turn, leads to highly complex meshes with a large number of elements due to the constraints of mesh conformity and shape regularity. For an example of the extreme mesh complexity that can arise in fractal geometries with conforming FEM, see \cite{bagnerini2006finite,Bagnerini13}. 

In this paper, we present a discontinuous Galerkin FEM (DG-FEM) for the solution of a PDE on a domain with a fractal boundary, based on a radically different meshing strategy that completely captures the geometry of the domain and avoids any prefractal approximation. 
Our method uses geometry-conforming meshes, inspired by the theory of iterated function systems (IFS) and fractal tilings, consisting of a finite number of elements with fractal boundary, as illustrated in Figure~\ref{f:geometry-conforming-mesh}.  
It is well known  that DG-FEMs offer increased flexibility in mesh design compared to classical conforming FEM, permitting the use of more general element shapes and removing mesh conformity requirements. 
However, to date, the study of DG-FEM has been restricted to Lipschitz domains and Lipschitz mesh elements;  
the state of the art being the work of Cangiani et al.\ in  \cite{cangiani2022hp}, which builds on earlier related work including~\cite{cangiani2017hp}. 
To the best of our knowledge, the present work is the first {to propose and analyse a} DG-FEM for a non-Lipschitz domain using non-Lipschitz elements.

We restrict our attention here to a specific model problem, namely, the Poisson problem 
\begin{align}
\label{e:Poisson}
-\Delta u = f \quad \text{on } \Omega, \qquad \text{with }\,\, u=0 \quad \text{on } \partial\Omega,
\end{align}
 in the case where $\Omega$ is the Koch snowflake domain pictured in Figure~\ref{f:koch_snowflake}.  However, we expect that the ideas presented here can be generalised to the study of a broad class of PDEs on other fractal domains, and such extensions are currently under investigation. 

Our approach is based on the classical symmetric interior penalty (SIP) DG-FEM of~\cite{arnold1982interior}.  Using the ideas presented here one could also consider other DG-FEM formulations, but we leave this for future work. 
 \paragraph{{SIP-DG-FEM on polygonal domains.}} To provide some context, we briefly review the $h$-version SIP method for \eqref{e:Poisson}, in the case where $\Omega\subset\R^2$ is a polygonal domain meshed with a classical conforming triangulation, as in Figure~\ref{f:prefractal-Koch}.

Suppose that $\Omega\subset\R^2$ is a polygon. Let $\cT$ be a triangulation of $\Omega$ comprising a finite collection of disjoint triangular elements of maximum mesh width $h>0$, as in Figure \ref{f:prefractal-Koch}, and let $\mathcal{F}$ denote the set of all (straight-line) element edges. 
Fix $p\in \mathbb{N}$,  and let $V_h$ be the set of functions on $\Omega$ whose restriction to each $K\in \cT$ is a polynomial of total degree at most $p$.
The SIP-DG-FEM for \eqref{e:Poisson} then reads: find $u_h\in V_h$ such that
\begin{align}
\aDG(u_h,v_h) =\cL(v_h) \qquad \forall v_h\in V_h,
\label{Variational}
\end{align}
where the linear functional $\cL(v_h):=\int_\Omega fv_h \dx $, and the bilinear form
\begin{align}
\aDG(u_h,v_h):= 
\sum_{K\in \cT}\int_{K} \nabla u_h\cdot \nabla v_h \dx 
+ \sum_{F\in \cF}\int_{F} \big(\underbrace{-\{\partial_\bn u_h\}  [v_h]}_{\text{consistency}} 
\underbrace{-\{\partial_\bn v_h\} [u_h]}_{\text{symmetry}} 
\underbrace{+\alpha [u_h][v_h] }_{\text{penalty}}\big)\ds, 
\label{aDef}
\end{align}
for some positive function $\alpha$, piecewise-constant with respect to the mesh {$\cT$}.
 In~\eqref{aDef}, if $F = \partial K_+ \cap \partial K_-$ for two neighbouring elements $K_+$ and $K_-$, then {the average~$(\{\cdot\})$ and jump~$([\cdot])$ operators are defined as follows:}
\[ \{v\} := \tfrac{1}{2}(v|_{K_+} + v|_{K_-}), \qquad [v] := v|_{K_+} - v|_{K_-},\]
and, if $F$ is a boundary edge, $\{v\}$ and $[v]$ are simply the trace of~$v$ on~$F$. 
In the expression $\{\partial_\bn v\}[w]$, $\partial_\bn v$ denotes the normal derivative of $v$ on $F$ with $\bn$ being the unit normal vector pointing outward from $K_+$.
In \eqref{aDef}, the first term is an elementwise version of the standard FEM bilinear form, the second term ensures consistency with \eqref{e:BVP101} (as can be checked by integrating by parts elementwise), the third term makes $\aDG(u_h,v_h)$ symmetric with respect to $u_h$ and $v_h$ (giving optimal convergence rates in $L_2(\Omega)$ and permitting easier solution of the linear system), and the final penalty (or stabilization) term can be tuned (via $\alpha$) to make $\aDG(\cdot, \cdot)$ coercive with respect to a suitable norm, which ensures well-posedness of~\eqref{Variational}.

\paragraph{Main challenges and novelty.}
In generalising the SIP-DG-FEM \eqref{Variational}--\eqref{aDef} to a  geometry-conforming mesh of the Koch snowflake, as in Figure~\ref{f:geometry-conforming-mesh}, there are a number of challenges to overcome:
 \begin{itemize} \item
 \textbf{Boundary integrals:}\\
Given that the mesh elements $K \in \cT$ now have fractal boundaries,  the edges $F\in \mathcal{F}$ are fractal curves. So what should replace the surface measure $s$ in integrals like $\int_F [u_h][v_h] \ds$?

\item
 \textbf{Normal derivatives:}\\
Given that a unit normal does not exist on $F$, how should one define the terms involving $\partial_\bn$?

\item
 \textbf{Numerical integration:}\\
How can the resulting integrals be computed numerically?

\item
 \textbf{Penalty term:}\\
How should one choose the penalty function $\alpha$ to ensure the well-posedness of the method?   
    
\end{itemize}

To deal with these challenges, we exploit results obtained recently by two of the authors and their collaborators, relating to trace operators, approximation theory, and numerical quadrature on fractal sets \cite{caetano2025integral,
caetano2019density, caetano2024hausdorff, CaChHe25, nondisjoint, disjoint, Hewett25}. 
Regarding boundary integrals, we replace the surface measure $s$ by the $d$-dimensional Hausdorff measure $\mathcal{H}^d|_F$, where $d$ is the fractal (Hausdorff) dimension of $F$, which equals $\log{4}/\log{3}$ for the Koch snowflake.
For normal derivatives, we replace terms involving $\partial_\bn$ by weak versions, comprising the sum of integrals over certain open subsets of $K_\pm$ and integrals over certain line segments (see Figure \ref{fig:wedgesB}(a)). 
For numerical integration, we apply and extend the classical results of 
\cite{barnsley1985iterated, vrscay1989iterated,
strichartz2000evaluating}
on the evaluation of moments of self-similar measures, and the more recent related work in \cite{disjoint,nondisjoint}, to derive exact integration formulas for polynomials, and approximate quadrature rules for non-polynomial functions.
 Finally, for the penalty term, informed by trace theorems for fractal sets, combined with the scaling properties of Hausdorff measures, we choose $\alpha$ on each mesh face $F$  to be equal to a global constant $\eta>0$ (required to be sufficiently large to ensure discrete coercivity) 
 multiplied by $h_F^{-d}$, where $h_F$ is the diameter of $F$.

Having defined the SIP-DG-FEM on our fractal domain, we prove its well-posedness and quasi-optimality (\S\ref{s:Coercivity}--\S\ref{s:QO}). We also provide a partial $h$-convergence analysis of the method, combining known results about the regularity of the PDE solution $u$ of \eqref{e:Poisson} with polynomial approximation estimates for functions with such regularity. 
For the former, we  
report results from \cite{NystromThesis} and \cite{capitanelli2015weighted}, which give regularity estimates for $u$ in Sobolev norms on $\Omega$ weighted by a power of the distance to the boundary of $\Omega$.
The sharpness of these results is not clear, but one can view them as a fractal analogue of classical elliptic solution regularity results in curvilinear polygons (e.g., \ \cite{BGhpCurv}), where the solution singularities are concentrated on the domain corners as opposed to the full domain boundary.
 Theorem~\ref{thm:Rates} 
yields a priori error estimates from the approximation properties of the polynomial space. 
However, our convergence analysis  remains partial, because it relies on the assumption of certain  approximation results on $\Omega$ (detailed in Assumption \ref{ass:H22}), that we cannot currently prove. As we explain in Remark \ref{rem:Assumption}, these assumptions can be reduced to (apparently open) questions in the theory of function spaces, namely the compactness of the embeddings of certain weighted Sobolev spaces on $\Omega$ in the space $H^1(\Omega)$.
However, we leave the resolution of these questions to future work.

 To recover optimal convergence rates, we expect that the fractal mesh should be appropriately refined towards $\partial\Omega$ in order to capture the singular behavior of the derivatives of $u$. 
  The missing part of our convergence analysis prevents us from providing a rigorous assessment of how boundary refinements affect the accuracy and efficiency of the method. 
Nonetheless, we present numerical evidence suggesting that such an adaptive approach has significant benefits.

The geometry-conforming approach of the current paper builds on related work presented recently in \cite{caetano2024hausdorff,
caetano2025integral,
CaChHe25,
bannister2026scattering}  
for the solution of boundary- and volume integral equation formulations of scattering problems for the Helmholtz equation in fractal domains, and the related quadrature rules developed in \cite{disjoint,nondisjoint,joly2024high}. 
Furthermore, the geometry-conforming meshes we consider are constructed using the self-similar structure of the snowflake, specifically the fact that $\overline{\Omega}$ is the attractor of an  IFS of contracting similarities (as defined in, e.g., \cite{hutchinson1981fractals,CaChHe25}).
As such, they are closely related to the concept of a ``fractal tiling'', as studied in, e.g., \cite{bandt1991self,grochenig1994self,strichartz1999geometry,Keesling:99,BaVi:14}.
   However, we believe that this is the first application of these techniques in the context of DG-FEM.

The structure of the paper is as follows.
\S\ref{s:BVP} introduces the Poisson--Dirichlet boundary value problems on the Koch snowflake, the relevant function spaces, and the regularity results from \cite{NystromThesis,capitanelli2015weighted}.
\S\ref{s:Meshes} uses the IFS decomposition of the Koch snowflake $\Omega$  to define different families of locally quasi-uniform meshes (including globally quasi-uniform and boundary-refined meshes), whose elements are similar copies of $\Omega$ itself.
\S\ref{s:Wedge} describes the duality integral between Neumann and Dirichlet traces on fractal faces.
\S\ref{s:DG} defines the SIP-DG-FEM and shows its discrete coercivity, from which well-posedness follows.
\S\ref{s:DGErr} carries out the SIP-DG-FEM error analysis: consistency, continuity, quasi-optimality, and $h$-convergence rates.
As explained above, the latter rely on an approximation assumption that we cannot currently prove.
\S\ref{s:Quad} explains in detail how the SIP-DG-FEM linear system is assembled.
\S\ref{s:Numer} shows some numerical results for smooth and singular boundary value problems, and for the approximation of eigenproblems.
\S\ref{s:Future} lists several extension of the method proposed.
Appendix~\ref{app:A} contains some technical results needed in the DG analysis and Appendix~\ref{app:Integration} describes how to compute the integrals entering the system matrix. 
Table~\ref{tab:Notation} collects the symbols used in the paper.

\section{The boundary value problem}
\label{s:BVP}

In this section, we define the boundary value problem (BVP) we consider, and collect known results about the regularity of its solution.

\subsection{The Koch curve \texorpdfstring{$\Gamma$}{Gamma} and the Koch snowflake \texorpdfstring{$\Omega$}{Omega}}
\label{ss:Koch}
Throughout the paper, we let $\Gamma\subset \R^2$ denote the standard Koch curve, defined as the limit of the sequence of prefractal piecewise-linear curves in Figure \ref{fig:KochCurvePrefractals}, starting from the line segment $[0,1]\times \{0\}$, so that $\diam(\Gamma)=1$. 
We also let $\Omega\subset\R^2$ denote the standard Koch snowflake, defined as the limit of the sequence of prefractals shown in Figure \ref{fig:SnowflakePrefractals}, starting from the equilateral triangle with vertices $(0,1)$, $(-\frac{\sqrt3}2,-\frac12)$, $(\frac{\sqrt3}2,-\frac12)$, so that $\diam(\Omega)=2$ and the centre of $\Omega$ is at the origin. 
As is well known, $\Gamma$ is a fractal curve with zero two-dimensional Lebesgue measure and Hausdorff dimension $d:=\dimH(\Gamma)=\frac{\log4}{\log3}$. 
The snowflake $\Omega$ is a connected open set with two-dimensional Lebesgue measure (area) 
$|\Omega| = \frac{6\sqrt{3}}{5}$, 
and its boundary $\deO$ is fractal, with $\dimH(\deO)=d$, since $\deO$ is the union of six {rotated} copies of $\Gamma$ or, alternatively, the union of three {rotated} copies of $\Gamma$, each scaled by a factor $\sqrt{3}$, see Figure~\ref{fig:SnowflakeBoundaryColor}.
 
\begin{figure}[htb]
\centering
\includegraphics[width=\textwidth]{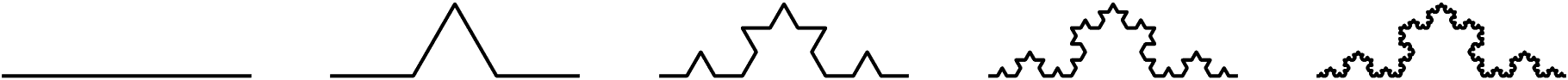}
\caption{Sequence of prefractals approximating the Koch curve $\Gamma$.}
\label{fig:KochCurvePrefractals}
\end{figure}
\begin{figure}[htb]
\centering
\includegraphics[width=\textwidth]{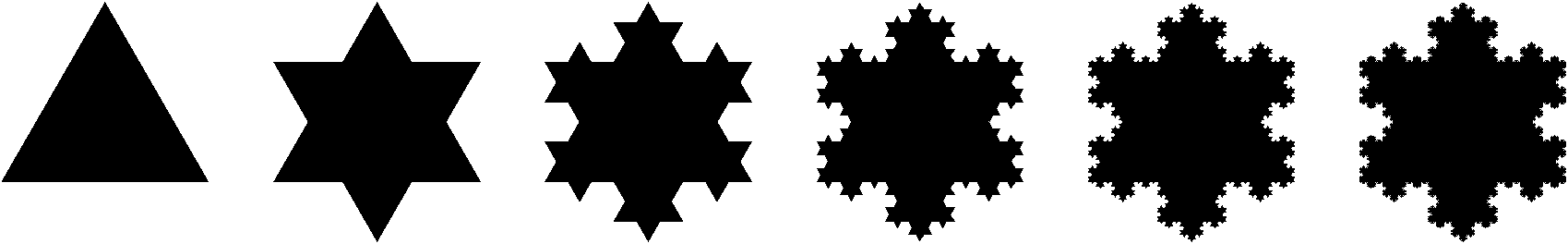}
\caption{Sequence of prefractals approximating the Koch snowflake $\Omega$.}
\label{fig:SnowflakePrefractals}
\end{figure}
\begin{figure}[htb]
\centering
\includegraphics[width=.6\textwidth]{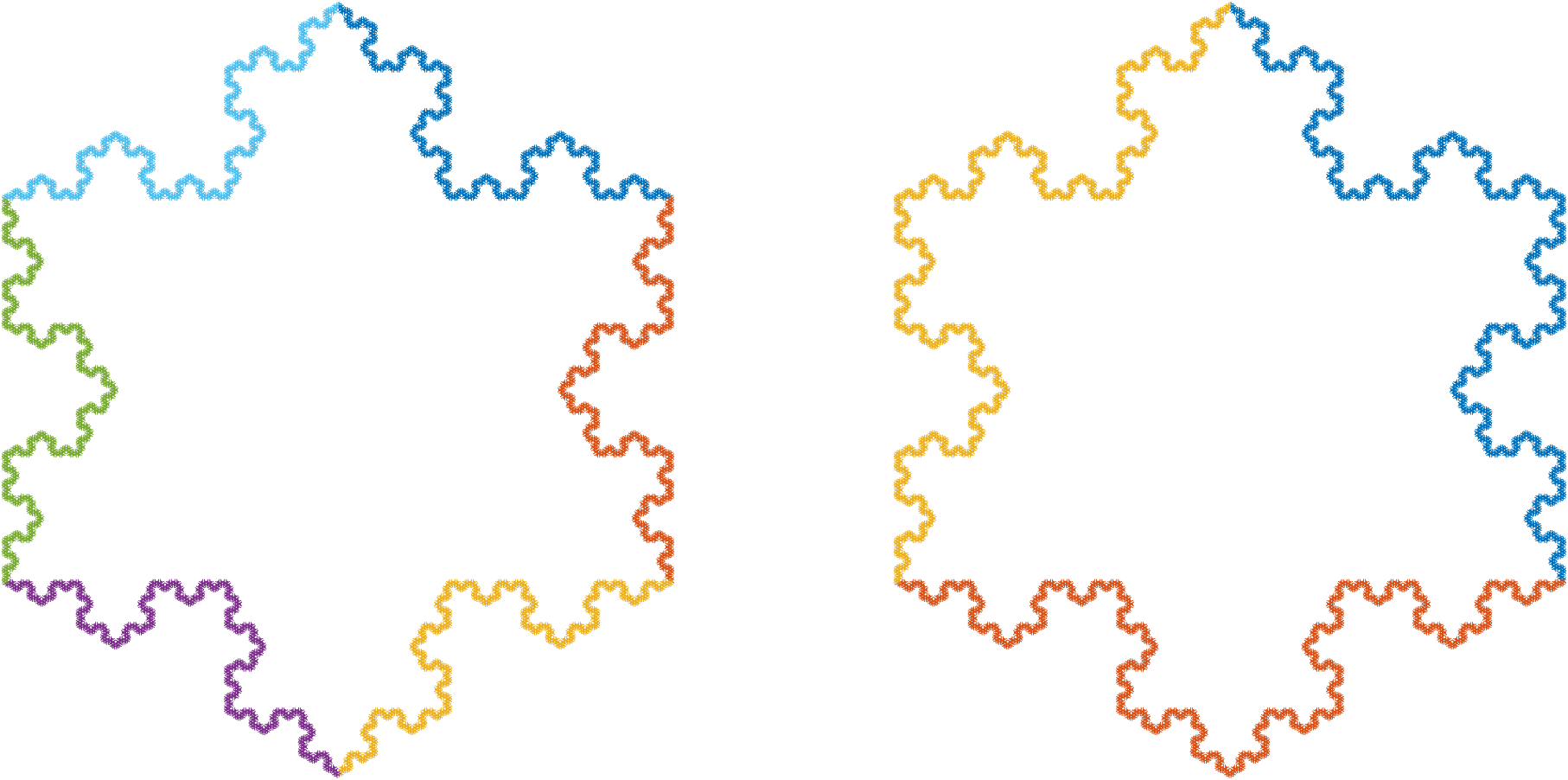}
\caption{The boundary of $\Omega$ is the union of either  {six} copies of $\Gamma$, or  {three} copies of $\Gamma$ scaled by a factor of $\sqrt{3}$.}
\label{fig:SnowflakeBoundaryColor}
\end{figure}

\subsection{The boundary value problem}\label{ss:BVP}
We consider the Dirichlet BVP: 
given $f \in L_2\OO$, find $u\in H^1_0\OO$ such that
\begin{equation}\label{e:BVP101}
-\Delta u=f \quad \text{in }\Omega.
 \end{equation}
Here, $H^1_0\OO$ is the standard Sobolev space defined as the closure of $C_0^\infty\OO$ in $H^1\OO:= \{u\in L_2\OO: \nabla u\in L_2\OO^2\}$, equipped with the usual norm.\footnote{Since $\Omega$ is an $H^1$ extension domain \cite[p.~73]{Jones}, $H^1(\Omega)$ coincides (with equivalent norms) with the space of restrictions to $\Omega$ of $H^1(\R^2)$ functions, equipped with the usual quotient norm \cite[Thm 3.18]{Mclean}.}
By the Lax--Milgram Theorem, this BVP is well-posed, with the solution being the unique $u\in H^1_0\OO$ such that
$\int_\Omega \nabla u\cdot \nabla v \dx  = \int_\Omega f v \dx $ for all $v\in H^1_0\OO$.

\subsection{Solution regularity: the weighted space \texorpdfstring{$H^{2,2}_\mu(\Omega)$}{H22mu}}
\label{ss:H22}

It was shown in \cite[Thms 5 and 1.3]{NystromThesis} that the solution $u$ of the BVP \eqref{e:BVP101} is H\"older continuous with $u\in C^{0,1/3}(\overline\Omega)$, and belongs to~$W^1_3\OO:=\{u\in L_3\OO: \nabla u\in L_3\OO^2\}$ 
 and to the
   weighted Sobolev space
\begin{align}\label{e:H22mu}  H^{2,2}_\mu\OO :=\big\{ v\in H^1\OO:\ \delta^\mu {D^2v\in L_2\OO}\big\},
\qquad D^2 v:=\big(|\partial_{x_1}^2 v|^2+|\partial_{x_1}\partial_{x_2} v|^2+|\partial_{x_2}^2 v|^2\big)^\frac12,
\end{align}
for some $\mu\in(0,1)$, where 
\begin{equation}\label{e:delta}
\delta(\bx):=\dist(\bx, \deO)\in C^{0,1}(\overline\Omega). 
\end{equation}
The space $H^{2,2}_\mu\OO$ is a Hilbert space with the norm
 $\|v\|_{H^{2,2}_\mu\OO}^2:=\|v\|_{H^1\OO}^2 +  \|\delta^\mu D^2 v\|_{L_2\OO}^2 $.
 Specifically, \cite[{Thms}~5 and 1.3]{NystromThesis}  give that $u\in H^{2,2}_\mu\OO $ for all $\mu>\frac{2}{3}+\frac{1}{3}\frac{\log2}{\log3}\approx 0.877$, while \cite[{Thm}~4.1]{capitanelli2015weighted} improves this to $\mu>\mu_*\approx 0.828$.
 {Moreover, \cite[{Thm}~2]{NystromThesis} states that, if $f\in C^\infty_0(\Omega)$, $f\ge0$, and $f\ne0$, then $u\notin H^2(\Omega)$.}
Our notation $H^{2,2}_\mu(\Omega)$ follows \cite{BGhpCurv},  where, for curvilinear polygons, $\delta$ is replaced by the distance from the set of vertices.

\begin{figure}[t]
\centering
\includegraphics[height=50mm]{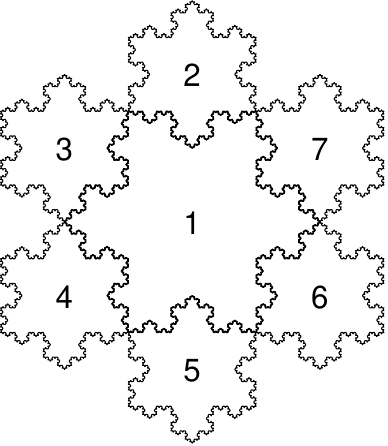}
\caption{The self-similar decomposition of the Koch snowflake into seven smaller snowflakes $s_1(\Omega),\ldots,s_7(\Omega)$ (labelled $1,\ldots,7$ in the figure), where $s_1,\ldots,s_7$ are defined in \eqref{e:smDef}.}
\label{fig:1}
\end{figure}

\section{Fractal meshes}
\label{s:Meshes}

We now consider the construction of geometry-conforming meshes of $\Omega$ comprising fractal elements. 

\subsection{Decomposition of the Koch snowflake}\label{ss:DecompOmega}
To define meshes on $\Omega$, we use the fact that $\overline{\Omega}$ is the attractor of an  IFS of contracting similarities satisfying the open set condition (see, e.g., \cite[\S5.4]{nondisjoint}),
 which implies that $\Omega$ has a decomposition into a finite union of similar copies of itself, as illustrated in Figure \ref{fig:1}. Explicitly,  
\begin{align}
\label{eqn:Decomp}
\overline{\Omega} = \bigcup_{m=1}^7 \overline{s_m(\Omega)},
\end{align}
where $s_1$ is a similarity with contraction factor $1/\sqrt{3}$ and $s_2,\ldots,s_7$ are similarities with contraction factor $1/3$, defined by 
 \begin{align}\nonumber
s_1(\bx)=\frac{1}{\sqrt{3}}R\bx, 
\qquad R =  \begin{pmatrix}
\frac{\sqrt{3}}{2} & -\frac{1}{2} \\
\frac{1}{2} & \frac{\sqrt{3}}{2}
\end{pmatrix} \qquad \text{(anticlockwise rotation by }\pi/6\text{)},\\
s_m(\bx)=\frac13\bx+\bx_m, \qquad 
\bx_m=\frac23\binom{\cos\alpha_{m}}{\sin\alpha_m},\qquad 
\alpha_m=\frac{(2m-1)\pi}6,\qquad m=2,\ldots,7.
\label{e:smDef}
\end{align}
This decomposition has the property that $s_m\OO \cap s_{m'}\OO=\emptyset$ for $m\neq m'$, and that, for each $m\neq 1$, the intersection $\partial(s_m\OO)\cap \partial(s_1\OO) = s_m(\partial\Omega)\cap s_1(\partial\Omega)$
 is similar to the Koch curve $\Gamma$.

\subsection{Decomposition of the Koch curve}\label{ss:DecompKoch}
We shall later use the fact that the Koch curve $\Gamma$ is also the attractor of an IFS satisfying the open set condition, and has a decomposition into a union of four similar copies of itself, with $\Gamma=\bigcup_{m=1}^4 t_m(\Gamma)$, where $t_1,\ldots,t_4$ are contractions with contraction factor $\frac13$. 
Explicitly, 
\begin{align} t_1(\bx) = \frac13 \bx, \quad t_2(\bx) = \frac13 R^2\bx + \begin{pmatrix}\frac13\\0 \end{pmatrix},
\quad  t_3(\bx) = \frac13 R^{-2}\bx + \begin{pmatrix}\frac12\\\frac1{2\sqrt3} \end{pmatrix},
\quad t_4(\bx) = \frac13 \bx + \begin{pmatrix}\frac23\\0 \end{pmatrix}, \label{e:tmDef}
\end{align}
for $\bx\in \R^2$, where $R$ is as defined above, so $R^2$ is the matrix representing an anticlockwise rotation by the angle $\frac\pi3$.

\subsection{Geometry-conforming meshes and fractal elements}\label{ss:Mesh}
Since each of the sets $s_1\OO,\ldots,s_7\OO$ is similar to $\Omega$, we can apply the above decomposition recursively to generate infinitely many different meshes of $\Omega$, each consisting of fractal elements similar to $\Omega$.
Any such mesh corresponds to a collection
\[\cT=\big\{K=s_{m_1}\circ \cdots \circ s_{m_{\ell(K)}}(\Omega): \ m_i\in\{1,\ldots,7\},\ i=1,\ldots,\ell(K)\big\},\]
\[ \text{such that }\quad\overline{\Omega}=\bigcup_{K\in \cT}\overline{K} \quad \text{and} \quad 
K\cap K'=\emptyset\ \text{ for } K,K'\in \cT, \, K\neq K',\]
where the number of contractions $\ell(K)$ involved in the definition of each element $K\in \cT$ is a positive integer that may differ from element to element.
For each $K\in\cT$, we define $h_K:=\diam {(K)}$.
From the contraction factors in \eqref{e:smDef}, each element has diameter $h_K=2/3^{j/2}$ for some $j\in\N$; if $\deK\cap\deO$ is neither empty nor a single point, then $K=s_{m_1}\circ \cdots \circ s_{m_{\ell}}(\Omega)$ with all $m_i\ne1$, so that $h_K=2/3^\ell$ for some $\ell\in\N$.

We say that two different elements $K_-,K_+\in \cT$ are \textit{adjacent} if $\overline{K_-}\cap\overline{K_+}$ is neither empty nor a single point. 
We restrict our attention to meshes that are \textit{locally quasi-uniform} (LQU), in the sense that, if $K_-,K_+\in \cT$ are adjacent, then $K_-\cup K_+ = S(s_1\OO\cup s_2\OO)$ for some similarity $S:\R^2\to\R^2$, see Figure~\ref{fig:Similarity+Ellipses}(a).
This implies that the ratio between the larger and smaller diameters of any two adjacent elements equals $\sqrt{3}$.

\begin{figure}[t]\centering
      (a)\includegraphics[width=90mm]{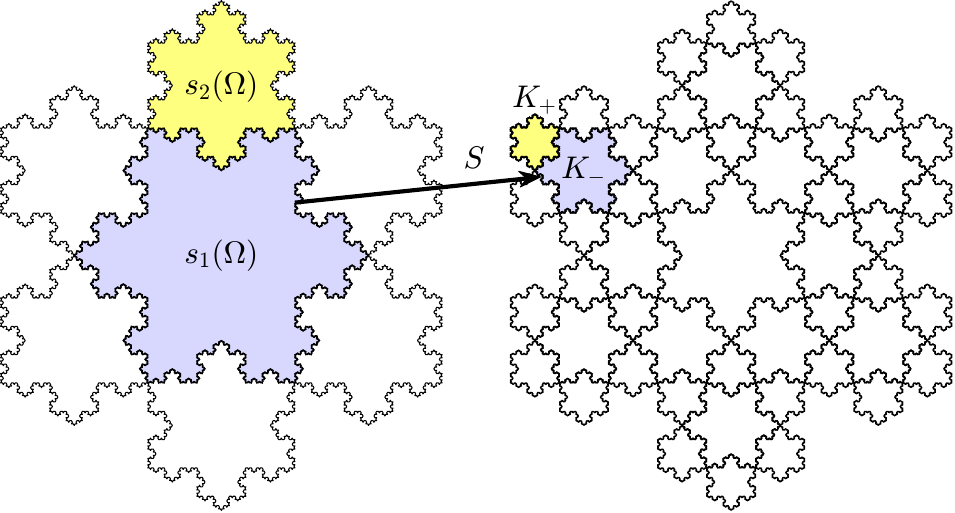} \qquad
(b)\includegraphics[width=50mm]{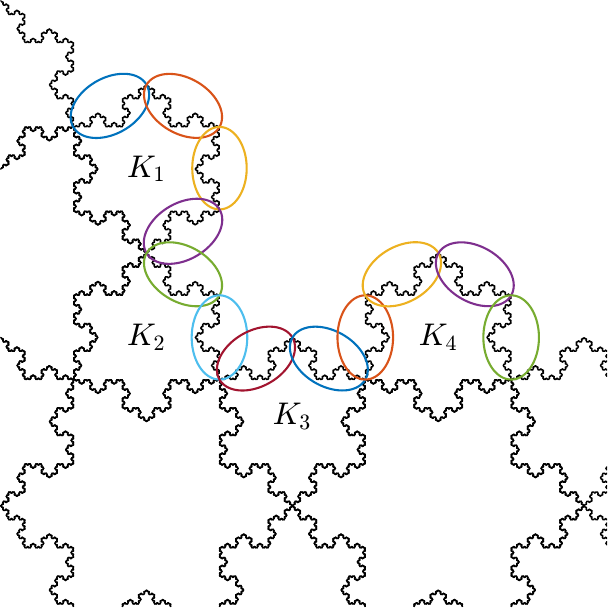}
\caption{(a) Each pair of adjacent elements $K_\pm$ is the image of $s_1\OO$ and $s_2\OO$ under a similarity~$S$.\\
(b) A zoom of the mesh in Figure~\ref{fig:MeshSeq}(b) with boundary faces highlighted: the elements $K_1$ and $K_4$ have four boundary faces each, while the elements $K_2$ and $K_3$ have two. For all boundary faces $F\subset\deK$, $h_F=h_K/2$.}
\label{fig:Similarity+Ellipses}
\end{figure}

\subsection{Faces}\label{ss:Faces}
We define the set of \textit{interior faces}
\[ \cF_I:=\big\{F=\deK_-\cap\deK_+: K_- \text{ and }K_+ \text{ are adjacent}\big\}.\]
We adopt the convention that, given a face $F=\deK_-\cap\deK_+\in\cF_I$, we choose the labels $\pm$ on $K_\pm$ so that $K_-$ denotes the larger and $K_+$ the smaller of the pair of adjacent elements associated with $F$, implying that $K_-=S(s_1(\Omega))$ and $K_+=S(s_2(\Omega))$, where $S$ is the similarity mentioned in \S\ref{ss:Mesh}, and $h_{K_-}/h_{K_+}=\sqrt{3}$.
Moreover, the face 
$F=\deK_-\cap\deK_+ = S(s_1(\partial{\Omega})\cap s_2(\partial \Omega))$ 
is similar to the Koch curve $\Gamma$, and is one third of $\partial K_+$ and one sixth of $\partial K_-$.
We define $h_F:=\diam F$, and note that
\begin{equation}\label{e:hFhK}
h_F=\frac12\,h_{K_-}=\frac{\sqrt3}2\,h_{K_+}. 
\end{equation}

We say that an element $K\in \cT$ is \textit{boundary-adjacent} if $\partial K\cap \partial\Omega$ is neither empty nor a single point. 
One can easily prove that, for every boundary-adjacent element $K$, $\partial K\cap \partial\Omega$ is the union (disjoint up to point intersections) of either 2 or 4 Koch curves of diameter $h_K/2$, i.e., 
 one sixth of $\partial K$. 
(This clearly holds for the original decomposition \eqref{eqn:Decomp}, and continues to hold when any boundary-adjacent element is refined.)
See Figure \ref{fig:MeshSeq}(b) for a mesh with 30 boundary-adjacent elements, of which 12 are of the first type and 18 are of the second type, and Figure~\ref{fig:Similarity+Ellipses}(b) for a zoom of the same mesh.
For each such $K$, we call these Koch curves \textit{boundary faces}, and we denote by $\cF_B$ the set of all boundary faces {of~$\cT$}.
For each~$F {\in \cF_B}$, we define $h_F:=\diam(F)$, and note that $h_F=h_K/2$.
Finally, $\cF:=\cF_I\cup\cF_B$ is the set of all interior and boundary faces of~$\cT$.

\subsection{Mesh sequences}\label{ss:MeshSeq}
We describe three families of meshes.
  
\paragraph{$\cT_\ell$ meshes.}
One sequence of LQU meshes is given by $\{\cT_\ell\}_{\ell\in\N}$, where, for $\ell\in\N$,
\[ \cT_\ell:=\big\{K=s_{m_1}\circ \cdots \circ s_{m_{\ell}}(\Omega): \;m_i\in\{1,\ldots,7\},\; i=1,\ldots,\ell\big\}.\]
The first three meshes in this family are shown in Figure~\ref{fig:MeshSeq}(a)--(c).
The mesh $\cT_1$ coincides with the decomposition \eqref{eqn:Decomp}, and, for $\ell\geq 2$, the mesh $\cT_\ell$ is obtained by refining each element in $\cT_{\ell-1}$ using \eqref{eqn:Decomp}, i.e.,\ replacing each element $K=s_{m_1}\circ \cdots \circ s_{m_{\ell-1}}(\Omega)\in \cT_{\ell-1}$ by the seven smaller elements
 $s_{m_1}\circ \cdots \circ s_{m_{\ell-1}}\circ s_j(\Omega)$ for~$j = 1, \ldots, 7$.
 As a consequence, $\cT_\ell$ is made of $7^\ell$ elements. 
That these $\cT_\ell$ meshes satisfy the LQU condition of \S\ref{ss:Mesh} can be proved by a simple induction argument, noting that the LQU condition holds for $\cT_1$ by inspection, and that it continues to hold when $\cT_{\ell-1}$ is refined to produce $\cT_{\ell}$.

\paragraph{$\cT_\ell'$ meshes.}
Another sequence of LQU  meshes can be obtained by modifying the above procedure so that, at each step,  only the elements with the largest diameter {are refined} (as opposed to all the elements being refined). We shall denote the resulting sequence of meshes by $\{\cT'_\ell\}_{\ell\in\N}$. 
Plots of the meshes  $\cT_\ell'$ for $\ell={2,3,4}$ are presented in Figure \ref{fig:MeshSeq}(d)--(f). 
Again, a simple induction argument proves that these $\cT_\ell'$ meshes satisfy the LQU condition of \S\ref{ss:Mesh}.

                \begin{figure}[htbp]\centering
(a) \raisebox{-\height}{\includegraphics[width=.28\textwidth]{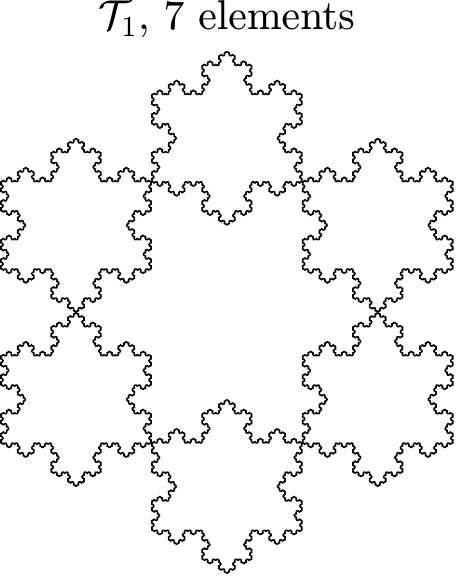}}
 (b)\raisebox{-\height}{\includegraphics[width=.28\textwidth]{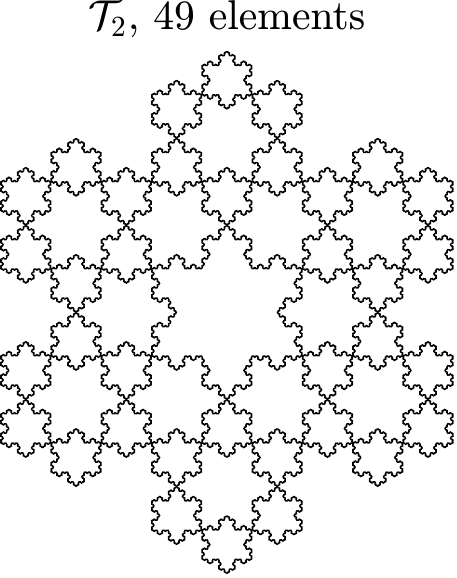}}
(c)\raisebox{-\height}{\includegraphics[width=.28\textwidth]{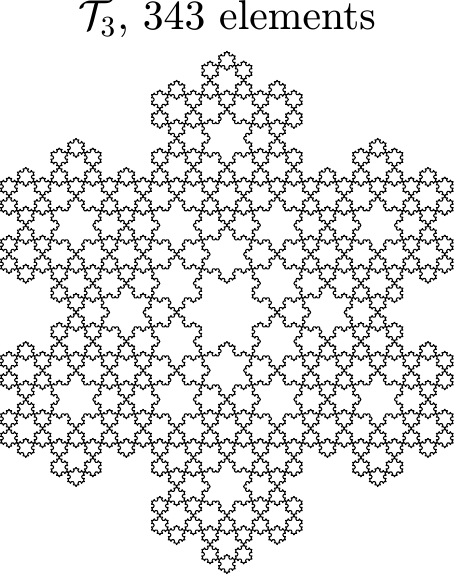}}
\\[1mm]\hrule\smallskip
(d)\raisebox{-\height}{\includegraphics[width=.28\textwidth]{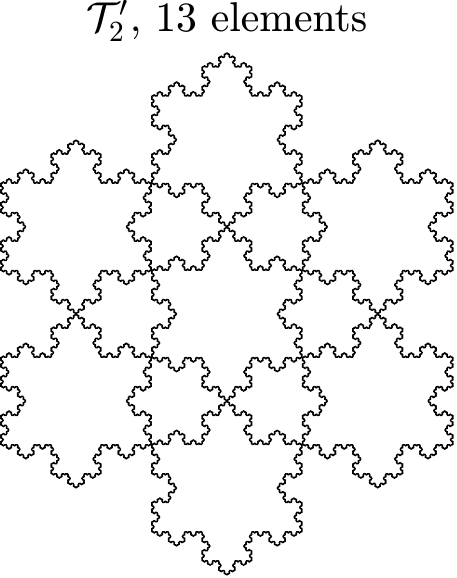}}
(e)\raisebox{-\height}{\includegraphics[width=.28\textwidth]{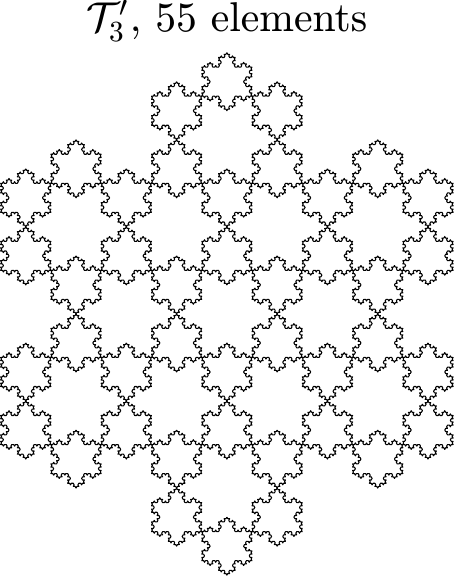}}
(f)\raisebox{-\height}{\includegraphics[width=.28\textwidth]{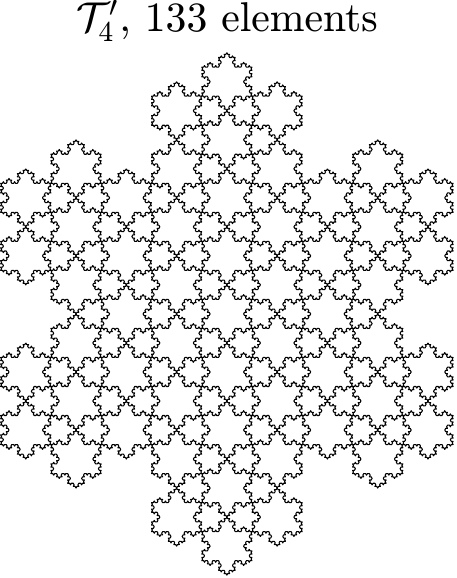}}
\\[1mm]\hrule\smallskip
(g)\raisebox{-\height}{\includegraphics[width=.28\textwidth]{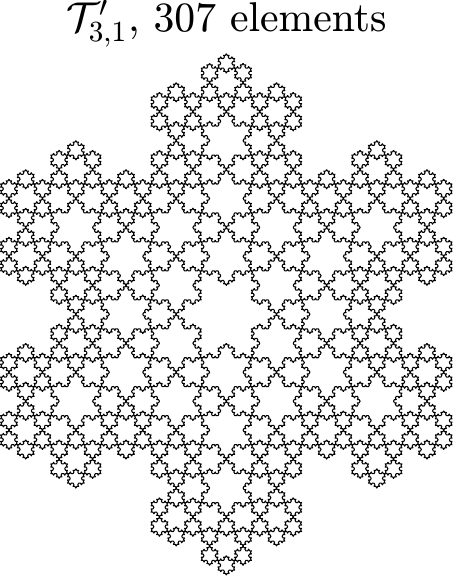}}
(h)\raisebox{-\height}{\includegraphics[width=.28\textwidth]{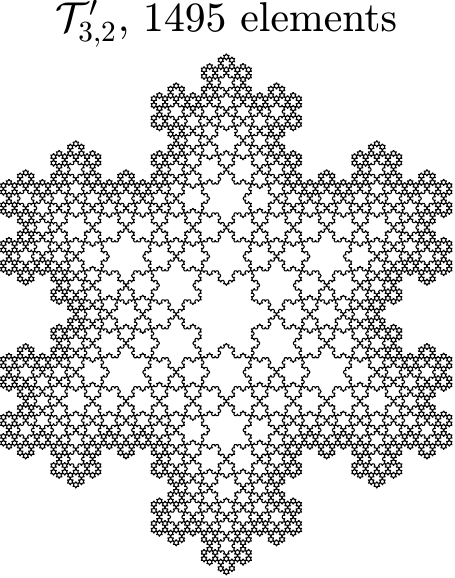}}
(i)\raisebox{-\height}{\includegraphics[width=.28\textwidth]{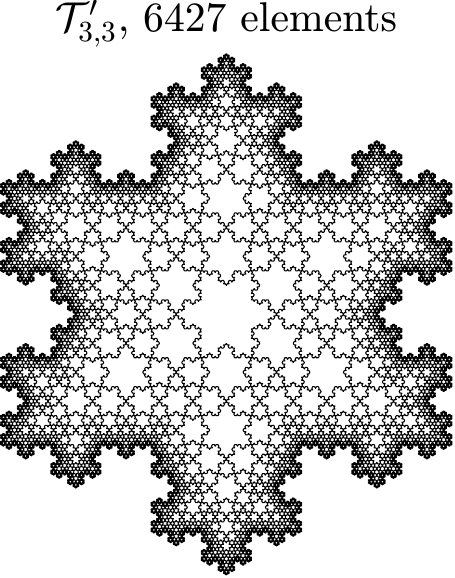}}
\caption{The meshes $\cT_\ell$ for $\ell=1,2,3$ (a--c), $\cT_\ell'$ for $\ell=2,3,4$ (d--f), and $\cT_{3,\ell^*}'$ for $\ell^*=1,2,3$ (g--i); see \S\ref{ss:MeshSeq}.}
\label{fig:MeshSeq}
\end{figure}

Note that the $\cT_\ell$ mesh includes elements of diameters $3^{-\ell}, 3^{-\ell+1/2}, 3^{-\ell+1},\ldots, 3^{-(\ell+1)/2}, 3^{-\ell/2}$ times $\diam(\Omega)$, and offers some local refinement towards $\partial\Omega$. By contrast, the mesh $\cT_\ell'$ contains elements of precisely two distinct diameters, namely $3^{-(\ell+1)/2}$ and $3^{-\ell/2}$ times $\diam(\Omega)$, and hence is globally quasi-uniform, with no refinement towards $\partial\Omega$.

\paragraph{$\cT'_{\ell,\ell'}$ meshes.}

One can construct meshes that are quasi-uniform away from $\partial\Omega$ but locally-refined near $\partial\Omega$ by taking any one of the $\cT_\ell'$ meshes and systematically refining the elements near the boundary. However, in order to preserve the LQU property it is not sufficient to refine only the elements that touch $\partial\Omega$ --- one also needs to refine some of the elements near to but not touching $\partial\Omega$.
This can be easily seen by consulting Figure \ref{fig:MeshSeq}(d) and noting that refining all the elements of the mesh $\cT_2'$ except the central element produces a mesh which does not satisfy the LQU condition. 

To define our refinement strategy, we introduce some notation relating to the distance of a mesh element to $\partial\Omega$. 
For a mesh element $K$, denote by $\bv_1^K,\ldots,\bv_6^K$ its six ``vertices'', i.e.,\ the points of $\deK$ such that $|\bv_1^K-\bv_4^K|=|\bv_2^K-\bv_5^K|=|\bv_3^K-\bv_6^K|=h_K$, and let $\widehat\delta_K:=\min_{j=1,\ldots,6}\dist(\bv_j^K,\deO)$. 
For example, $\widehat\delta_K=h_K/2$ for the central element in the meshes $\cT_2,\cT_2'$ of Figure~\ref{fig:MeshSeq}(b) and (d).

A family $\{\cT'_{\ell,\ell^*}\}_{\ell,\ell^*\in\N_0}$ of meshes refined towards the boundary of $\Omega$ can be defined as follows.
Let $\cT'_{\ell,0}=\cT'_\ell$ be as above.
Then, for $\ell^*\in\N$, $\cT'_{\ell,\ell^*}$ is constructed from $\cT'_{\ell,\ell^*-1}$ by refining all elements $K$ such that $\widehat\delta_K\le h_K/2$.
The meshes $\cT'_{3,1}$, $\cT'_{3,2}$, and $\cT'_{3,3}$ are shown in Figure~\ref{fig:MeshSeq}(g)--(i).
We are currently unable to prove that such meshes are always LQU, but we observe by inspection that they are so for the range of parameters $\ell,\ell^*$ that we consider in our numerical experiments in \S\ref{s:Numer}, which includes those shown in Figure~\ref{fig:MeshSeq}(g--i).

The diameter of all the boundary-adjacent elements for $\cT_\ell$ is $h_K=2/3^\ell$, for $\cT_\ell'$ is $h_K=2/3^{\lceil\ell/2\rceil}$, and for $\cT'_{\ell,\ell^*}$ is $h_K=2/3^{\lceil\ell/2\rceil+\ell^*}$ (since, in~$\cT'_{\ell, \ell^*}$, the boundary-adjacent elements of $\cT_\ell'$  have been refined $\ell^*$ times).
    Table~\ref{tab:Cardinalities} shows that, for a given diameter of the boundary-adjacent elements (where the solution $u$ of \eqref{e:BVP101} is expected to be singular and a fine discretisation is needed), the $\cT'_{\ell,\ell^*}$ meshes require much fewer elements than the $\cT_\ell$ and $\cT_\ell'$ meshes.

\begin{table}[htb]\centering
\begin{tabular}{|l|ll|ll|ll|ll|ll|}\hline
$h_K$ for boundary-adjacent $K$ 
& \multicolumn{10}{c|}{Number of elements in the mesh}\\\hline
  $ h_K=2/3^4$ & $\cT_4$ &2401 & $\cT_7'$ & 4039  & $\cT'_{2,3}$ & 1567 & $\cT'_{3,2}$ & 1495 & $\cT'_{4,2}$ &1861\\
$h_K=2/3^5$ & $\cT_5$ & 16807 & $\cT_9'$ & 35839   & $\cT'_{2,4}$ & 6499 & $\cT'_{3,3}$ & 6427 & $\cT'_{4,4}$ & 6793
\\\hline
\end{tabular}
\caption{The number of elements in different LQU meshes with the same diameter of the boundary-adjacent elements.}
\label{tab:Cardinalities}
\end{table}

\subsection{Piecewise polynomials}\label{ss:PPoly}
For $p\in\N$ and $K\in\cT$, we denote by $\IP^p(K)$ the space of polynomials of degree at most $p$ on the mesh element $K$, and let 
\begin{equation}\label{e:Vh}
V_h:=\IP^p(\cT ):=\big\{v\in L_2(\Omega): v|_K\in\IP^p(K)\ \forall K\in \cT\big\}=\prod_{K\in\cT }\IP^p(K).
\end{equation}

\subsection{Hausdorff measure, Lebesgue space, trace operator}\label{ss:Traces}
We denote by $\cH^d$ the $d$-dimensional Hausdorff measure, normalised in such a way that $\cH^d\GG=1$.
Then, for each mesh face $F\in\cF $, $\cH^d(F)=h_F^d$, by the scaling properties of $\cH^d$ and the fact that $F$ is similar to $\Gamma$. 
We write $\IL_2(F)$ for the Lebesgue space of functions on $F$ that are square-integrable with respect to $\cH^d$, normed by 
 \begin{equation*} \|v\|_{\IL_2(F)}^2
:=\int_F |v(\bx)|^2\,\rd\cH^d(\bx).
  \end{equation*}
For any $F\in\cF$ and~$v\in\IL_2(F)$, if $S:\Gamma\to F$ denotes the similarity from the reference Koch curve, then the 
standard scaling property of $\cH^d$ \cite[Scaling Property 3.2]{Fal} gives that 
 \begin{equation}\label{e:ScalingHausdorff}
 \|v\|_{\IL_2(F)}^2
 =\int_F |v(\bx)|^2\,\rd\cH^d(\bx)
 =h_F^d\int_\Gamma \big|v\big(S(\hat\bx)\big)\big|^2\,\rd\cH^d(\hat\bx)
 =h_F^d\|v\circ S\|_{\IL_2(\Gamma)}^2.
 \end{equation}
By \cite{Biegert:09,Hinz}, for all elements $K\in\cT $ and all faces $F\in\cF $ with $F\subset\deK$, the trace of $H^1(K)$ functions on $F$ is well-defined as an element of $\IL_2(F)$, and 
furthermore, using the scaling \eqref{e:ScalingHausdorff} and the fact that  $h_F\le \frac{\sqrt3}2h_K$ by \eqref{e:hFhK}, there is a constant $C_\Tr>0$ independent of $h_K$ such that
\begin{equation}\label{e:TraceIneq}
\frac{1}{h_K^{d/2}}\|w\|_{\IL_2(F)}\le C_{\Tr} \big(\|\nabla w\|_{L_2(K)} + h_K^{-1}\|w\|_{L_2(K)}\big)
\qquad \forall w\in H^1(K).
\end{equation}

 \section{Wedge representation of normal derivatives}
\label{s:Wedge}

For the DG formulation, it is necessary to define, on each face $F\in \cF$, generalisations of the integrals
\begin{align}
\label{e:dnInt}
\int_F\partial_{\bn} w\, v \ds
\end{align}
that appear in the classical SIP-DG-FEM formulation \eqref{aDef} on a triangulation of a polygonal domain. The difficulty here is in properly defining ``normal derivatives'' on the faces $F\in \cF$.
Since a normal vector cannot be defined on the Koch curve, our normal derivatives cannot be defined in the classical sense.
We consider instead normal derivatives defined weakly in terms of domain integrals, following e.g., \ \cite[\S4.2]{lancia2002transmission} and \cite[\S4]{hinz2023boundary}.
These works provide a way to define weak normal derivatives on the boundary of an $H^1$ extension domain, as elements of the dual space of the trace space on the domain boundary.
This approach can be applied, in particular, for our Koch snowflake domain $\Omega$ and, consequently, for every mesh element~$K \in \cT$.
However, in order to correctly generalise integrals of the form \eqref{e:dnInt} we require weak normal derivatives restricted to each face $F\in\cF$.  For this, we introduce ``wedges'' associated with each face, each of which has a boundary consisting of the union of the face $F$ and two straight-line segments sharing a vertex at the barycentre of the element, as illustrated in Figure~\ref{fig:wedgesB}(a).
By analogy with the divergence theorem,  
the normal derivative on $F$ can then be defined weakly as a domain integral over the appropriate wedge, minus integrals involving the (classical) normal derivatives over the straight-line segments.

For the definition of the proposed DG method in \S\ref{s:DG}, it suffices to define normal derivatives only for polynomials. Hence, for now, we restrict our attention to the polynomial case. 
Later, in the analysis of the method (in \S\ref{s:DGErr}), we will extend our definitions to suitable function spaces.

Let $F=\deK_-\cap\deK_+\in\cF_I$. 
Let $S^\pm_1$ and~$S^\pm_2$ be the closed line segments joining the endpoints of $F$ to the barycentre of $K_{\pm}$, as illustrated in Figure~\ref{fig:wedgesB}(a).
Define $\vee:=S^-_1\cup S^-_2$ and 
$\wedge:=S^+_1\cup S^+_2$. 
Then $F\cup \vee$ and~$F\cup \wedge$ form the boundaries of two open subsets of $K_-$ and $K_+$, respectively, which we call \textit{wedges} and denote by $\trd$ and $\tru$.
The set $\overline{\trd\cup\tru}$ is a Lipschitz quadrilateral, the interior and the boundary of which we denote by $\loz$ and $\lozenge$, respectively. 
Whenever we refer to a normal vector $\bn$ on $\lozenge=\partial\loz$, or a subset of it, we assume that $\bn$ is the outward-pointing normal.

\begin{figure}[htb]
\centering
(a)\includegraphics[height=70mm]{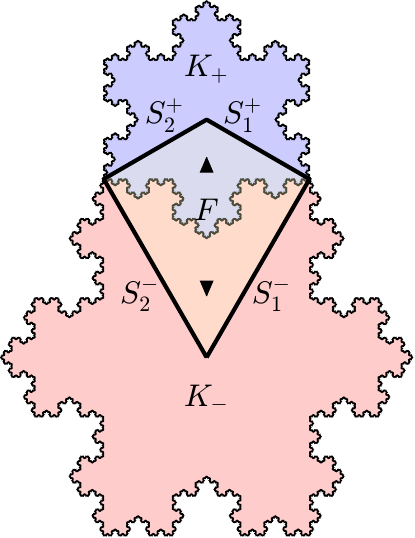}
\qquad\qquad(b)\;
\includegraphics[height=50mm]{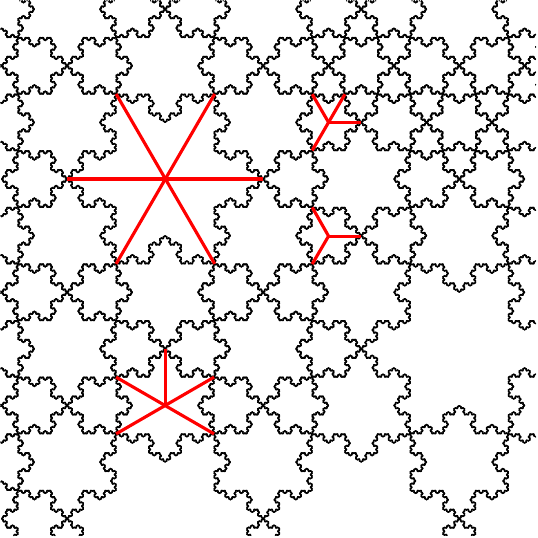}
\caption{(a) Elements $K_\pm$ and wedges $\trd,\tru$ associated with a face $F=\deK_-\cap\deK_+\in \cF_I$.
The quadrilateral $\loz$ is the interior of the set $\overline{\trd\cup\tru}$, and its boundary $\lozenge$ is the thick black line.\\
(b) Zoom of an LQU  mesh showing division of elements into 3, 4, 5, and 6 wedges.
The union of the (3, 4, 5, or 6) red segments contained in $K\in\cT$ is denoted $\ast_K$ in \S\ref{s:Continuity}.
}
\label{fig:wedgesB}
\end{figure}

 The angle between the segments $S_1^-$ and $S_2^-$ is $\pi/3$, so $\trd$ takes one sixth of $K_-$;
conversely, the angle between the segments $S_1^+$ and $S_2^+$ is $2\pi/3$, so $\tru$ takes one third of $K_+$.
Therefore, depending on the relative sizes of its adjacent elements, a given element $K$ can be split in 3, 4, 5, or 6 wedges, as illustrated in Figure~\ref{fig:wedgesB}(b).

Given two polynomials $w,v\in\IP^p(\R^2)$ and a face $F=\deK_-\cap\deK_+\in\cF_I$, we define the bilinear forms
\begin{equation}\label{e:IdownIup}
\cI_\trd(w,v):=\int_\trd\dive[\nabla w\, v]\dx  -\int_\vee\dn w\, v \ds ,\quad\qquad
\cI_\tru(w,v):=\int_\tru\dive[\nabla w\, v]\dx  -\int_\wedge\dn w\, v \ds .
\end{equation}
Clearly, $\cI_\trd(w,v)$ and $\cI_\tru(w,v)$ are well-defined because all integrands are polynomials, and the domains of integration are a bounded open subset of $\R^2$ (either $\trd$ or $\tru$) and the union of two segments (either $\vee$ or $\wedge$). 
We will extend the definition of $\cI_{\trd}(w,v)$ and $\cI_\tru(w,v)$ to sufficiently regular functions $w,v$ in \S\ref{s:DGErr}.

 The motivation for the definition of $\cI_\trd$ and $\cI_\tru$ is the following: if $F$ were a Lipschitz curve, then, 
by the divergence theorem, we would have $\cI_\trd(w,v)=\int_F\partial_{\bn_\trd} w\, v \ds $ and
$\cI_\tru(w,v)=\int_F\partial_{\bn_\tru} w\, v \ds $, with $\bn_\trd$ and $\bn_\tru$ pointing outwards from $\trd$ and $\tru$, respectively.
 Thus $\cI_\trd(w,v)$ and $\cI_\tru(w,v)$ are proxies for the duality products (the integrals of the product) of the Neumann trace of $w$ and the Dirichlet trace of $v$ on the nowhere-differentiable curve $F$.    
By the divergence theorem, the following identity holds:
\begin{equation}\label{e:II0poly}
\cI_\trd(w, v)+\cI_\tru(w,v)
=\int_\loz \dive[\nabla w\, v] \dx  - \int_\lozenge \dn w\, v \ds  =0
\qquad \forall w,v\in 
\IP^p(\R^2),
\end{equation} 
with $\bn$ pointing outwards from $\loz$, which is a proxy for the fact that the Dirichlet and Neumann traces of globally smooth functions coincide when taken from opposite sides of the same face.

For any polynomial $w \in\IP^p(\R^2)$ and any face $F\in\cF $, $\|w\|_{\IL_2(F)}=0$ if and only if $w=0$, since non-trivial polynomials in $\R^2$ cannot vanish $\cH^d$-almost everywhere on a set of Hausdorff dimension $d>1$, see \cite[Prop.~2]{Mityagin20}.
Thus, since $\IP^p(\R^2)$ is finite-dimensional, any norm on $\IP^p(\R^2)$ is equivalent to $\|\cdot\|_{\IL_2(F)}$. 
Similarly, any seminorm on $\IP^p(\R^2)$ whose kernel is the space of constants is equivalent to $\|\nabla(\cdot)\|_{L_2(\trd)}$. 
             This, together with the Cauchy--Schwarz inequality and the norm scaling \eqref{e:ScalingHausdorff},
 gives the existence of a constant $C_p>0$, depending only on the polynomial degree $p$, such that
      \begin{subequations}\label{e:DiscreteBoundOnI}
\begin{align}
&|\cI_\trd(w,v)|\le \|\nabla w\|_{L_2(\trd)} \|\nabla v\|_{L_2(\trd)} + h_F \|\Delta w\|_{L_2(\trd)} h_F^{-1}\|v\|_{L_2(\trd)}
+h_F^{1/2}\|\dn w\|_{L_2(\vee)} h_F^{-1/2}\|v\|_{L_2(\vee)}
\nonumber\\
\le&\Big(\|\nabla w\|_{L_2(\trd)}^2+h_F^2 \|\Delta w\|_{L_2(\trd)}^2 + h_F\|\dn w\|_{L_2(\vee)}^2\Big)^{1/2}
\Big(\|\nabla v\|_{L_2(\trd)}^2 +  h_F^{-2}\|v\|_{L_2(\trd)}^2+ h_F^{-1}\|v\|_{L_2(\vee)}^2\Big)^{1/2}
\nonumber\\
   \le& C_p\|\nabla w\|_{L_2(\trd)}\ h_F^{-d/2}\|v\|_{\IL_2(F)}
   \qquad\qquad \forall w,v\in\IP^p(\R^2),  \label{e:DiscreteBoundOnI1}
\end{align}
and, similarly,
\begin{align}
|\cI_\tru(w,v)|
\le C_p\|\nabla w\|_{L_2(\tru)}\ h_F^{-d/2}\|v\|_{\IL_2(F)}
\qquad\qquad \forall w,v\in\IP^p(\R^2). 
\label{e:DiscreteBoundOnI2}
\end{align}
\end{subequations}

Analogously, for all boundary faces $F\in\cF_B$, if $K\in \cT$ is the element such that $F\subset\deK$, we define $S^-_1,S^-_2$ to be the closed segments connecting the endpoints of $F$ to the barycentre of $K$, $\vee:=S^-_1\cup S^-_2$, and the wedge $\trd$ to be the bounded open set with boundary $\vee\cup F$ (see Figure~\ref{fig:BdryTria} below). 
We then define $\cI_\trd(w,v)$ as in \eqref{e:IdownIup}.

The wedges associated  with the mesh faces are a partition of $\Omega$.
In particular, the following identity holds (with all unions disjoint):
\begin{equation}\label{e:WedgePartition}
\overline\Omega=
\overline{\bigg(\bigcup_{F\in\cF_I\cup\cF_B}\trd \bigg) \cup\bigg(\bigcup_{F\in\cF_I}\tru\bigg)}.
 \end{equation}

\section{The SIP-DG method}
 \label{s:DG}

We are now ready to define our SIP-DG-FEM on an LQU mesh $\cT_h$ of the fractal domain $\Omega$. 

Here, and henceforth, for each internal mesh face~$F = \deK_- \cap \deK_+$ and any function~$\vh$ in the piecewise-polynomial space~$\Vh$ defined in~\eqref{e:Vh}, we denote by $v_h^\pm$ the global polynomials in $\IP^p(\R^2)$ such that $v_h|_{K_\pm} =v_h^\pm|_{K_\pm}$.
We also recall our convention that $\trd\subset K_-$, $\tru\subset K_+$ (Figure~\ref{fig:wedgesB}(a)).

Let $\eta$ be a positive penalty parameter.
Our SIP-DG-FEM bilinear and linear forms are defined for $w_h,v_h\in V_h$ by
\begin{align}
\label{e:aDefDisc}
\aDG(w_h,v_h):=&\sum_{K\in\cT }\int_K \nabla w_h\cdot\nabla v_h \dx \\
&+\sum_{F\in\cF_I} \bigg[ -\frac12\Big{(}  \cI_\trd(w_h^-,v_h^--v_h^+)
  -\cI_\tru(w_h^+,v_h^--v_h^+) &&\bigg\}\text{consistency}\notag\\
&\hspace{20mm}
+\cI_\trd(v_h^-,w_h^--w_h^+)-\cI_\tru(v_h^+,w_h^--w_h^+) \Big)  \quad&&\bigg\}\text{symmetry}\notag\\
&
 \qquad \qquad+\frac\eta{h_F^d}\int_F(w_h^--w_h^+)(v_h^--v_h^+)\rd\cH^d \bigg] &&\bigg\}\text{penalty}\notag\\
&+\sum_{F\in\cF_B} \bigg[-\cI_\trd(w_h,v_h)-\cI_\trd(v_h,w_h)+\frac\eta{h_F^d}\int_F w_h\,v_h\,\rd\cH^d\bigg],&&\bigg\}\text{boundary}
\notag\\
\cL(v_h):=&\int_\Omega fv_h \dx .
\notag
\end{align}
Here we recall from \eqref{e:IdownIup} that, for a given internal face $F\in\cF_I$, the term $\cI_\trd(w_h^-,v_h^--v_h^+)$ appearing in \eqref{e:aDefDisc} involves integrals 
$\int_{\trd}\dive[\nabla w_h^-\, (v_h^--v_h^+)]\dx$ and $\int_\vee\dn w_h^-\, (v_h^--v_h^+) \ds$ over the open set $\trd$ and the union of line segments $\vee$, both of which are subsets of $K_-$. 
In accordance with the notation introduced just before \eqref{e:aDefDisc}, in these integrals, $w_h^-$ and $v_h^-$ are just the restrictions of $w_h$ and $v_h$ to $K_-$, while $v_h^+$ is the extension to $K_-$ of the restriction of $v_h$ to the neighbouring element $K_+$. Similar comments apply to the terms $\cI_\tru(w_h^+,v_h^--v_h^+)$, $\cI_\trd(v_h^-,w_h^--w_h^+)$, and $\cI_\tru(v_h^+,w_h^--w_h^+)$.

Our SIP-DG-FEM is then defined by the following discrete variational problem:
\begin{equation}\label{e:DG}
\text{find }u\DG\in V_h \quad \text{s.t.}\quad \aDG(u\DG,v_h)=\cL(v_h) \quad \forall v_h\in V_h.
\end{equation}

Comparing \eqref{e:aDefDisc}--\eqref{e:DG} with the classical SIP-DG-FEM formulation \eqref{aDef}, the ``consistency'' and the ``symmetry'' terms in $\aDG(\cdot,\cdot)$ are proxies for the standard jump and average terms in \eqref{aDef}.
In particular, $\cI_\trd(w_h^-,v_h^-)-\cI_\trd(w_h^-,v_h^+)$ is a proxy for $\int_F\dn w_h^-(v_h^--v_h^+)\ds $, so the ``consistency'' term in $\aDG(\cdot,\cdot)$ is a proxy for $\int_F\mvl{\nabla w_h} \cdot \jmp{v_h} \ds $ in the notation of \cite[p.~19]{cangiani2017hp}, i.e.,\ the product of the average of the gradient of the trial field $w_h$ and the normal jump of the Dirichlet trace of the test field $v_h$. 
The ``symmetry'' term in $\aDG(\cdot,\cdot)$ can be interpreted analogously, with the roles of $w_h$ and $v_h$ swapped.
The ``penalty'' term in $\aDG(\cdot,\cdot)$, which is a proxy for the corresponding term in \eqref{aDef}, does not require the generalised integrals $\cI_\trd,\cI_\tru$ because it involves only Dirichlet traces, which are well-defined by \S\ref{ss:Traces}.

Details of the practical implementation of \eqref{e:DG} will be discussed in \S\ref{s:Quad}.
We first present results relating to the analysis of the method, beginning with discrete coercivity.

\subsection{Discrete coercivity}
\label{s:Coercivity}
For the analysis of the method, we define the following DG norm on $V_h$: 
\begin{align*}
\|v\|\DG^2:=&\sum_{K\in\cT } \|\nabla v\|_{L_2(K)}^2
+\sum_{F\in\cF_I}\frac{1}{h_F^d}\int_F(v^--v^+)^2\rd\cH^d
+\sum_{F\in\cF_B}\frac{1}{h_F^d}\int_F v^2\rd\cH^d \qquad \forall v\in V_h.
\end{align*}
This is a norm on $V_h$ because, if
$\|v\|\DG=0$, then $v$ is constant on each element, with value zero on $\deO$ and vanishing jumps on all internal faces.

We then have the following discrete coercivity and well-posedness result.

\begin{thm}[Coercivity of~$\aDG$ and well-posedness of the SIP-DG-FEM]\label{thm:Coercivity} \ \
Let $\cT_h$ be an LQU mesh of $\Omega$, let $p \in \N$, and let $\Vh$ be defined as in \eqref{e:Vh}.
If the penalty parameter $\eta$ in \eqref{e:aDefDisc} satisfies
\begin{equation}\label{e:Eta}
\eta\ge \frac12+ 2C_p^2,
\end{equation}
with $C_p$ the constant from  \eqref{e:DiscreteBoundOnI}, then the bilinear form $\aDG(\cdot,\cdot)$ satisfies the coercivity inequality
\begin{equation}\label{e:Coercivity}
\aDG(w_h,w_h)\ge \frac12\|w_h\|\DG^2 \qquad \forall w_h\in V_h,
\end{equation}
and the SIP-DG-FEM scheme \eqref{e:DG} is well-posed for all~$f \in L_2(\Omega)$.
Consequently, given any choice of basis for $V_h$, the resulting Galerkin matrix is symmetric and positive definite.
\end{thm}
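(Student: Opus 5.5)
Setting $v_h=w_h$ in \eqref{e:aDefDisc} gives
\[
\aDG(w_h,w_h)=\sum_{K\in\cT}\|\nabla w_h\|_{L_2(K)}^2
+\sum_{F\in\cF}\frac{\eta}{h_F^d}\|[w_h]_F\|_{\IL_2(F)}^2
-\sum_{F\in\cF_I}\Big(\cI_\trd(w_h^-,[w_h]_F)-\cI_\tru(w_h^+,[w_h]_F)\Big)
-2\sum_{F\in\cF_B}\cI_\trd(w_h,w_h).
\]
Here $[w_h]_F:=w_h^--w_h^+$ on interior faces and $[w_h]_F:=w_h$ on boundary faces. The consistency and symmetry contributions coincide on the diagonal, which is why the factor $\frac12$ disappears on interior faces and a factor $2$ appears on boundary faces. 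The plan is to bound the indefinite $\cI$-terms by the gradient term and the penalty term, and then absorb them.

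\textbf{Interior faces.} Apply \eqref{e:DiscreteBoundOnI1} with $w=w_h^-$ and $v=[w_h]_F\in\IP^p(\R^2)$, and \eqref{e:DiscreteBoundOnI2} with $w=w_h^+$. The sum of the two terms is then at most
\[
C_p\big(\|\nabla w_h\|_{L_2(\trd)}+\|\nabla w_h\|_{L_2(\tru)}\big)\,h_F^{-d/2}\|[w_h]_F\|_{\IL_2(F)}.
\]
For each face we use Young's inequality $ab\le \frac{\epsilon}{2}a^2+\frac{1}{2\epsilon}b^2$. Together with $(a+b)^2\le 2a^2+2b^2$, this gives the bound
\[
\tfrac12\big(\|\nabla w_h\|_{L_2(\trd)}^2+\|\nabla w_h\|_{L_2(\tru)}^2\big)+C_p^2\,h_F^{-d}\|[w_h]_F\|_{\IL_2(F)}^2 .
\]
\textbf{Boundary faces.} Here $2|\cI_\trd(w_h,w_h)|\le 2C_p\|\nabla w_h\|_{L_2(\trd)}h_F^{-d/2}\|w_h\|_{\IL_2(F)}$, which Young's inequality bounds by $\frac12\|\nabla w_h\|^2_{L_2(\trd)}+2C_p^2h_F^{-d}\|w_h\|^2_{\IL_2(F)}$.

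\textbf{Summation.} This is the key geometric point: by the wedge partition \eqref{e:WedgePartition}, every point of $\Omega$ belongs to exactly one wedge. Summing the wedge gradient terms over all faces therefore gives exactly $\frac12\sum_K\|\nabla w_h\|_{L_2(K)}^2$, with no overlap constant. Hence
\[
\aDG(w_h,w_h)\ge \tfrac12\sum_K\|\nabla w_h\|_{L_2(K)}^2+(\eta-2C_p^2)\sum_{F}h_F^{-d}\|[w_h]_F\|_{\IL_2(F)}^2,
\]
and \eqref{e:Eta} makes $\eta-2C_p^2\ge\frac12$. This yields \eqref{e:Coercivity}. The main care needed is this bookkeeping of Young constants so that the wedges tile each element once; the bounds \eqref{e:DiscreteBoundOnI} already carry the scaling, so $C_p$ is mesh-independent. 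That independence relies on the similarity of all faces and wedges to a reference configuration, which holds for LQU meshes.

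\textbf{Conclusions.} Since $\|\cdot\|\DG$ is a norm on the finite-dimensional space $V_h$, coercivity gives injectivity of the square linear system, so \eqref{e:DG} is uniquely solvable for every $f\in L_2(\Omega)$; $\cL$ is clearly a bounded functional on $V_h$. The form $\aDG$ is symmetric by construction, since swapping $w_h$ and $v_h$ exchanges the consistency and symmetry terms. Thus for any basis $\{\phi_i\}$ of $V_h$, the Galerkin matrix $A_{ij}=\aDG(\phi_j,\phi_i)$ is symmetric, and $\mathbf{x}^\top A\mathbf{x}=\aDG(w_h,w_h)\ge\frac12\|w_h\|\DG^2>0$ for $w_h=\sum x_j\phi_j\ne0$, so $A$ is positive definite.
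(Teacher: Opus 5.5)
Your proof is correct and follows the paper's argument step for step. You expand $\aDG(w_h,w_h)$ and bound the wedge terms with the inverse inequality \eqref{e:DiscreteBoundOnI}. You apply the weighted Young inequality with $\varepsilon=\frac12$, which gives the same coefficients $\eta-C_p^2$ on interior faces and $\eta-2C_p^2$ on boundary faces. You then use the wedge partition \eqref{e:WedgePartition} to recombine the gradient terms into $\frac12\sum_K\|\nabla w_h\|_{L_2(K)}^2$. Your explicit justification of the symmetric positive definite Galerkin matrix, and of why $C_p$ is mesh-independent (by similarity), spells out points the paper leaves implicit.
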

\begin{proof}
Using the definition \eqref{e:aDefDisc} of the bilinear form $\aDG(\cdot,\cdot)$, the bilinearity of $\cI_\trd$ and $\cI_\tru$, the continuity identity \eqref{e:II0poly}, the inverse inequality \eqref{e:DiscreteBoundOnI}, and the weighted Young inequality ($2ab\leq \eps a^2 + (1/\eps)b^2$ for $a,b\in\R$ and $\eps>0$), it follows that, for all piecewise polynomials $w_h\in V_h$ and arbitrary $\eps>0$,
\begin{align*}
\aDG(w_h,w_h)
   & =\sum_{K\in\cT }\|\nabla w_h\|_{L_2(K)}^2
+\sum_{F\in\cF_I}\frac\eta{h_F^d}\|w_h^--w_h^+\|_{\IL_2(F)}^2
+\sum_{F\in\cF_B} \frac\eta{h_F^d}\|w_h\|_{\IL_2(F)}^2\\
&
\quad\  +\sum_{F\in\cF_I}\Big[ -\cI_\trd(w_h^-,w_h^--w_h^+)+\cI_\tru(w_h^+,w_h^--w_h^+)\Big]
-2\sum_{F\in\cF_B} \cI_\trd(w_h,w_h)
\\
& \overset{\eqref{e:DiscreteBoundOnI}}\ge
\sum_{K\in\cT }\|\nabla w_h\|_{L_2(K)}^2
+\sum_{F\in\cF_I}\frac\eta{h_F^d}\|w_h^--w_h^+\|_{\IL_2(F)}^2
+\sum_{F\in\cF_B} \frac\eta{h_F^d}\|w_h\|_{\IL_2(F)}^2\\
& \quad\
-C_p\sum_{F\in\cF_I} 
\big(\|\nabla w_h\|_{L_2(\trd)}+\|\nabla w_h\|_{L_2(\tru)}\big) 
h_F^{-d/2}\|w_h^+-w_h^-\|_{\IL_2(F)}\\
& \quad\ -2C_p\sum_{F\in\cF_B} \|\nabla w_h\|_{L_2(\trd)} h_F^{-d/2}\|w_h\|_{\IL_2(F)}
\\
& \ge\;\;
\sum_{K\in\cT }(1-{\varepsilon})\|\nabla w_h\|_{L_2(K)}^2
+\sum_{F\in\cF_I}\frac{\eta-\frac12C_p^2{\varepsilon^{-1}}}{h_F^d}\|w_h^--w_h^+\|_{\IL_2(F)}^2 
\\& \quad\ 
+\sum_{F\in\cF_B} \frac{\eta-C_p^2{\varepsilon^{-1}}}{h_F^d}\|w_h\|_{\IL_2(F)}^2,
       \end{align*}
where, in the last step, we used the partition \eqref{e:WedgePartition} of $\Omega$ in wedges.
  Choosing ${\varepsilon} =\frac12$, and assuming 
\eqref{e:Eta}, 
we obtain \eqref{e:Coercivity}.
\end{proof}

\section{SIP-DG-FEM error analysis}
 \label{s:DGErr}

For the error analysis of our method, we follow the framework of \cite[\S1.3]{di2011mathematical}. 
 In Theorem~\ref{thm:Coercivity}, we have already proven that the bilinear form~$\aDG(\cdot, \cdot)$ introduced in~\eqref{aDef} is coercive. 
Our aim is now to prove quasi-optimality of the method.

Recalling the weighted Sobolev space in~\eqref{e:H22mu}, given~$\mu \in [0, 1)$, we define\footnote{Note that we could take instead $V^\mu\cap H^1_0(\Omega)$, in analogy to \cite[Ass.~4.4]{di2011mathematical}, with no changes in the subsequent analysis.}
    \begin{align}\label{e:V}
V^\mu   := \big\{w\in H^{2,2}_\mu(\Omega)\ : \   \Delta w\in L_2(\Omega)\big\}.
\end{align}
 For~$0 \le \mu_- < \mu_+ < 1$, the inclusion~$V^{\mu_-}\subset V^{\mu_+}$ holds; moreover, $V^0 = H^2\OO$.
For all open $D$ compactly contained in $\Omega$ and all $w\in V^\mu$, $w|_D\in H^2(D)\subset C^0(D)$ by \eqref{e:H22mu} and the embedding \cite[eq.~(1,4,4,6)]{Grisvard85}, thus $V^\mu\subset C^0(\Omega)$.

In order to prove quasi-optimality,  
 which we do in Theorem \ref{thm:QO} below, it remains to extend the definition of  $\aDG(\cdot,\cdot)$ to  $(V^\mu+V_h)\times V_h$ (\S\ref{s:aDGDef}), 
show that it is continuous with respect to suitable norms (\S\ref{s:Continuity}), 
and prove the consistency of the discrete variational problem~\eqref{e:DG}, namely that
\begin{align}
\label{e:Cons}
\aDG(u,v_h)=\cL(v_h) \qquad \forall v_h\in V_h,
\end{align}
where $u$ is the solution of \eqref{e:BVP101} (\S\ref{s:Consistency}).
 
\subsection{Defining the bilinear form}
 \label{s:aDGDef}

For $w\in V^\mu$ and $\mu\in[0,1)$, Lemma~\ref{lem:dnu2} in Appendix \ref{app:A} ensures that, for any $F\in \cF_I$, the trace of $w$ on $\vee$ lies in $L_q(\vee)$ for all $q\in [1,\infty]$, and the normal derivative trace of $w$ lies in $L_q(\vee)$ for all
\begin{equation}\label{e:qmu}
q\in [1,1/\mu) \quad \text{if} \quad 1/2\leq \mu<1, \qquad \text{and} \qquad 
q\in [1,2] \quad \text{if} \quad 0\leq \mu< 1/2.
\end{equation}
    Hence $\cI_\trd(w,v)$ and $\cI_\tru(w,v)$ are well-defined by \eqref{e:IdownIup} whenever $w,v\in V^\mu+V_h$. 

To define $\aDG(\cdot,\cdot)$ on $(V^\mu+V_h)\times V_h$, we use a simple property of $V^\mu+V_h$.
\begin{lem}\label{lem:Jumps}
Let $\mu\in[0,1)$, $w=\widetilde w+w_h\in V^\mu+V_h$ and let $K_\pm$  be two elements sharing a face $F\in\cF_I$.
The jump $w_h^--w_h^+$ of the polynomial part of $w$ is independent of the choice of decomposition $w=\widetilde w+w_h$.
\end{lem}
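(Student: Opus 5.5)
Take two decompositions $w=\widetilde w_1+w_{h,1}=\widetilde w_2+w_{h,2}$ with $\widetilde w_i\in V^\mu$ and $w_{h,i}\in V_h$, and set $z_h:=w_{h,1}-w_{h,2}=\widetilde w_2-\widetilde w_1$. Then $z_h\in V_h\cap V^\mu$, and the claim reduces to showing that $z_h^-=z_h^+$ as polynomials in $\IP^p(\R^2)$. The extensions $z_h^\pm$ are the global polynomials that agree with $z_h$ on $K_\pm$. Once this is shown, linearity gives $w_{h,1}^--w_{h,1}^+=w_{h,2}^--w_{h,2}^+$, which is the statement.

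The key ingredient is the continuity of $V^\mu$ functions inside $\Omega$. As observed after \eqref{e:V}, every element of $V^\mu$ lies in $H^2(D)\subset C^0(D)$ for every open $D$ compactly contained in $\Omega$, so $V^\mu\subset C^0(\Omega)$. Because $z_h$ belongs to $V^\mu$, it coincides almost everywhere with a function continuous on $\Omega$. On $K_-$ it equals the polynomial $z_h^-$, and on $K_+$ it equals $z_h^+$. Now take a point $\bx_0\in F\cap\Omega$ and a small ball $B\subset\Omega$ around it, so that $B$ meets both $K_-$ and $K_+$ in sets of positive measure. On $B$ the continuous representative agrees with $z_h^\pm$ a.e.\ on $B\cap K_\pm$, and both of these are continuous, so $z_h^-=z_h^+$ on $F\cap B$, using density of $K_\pm$ near points of $F$.

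Deducing equality of polynomials from agreement on part of $F$ requires that a nonzero polynomial cannot vanish on a piece of a Koch curve. The relevant set $F\cap B$ contains a similar copy of $\Gamma$, since $F$ is self-similar and any open neighbourhood of an interior point of $F$ contains a small subcurve. This copy has Hausdorff dimension $d>1$. By \cite[Prop.~2]{Mityagin20}, as quoted in \S\ref{s:Wedge}, the polynomial $z_h^--z_h^+$ then vanishes identically.

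A cleaner alternative avoids the pointwise argument. Since $z_h\in H^1$ near $F$, its one-sided traces from $K_-$ and $K_+$ on $F$ coincide $\cH^d$-a.e.; this is the trace theory of \cite{Biegert:09,Hinz}, applied for example on the Lipschitz quadrilateral $\loz$. These traces are exactly $z_h^\pm|_F$. Hence $\|z_h^--z_h^+\|_{\IL_2(F)}=0$, and \cite[Prop.~2]{Mityagin20} again gives $z_h^-=z_h^+$. The main delicate point in either route is justifying that the one-sided limits or traces of the $V^\mu$ function on $F$ agree. I would use the continuity argument, which needs only $V^\mu\subset C^0(\Omega)$ and the fact that $F\setminus\partial\Omega$ contains a subcurve similar to $\Gamma$ lying in $\Omega$.
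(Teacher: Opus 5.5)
Your proof is correct and follows the paper's argument. In both, the difference of the two polynomial parts lies in $V^\mu\cap V_h\subset C^0(\Omega)$, so the one-sided polynomials agree on $F$ away from its at most isolated endpoints on $\partial\Omega$, and Mityagin's result on polynomials vanishing on a set of Hausdorff dimension $d>1$ makes them coincide. You spell out the continuity and density step in more detail than the paper does, and the trace-based alternative is not needed.
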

\begin{proof}
Let $w=\widetilde w_1+w_{1,h}=\widetilde w_2+w_{2,h}$ be two decompositions of $w$.
Then $\widetilde w_1-\widetilde w_2=w_{2,h}-w_{1,h}\in V^\mu\cap V_h$. 
Since $V^\mu\subset C^0(\Omega)$, $\widetilde w_1-\widetilde w_2$ has a uniquely-defined trace on $F$ (except, possibly, at the isolated points $F\cap\deO$).
Thus $w_{2,h}^--w_{1,h}^-=w_{2,h}^+-w_{1,h}^+$, i.e.,\ $w_{1,h}^--w_{1,h}^+=w_{2,h}^--w_{2,h}^+$ on $F$.
By our observation after \eqref{e:II0poly} that if the trace of a polynomial $(w_{1,h}^--w_{1,h}^+)-(w_{2,h}^--w_{2,h}^+)\in\IP^p(\R^2)$ on a set of Hausdorff dimension $d$ vanishes, then the polynomial itself vanishes. This
implies that the jumps of the two discrete functions coincide as polynomials in $\IP^p(\R^2)$. 
 \end{proof}

Given $w=\widetilde{w}+w_h\in V^\mu+V_h$ and $v_h\in V_h$, we define 
\begin{align}
\label{e:aDefGen}
\aDG(w,v_h):=&\sum_{K\in\cT }\int_K \nabla w\cdot\nabla v_h \dx  \\\notag
&+\sum_{F\in\cF_I}{\bigg[} -\frac12 {\Big(}  \cI_\trd(w,v_h^--v_h^+)-\cI_\tru(w,v_h^--v_h^+) &&\bigg\}\text{consistency}\\\notag
&\hspace{20mm}
+\cI_\trd(v_h^-,w_h^--w_h^+)-\cI_\tru(v_h^+,w_h^--w_h^+) {\Big)} \quad&&\bigg\}\text{symmetry}\\\notag
&
 \qquad \qquad+\frac\eta{h_F^d}\int_F(w_h^--w_h^+)(v_h^--v_h^+)\rd\cH^d {\bigg]} &&\bigg\}\text{penalty}\\\notag
&+\sum_{F\in\cF_B} \bigg[-\cI_\trd(w,v_h)-\cI_\trd(v_h,w)+\frac\eta{h_F^d}\int_F w\,v_h\,\rd\cH^d\bigg].&&\bigg\}\text{boundary}
 \end{align}
Note that the definition of $\aDG(\cdot,\cdot)$ does not depend on the choice of decomposition $w=\widetilde{w}+w_h$, thanks to Lemma~\ref{lem:Jumps}. Note also that 
 the term $\cI_\trd(v_h^-,w_h^--w_h^+)$ appearing in \eqref{e:aDefGen} involves integrals over $\trd$ and $\vee$, both subsets of $K_-$.  Here, $\vh^-$ and~$\wh^-$ are the standard restrictions of~$\vh$ and~$\wh$ to~$K_-$, respectively, whereas~$\wh^+$ is defined on~$K_-$ by extending the restriction~$\wh|_{K^+}$  across the face~$F$ as a polynomial.
We emphasize that only the polynomial part $w_h\in V_h$ of $w=\widetilde{w}+w_h\in V^\mu+V_h$ is being extended across $F$; the continuous part $\widetilde w\in V^\mu$ does not need to be extended.  
A similar comment applies to the term $\cI_\tru(v_h^+,w_h^--w_h^+)$.

\subsection{Consistency}
\label{s:Consistency}

\begin{lem}[Consistency]
Let $\mu\in[0,1)$, and suppose that the solution $u\in H^1_0(\Omega)$ of the BVP \eqref{e:BVP101} satisfies $u\in V^\mu$. 
 Then \eqref{e:Cons} holds.
\end{lem}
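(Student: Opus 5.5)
We plan to evaluate $\aDG(u,v_h)$ directly from \eqref{e:aDefGen}, using the trivial decomposition $u=\widetilde u+w_h$ with $\widetilde u=u$ and $w_h=0$. This choice is legitimate by Lemma~\ref{lem:Jumps}. With it, all penalty terms on interior faces and all ``symmetry'' terms on interior faces vanish, since they involve only the jumps $w_h^--w_h^+=0$. On boundary faces, $u$ vanishes on $\deO$ because $u\in H^1_0(\Omega)\cap C^{0,1/3}(\overline\Omega)$. Hence the terms $\frac{\eta}{h_F^d}\int_F u\,v_h\,\rd\cH^d$ vanish, and so do the terms $\cI_\trd(v_h,u)$. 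For the latter, the plan is to show that, for $v_h$ polynomial and $u\in V^\mu$ with zero trace on $F$, $\cI_\trd(v_h,u)=\int_\trd \dive[\nabla v_h\,u]\dx-\int_\vee \dn v_h\,u\ds$ equals the ``boundary pairing'' of $\dn v_h$ with $u|_F$, which is zero. We would justify this by approximating $u$ in $H^1(\trd)$ by functions that are smooth up to $\vee$ and vanish near $F$, and then applying the divergence theorem on $\trd$ minus a neighbourhood of $F$. Continuity of the trace on $\vee$, via Lemma~\ref{lem:dnu2}, and of the volume integral controls the limit.

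It then remains to show
\[
\sum_{K}\int_K\nabla u\cdot\nabla v_h\dx-\frac12\sum_{F\in\cF_I}\big(\cI_\trd(u,[v_h])-\cI_\tru(u,[v_h])\big)-\sum_{F\in\cF_B}\cI_\trd(u,v_h)=\int_\Omega f v_h\dx,
\]
where $[v_h]=v_h^--v_h^+$. We expand each $\cI$ term by bilinearity into contributions $\cI_\trd(u,v_h^-)$, $\cI_\trd(u,v_h^+)$, $\cI_\tru(u,v_h^-)$ and $\cI_\tru(u,v_h^+)$. The key identity to establish is the $V^\mu$-analogue of \eqref{e:II0poly}: for $u\in V^\mu$ and a polynomial $v$, we want $\cI_\trd(u,v)+\cI_\tru(u,v)=0$. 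We would prove this by applying the divergence theorem on the Lipschitz quadrilateral $\loz$. Since $u\in H^2_{\mathrm{loc}}$ with $\Delta u\in L_2$ and the normal-derivative traces lie in $L_q(\lozenge)$ by Lemma~\ref{lem:dnu2}, we get $\int_\loz \dive[\nabla u\,v]\dx=\int_\lozenge \dn u\,v\ds$. The endpoints of $F$ that lie on $\deO$ need care, so we would argue on $\loz$ with small neighbourhoods of the vertices removed and pass to the limit using the integrability of $\dn u$ on the segments. With this identity, $\cI_\tru(u,\cdot)=-\cI_\trd(u,\cdot)$, and the interior-face term becomes $-\big(\cI_\trd(u,v_h^-)-\cI_\trd(u,v_h^+)\big)$. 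We rewrite it as $-\cI_\trd(u,v_h^-)-\cI_\tru(u,v_h^+)$, so that each wedge is now paired only with the polynomial belonging to its own element.

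Next we regroup the sum by elements, using the wedge partition \eqref{e:WedgePartition}. Each element $K$ is a union of its wedges $W$, each paired with $v_h|_K$. Summing $\cI_W(u,v_h|_K)=\int_W(\Delta u\,v_h+\nabla u\cdot\nabla v_h)\dx-\int_{\partial W\setminus F}\dn u\,v_h\ds$ over the wedges of $K$, the segment integrals cancel in pairs. They cancel because adjacent wedges inside $K$ share a segment $S$ with opposite normals, and $u$ and $v_h$ are single-valued there since $\dn u$ has a well-defined trace on $S$ from both sides (again by $u\in H^2$ near $S$ away from $\deO$). Thus $\sum_{W\subset K}\cI_W(u,v_h|_K)=\int_K(\Delta u\,v_h+\nabla u\cdot\nabla v_h)\dx$. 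Substituting gives $\sum_K\int_K\nabla u\cdot\nabla v_h-\sum_K\int_K(\Delta u\,v_h+\nabla u\cdot\nabla v_h)=-\int_\Omega\Delta u\,v_h=\int_\Omega f v_h$, which is \eqref{e:Cons}.

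The main obstacle is the rigorous justification of the divergence theorem for $u\in V^\mu$, which is not in $H^2$ up to $\deO$. This affects two places: on $\loz$, whose corners touch $\deO$ when $F$ reaches the boundary, and in the vanishing of $\cI_\trd(v_h,u)$ on boundary faces. We would handle both by a cutoff or exhaustion argument near $\deO$. The limits would be controlled using $\delta^\mu D^2u\in L_2$, $\Delta u\in L_2$, and the $L_q$ integrability of traces from Lemma~\ref{lem:dnu2} together with Hölder's inequality.
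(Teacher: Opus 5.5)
Your proposal is correct and follows the paper's proof essentially step by step. The shared steps are: the decomposition with $w_h=0$; the vanishing of both boundary terms because $u\in H^1_0(\Omega)$; the $V^\mu$-version of \eqref{e:II0poly}, used so that each wedge is paired only with its own element's polynomial; and the regrouping over elements via \eqref{e:WedgePartition} with cancelling segment integrals. The differences are minor. You use the H\"older continuity of $u$ where the paper uses that $H^1_0(\Omega)$ is the kernel of the trace on $\deO$. You kill $\cI_\trd(v_h,u)$ via $C_0^\infty(\Omega)$ approximants, where the paper extends $u$ by zero into the exterior wedge and applies the divergence theorem on $\loz$. Finally, you sketch a justification of the $V^\mu$-version of \eqref{e:II0poly}, which the paper only asserts; in that cutoff argument the term needing control is $\int_\loz v\,\nabla u\cdot\nabla\chi_\epsilon\dx$, and it vanishes as $\epsilon\to0$ because $\nabla u\in L_2$ and $\|\nabla\chi_\epsilon\|_{L_2}$ is scale-invariant in two dimensions.
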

\begin{proof}
We first note that, if $w\in V^\mu$, we can take $w_h=0$ in \eqref{e:aDefGen} so that the symmetry and penalty terms in \eqref{e:aDefGen} vanish. 

 If, additionally, $w\in H^1_0(\Omega)$, then the second and third boundary terms in \eqref{e:aDefGen} also vanish. 
In more detail, the third boundary term $\int_F w v_h \,\rd \cH^d$ vanishes for $w\in H^1_0(\Omega)$ because $H^1_0(\Omega)$ is the kernel of the trace operator onto~$\partial\Omega$ \cite[Thm~3.5]{farkas2001sobolev} (see also \cite[\S4]{Hinz} for more general related results). 
As for the second boundary term $\cI_\trd(v_h,w)$, we first fix a boundary face $F\in \cF_B$. Let $\tru\subset\R^2\setminus \Omega$ be the region
shown in Figure~\ref{fig:BdryTria}, which is  constructed analogously as in Figure \ref{fig:wedgesB}(a) for internal faces.
The fact that $\tru\subset\R^2\setminus \Omega$ can be justified by an induction on the mesh refinement, having verified it by hand on the mesh $\cT_1$. 
Let $\wedge$, $\cI_\tru$, $\loz$, and $\lozenge$ be defined as for internal faces. 
We then note that $w\in H^1_0(\Omega)$ implies that the extension $w_0$ by zero on $\R^2\setminus\Omega$ belongs to $H^1(\R^2)$, so that, as in \eqref{e:II0poly}, we get
\begin{equation*} \cI_\trd(v_h,w) = 
\cI_\trd(v_h,w)
+ \cI_\tru(v_h,0)
=\int_\loz \dive[\nabla v_h\, w_0]- \int_\lozenge \dn v_h\, w_0=0.
\end{equation*} 

\begin{figure}[htb]\centering
   \includegraphics[width=70mm]{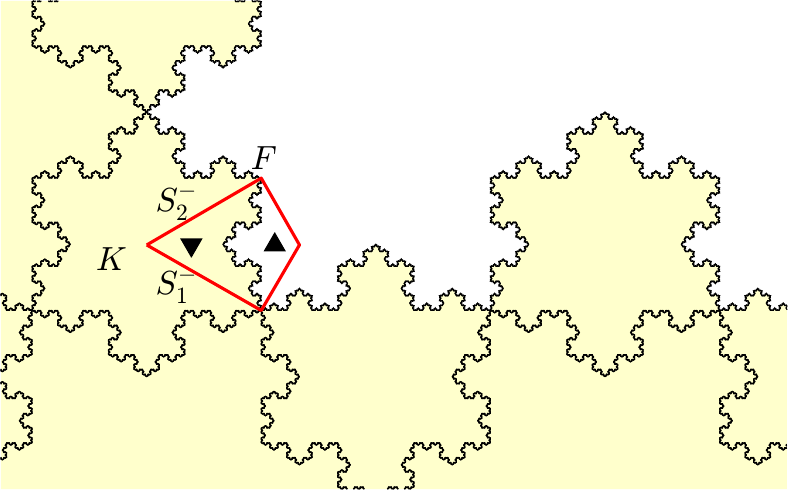}
\hspace{-30mm}\raisebox{40mm}{$\R^2\setminus\Omega$}\hspace{30mm}
\caption{For each boundary face $F$, the wedge $\tru$ is contained in the domain complement $\R^2\setminus\Omega$.}
\label{fig:BdryTria}
\end{figure}

To summarise, so far we have shown that, if $w\in V^\mu\cap H^1_0(\Omega)$ and $v_h\in V_h$, then 
\begin{align*}
\aDG (w,v_h)
=&\sum_{K\in\cT }\int_K \nabla w\cdot\nabla v_h\dx 
-\frac12\sum_{F\in\cF_I}\Big[
 \cI_\trd(w,v_h^--v_h^+)-\cI_\tru(w,v_h^--v_h^+)\Big]
-\sum_{F\in\cF_B}\cI_\trd(w,v_h).
\end{align*}
For such $w$ and $v_h$, noting that~\eqref{e:II0poly} holds also for $w\in V^\mu$ and $v\in \IP^p(\R^2)$ (and not just for $w,v\in \IP^p(\R^2)$), for each $F\in \cF_I$, we have that 
$\cI_\trd(w,v_h^+) = -\cI_\tru(w,v_h^+)$ and $\cI_\tru(w,v_h^-) = -\cI_\trd(w,v_h^-)$, so that
\begin{align*}
\aDG (w,v_h)
=&\sum_{K\in\cT }\int_K \nabla w\cdot\nabla v_h\dx 
-\sum_{F\in\cF_I}\Big[
 \cI_\trd(w,v_h^-)+\cI_\tru(w,v_h^+)\Big]
-\sum_{F\in\cF_B}\cI_\trd(w,v_h)\\
=&\sum_{K\in\cT }\int_K \nabla w\cdot\nabla v_h\dx  \\
 & -\sum_{F\in\cF_I}\Big[
\int_\trd \dive[\nabla w\, v_h^-] \dx  
+\int_\tru \dive[\nabla w\, v_h^+] \dx   
- \int_\vee \partial_\bn w\, v_h^- \ds 
- \int_\wedge \partial_\bn w\, v_h^+ \ds 
\Big]\\
&  -\sum_{F\in\cF_B}  
\Big[\int_\trd \dive[\nabla w\, v_h]  \dx 
- \int_\vee \partial_\bn w\, v_h \ds \Big].
\end{align*}
We then expand each of the divergence terms using the product rule $\dive[\nabla w\, v]=\Delta w\, v + \nabla w \cdot \nabla v$. 
As explained earlier in \S\ref{s:Wedge}, each element $K$ is the disjoint union of 3, 4, 5, or 6 wedges (of type either $\trd$ or $\tru$). 
Moreover, the $\trd$ and $\tru$ wedges appearing in the sum over faces above form a disjoint partition of $\Omega$ by \eqref{e:WedgePartition}.
Also, each of the straight line segments forming $\vee$ and $\wedge$ appear exactly twice, but with opposing normal directions, so that the corresponding integrals sum to zero. Hence, rewriting the sum above as a sum over elements gives 
\begin{align}
\label{e:Cons1}
\aDG(w,v_h)= \sum_{K\in\cT }\int_{K}-\Delta w\, v_h \dx, 
\end{align}
for $w\in V^\mu\cap H^1_0(\Omega)$ and $v_h\in V_h$. 

Finally, taking $w=u$ in \eqref{e:Cons1}, 
 where $u$ is the solution of the BVP \eqref{e:BVP101}, we 
 obtain that
\begin{align*}
\aDG(u,v_h)= \int_\Omega fv_h\dx  = \cL(v_h),
\end{align*}
which concludes the proof of the consistency \eqref{e:Cons} of the DG variational problem \eqref{e:DG}.
\end{proof}

\subsection{Continuity}
\label{s:Continuity}
  We now prove a continuity bound for~$\aDG(\cdot, \cdot)$ (in Lemma \ref{l:Cont} below), after defining suitable norms.

Let $\mu\in[0,1)$ and let $q>1$ satisfy condition \eqref{e:qmu}. 
    For $w=\widetilde{w}+w_h\in V^\mu+V_h$, with~$V^{\mu}$ defined in~\eqref{e:V}, we introduce the DG and the ($q$-dependent) DG$^+$ norms by
 \begin{align}\nonumber
\|w\|\DG^2&:=\sum_{K\in \cT }\|\nabla w\|_{L_2(K)}^2 
+ \sum_{F\in\cF_I} \frac{1}{h_F^d}\|w_h^--w_h^+\|_{\IL_2(F)}^2 
+ \sum_{F\in\cF_B} \frac{1}{h_F^d}\|w\|_{\IL_2(F)}^2,\\
\|w\|\DGp^2&:=\|w\|\DG^2 + 
\sum_{K\in \cT } \Big(h_K^2 \|\Delta w\|_{L_2(K)}^2 + h_K^{2-2/q}\|\partial_\bn w\|_{L_q(\ast_K)}^2\Big),
\label{e:Norms}
\end{align}
where, for each $K\in \cT $, $\ast_K:=\bigcup_{F\in\cF_I\cup \cF_B} \lozenge\cap K$ is the union of all the line segments (defined in \S\ref{s:Wedge}) separating $K$ into wedges (so that $\ast_K$  is a union of between 3 and 6 line segments, recall Figure~\ref{fig:wedgesB}(b)).
Thanks to Lemma~\ref{lem:dnu2}, the assumptions on $q$ imply that $\|\partial_\bn w\|_{L_q(\ast_K)}$ is finite for each $K\in\cT$, and hence that the norm $\|w\|\DGp$ is finite.
Also, we recall that, by Lemma~\ref{lem:Jumps},  the jump $w_h^--w_h^+$ is independent of the choice of decomposition of $w=\widetilde{w}+w_h$ for $\widetilde{w}\in V^\mu$ and $w_h\in V_h$.  The norms in \eqref{e:Norms} are natural generalisations of those used in the classical analysis of SIP-DG-FEM schemes for Lipschitz polygonal/polyhedral $\Omega$; see, e.g.,\ \cite[eq.~(4.17), (4.18), (4.22)]{di2011mathematical}.
In particular, the contribution  
$h_K^2 \|\Delta w\|_{L_2(K)}^2 + h_K^{2-2/q}\|\partial_\bn w\|_{L_q(\ast_K)}^2$ to $\|w\|\DGp^2-\|w\|\DG^2$ from each $K\in \cT$ is a proxy for $h_K\|\partial_\bn w\|^2_{L_2(\partial K)}$ in the Lipschitz case. 

 \begin{lem}[Continuity]
\label{l:Cont}
Let $\mu\in[0,1)$ and let $q>1$ satisfy condition \eqref{e:qmu}.  Then there exists $\Csip>0$, depending only on $p$ and $q$, such that
\begin{align}\label{e:Cont}
|\aDG(w,v_h)|\leq \Csip \|w\|\DGp \|v_h\|\DG \qquad \forall w\in V^\mu+V_h, \; v_h\in V_h. 
\end{align}
 \end{lem}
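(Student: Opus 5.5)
The plan is to bound each group of terms in the definition \eqref{e:aDefGen} of $\aDG(w,v_h)$ separately, each by a product of a piece of $\|w\|\DGp$ and a piece of $\|v_h\|\DG$, and then sum using Cauchy--Schwarz over the finite collections of faces and elements. Throughout, write $w=\widetilde w+w_h$. Recall that every element is split into at most six wedges and that $h_F$ is comparable to $h_K$ by \eqref{e:hFhK}; this lets us pass freely between face-indexed and element-indexed sums at the cost of constants depending only on $p$ and $q$.

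\textbf{The easy terms.} The volume term is bounded by $\sum_K\|\nabla w\|_{L_2(K)}\|\nabla v_h\|_{L_2(K)}$. The penalty terms on interior faces involve only the jumps $w_h^--w_h^+$, and the boundary penalty involves $w|_F$. Both are bounded by Cauchy--Schwarz in $\IL_2(F)$ with the weights $h_F^{-d/2}\cdot h_F^{-d/2}$, which appear in both norms.

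\textbf{The symmetry terms.} These have the form $\cI_\trd(v_h^-,w_h^--w_h^+)$, together with the analogous $\cI_\tru$ terms and the boundary terms $\cI_\trd(v_h,w)$. For the interior ones, both arguments are polynomials, so \eqref{e:DiscreteBoundOnI} gives the bound $C_p\|\nabla v_h\|_{L_2(\trd)}h_F^{-d/2}\|w_h^--w_h^+\|_{\IL_2(F)}$. For the boundary term $\cI_\trd(v_h,w)$, the argument $w$ is not a polynomial. Here I would use the consistency-type observation: extending by zero is not available, so instead I would write $\cI_\trd(v_h,w)$ directly. This gives $\int_\trd(\Delta v_h\,w+\nabla v_h\cdot\nabla w)-\int_\vee\dn v_h\,w$. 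The difficulty is that controlling $w$ in $L_2(\trd)$ and on $\vee$ needs a Poincaré/trace-type bound in terms of $\|\nabla w\|_{L_2(K)}+h_F^{-d/2}\|w\|_{\IL_2(F)}$. This holds on the reference snowflake by a compactness (Peetre--Tartar) argument, since the only function with zero gradient and zero trace on $F$ is zero. It then transfers by scaling, while polynomial inverse estimates handle $\Delta v_h$ and $\dn v_h$.

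\textbf{The consistency terms (main obstacle).} These are $\cI_\trd(w,v_h^--v_h^+)$, $\cI_\tru(w,v_h^--v_h^+)$ and the boundary $\cI_\trd(w,v_h)$, where now the first argument is the nonsmooth one. I would expand with the product rule, writing $z:=v_h^--v_h^+\in\IP^p(\R^2)$:
\[
|\cI_\trd(w,z)|\le\|\nabla w\|_{L_2(\trd)}\|\nabla z\|_{L_2(\trd)}+\|\Delta w\|_{L_2(\trd)}\|z\|_{L_2(\trd)}+\|\dn w\|_{L_q(\vee)}\|z\|_{L_{q'}(\vee)},
\]
using Hölder with $1/q+1/q'=1$. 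I then insert the scalings $h_F^{\pm1}$ and $h_F^{\pm(1-1/q)}$ so that the $w$-factors become exactly the $\DGp$ contributions $h_K\|\Delta w\|_{L_2(K)}$ and $h_K^{1-1/q}\|\dn w\|_{L_q(\ast_K)}$. The $z$-factors become $\|\nabla z\|_{L_2(\trd)}$, $h_F^{-1}\|z\|_{L_2(\trd)}$ and $h_F^{-(1-1/q)}\|z\|_{L_{q'}(\vee)}$. The key step is then a scale-invariant finite-dimensional norm equivalence. On the reference configuration, obtained by mapping with the similarity $S$, each of these quantities is a seminorm on $\IP^p(\R^2)$ dominated by $\|z\|_{\IL_2(F)}$, since the latter is a norm there, as observed after \eqref{e:II0poly}. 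The scaling \eqref{e:ScalingHausdorff}, together with $|\vee|\sim h_F$ and $|\trd|\sim h_F^2$, turns all three into $\le C_{p,q}h_F^{-d/2}\|z\|_{\IL_2(F)}$, exactly as in \eqref{e:DiscreteBoundOnI1}. The boundary consistency term is identical with $z=v_h$.

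\textbf{Summation.} Finally I sum over faces with Cauchy--Schwarz. The wedge and segment pieces $\trd,\tru,\vee,\wedge$ attached to each face are contained in $K_\pm$ and $\ast_{K_\pm}$, and each element carries at most six faces, so the face sums of the $w$-factors are controlled by $\|w\|\DGp^2$. This yields \eqref{e:Cont} with $\Csip$ depending only on $p$ and $q$. The uniformity of the constants relies on all face configurations being similar to the single reference configuration $S(s_1\OO\cup s_2\OO)$, plus the boundary one.
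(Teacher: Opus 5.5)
Your proposal is correct and follows essentially the same route as the paper. You use the same Hölder splitting of each wedge term $\cI_\trd$, $\cI_\tru$ into a product of a $w$-factor, which matches the $\DGp$ contributions, and a second-argument factor. When that second argument is a polynomial, you control it by finite-dimensional norm equivalence plus scaling; for the non-polynomial argument of the boundary symmetry term, you combine a Poincaré inequality with a segment trace bound on $\vee$.

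The only difference is minor and harmless. You prove the Poincaré inequality on the whole element $K$, which is an extension domain by Jones, whereas the paper proves it on the wedge $\trd$ via the reflection argument of Lemma \ref{lem:Poincare}. The resulting factor $\|\nabla w\|_{L_2(K)}$ is absorbed in the face summation because each element carries at most six faces.
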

\begin{proof}
Let $F\in\cF_I$. 
Suppose that $w\in H^1(\trd)$ with $\Delta w\in L_2(\trd)$ and $\partial_\bn w\in L_q(\vee)$, 
 and $v\in H^1(\trd)$. 
Let $Q\subset \trd$ be any Lipschitz open set such that $\vee\subset\partial Q$, for instance the quadrilateral $Q$ shown in Figure \ref{fig:Boomerangs}(a). Then $v\in H^1(Q)$, and by standard trace results on Lipschitz domains (e.g.,~\cite[eq.~(1,4,4,5) and Thm~1.5.1.2]{Grisvard85}) it follows that, for every $1\leq q'<\infty$, $v\in L_{q'}(\vee)$ and there exists a constant $C_{\vee,q'}$  depending only on $q'$ such that
\begin{align}\label{e:TraceVee}
 h_F^{-1/q'} \|v\|_{L_{q'}(\vee)} \leq 
C_{\vee,q'} \left(\frac{1}{h_F}\|v\|_{L_2(\trd)} + \|\nabla v\|_{L_2(\trd)}\right)\qquad \forall v\in H^1(\trd).
\end{align}
Let $q' = \frac{q}{q-1} \in [2, \infty)$ be the conjugate exponent of $q$. 
      Then, using the triangle and the Cauchy--Schwarz inequalities,  we can bound 
 \begin{align*}
|\cI_\trd(w,v)|
   &\leq \Big|
\int_\trd \Delta w\, v \dx  \Big| + \Big|
\int_\trd \nabla w\cdot \nabla v \dx  \Big| + \Big| \int_\vee \dn w\, v \ds \Big|
\\
&\le h_F \|\Delta w\|_{L_2(\trd)} \frac{1}{h_F}\|v\|_{L_2(\trd)} + 
\|\nabla w\|_{L_2(\trd)} \|\nabla v\|_{L_2(\trd)} + h_F^{1-1/q}\|\partial_\bn w\|_{L_q(\vee)} h_F^{1/q-1}\|v\|_{L_{q'}(\vee)}
\\
&\le  \underbrace{\Big(h_F^2 \|\Delta w\|_{L_2(\trd)}^2 + \|\nabla w\|_{L_2(\trd)}^2 + h_F^{2-2/q}\|\partial_\bn w\|_{L_q(\vee)^2} \Big)^{1/2}}_{=:N_1^\trd(w)}
\\
&\hspace{30mm}\cdot\underbrace{\Big(\frac{1}{h_F^2}\|v\|_{L_2(\trd)}^2 + \|\nabla v\|_{L_2(\trd)}^2+ h_F^{2/q-2} \| v\|_{L_{q'}(\vee)}^2\Big)^{1/2}}_{=:N_2^\trd(v)}.
  \end{align*}
Combining the trace inequality \eqref{e:TraceVee} (in the case $q' = \frac{q}{q-1}$) with the Poincar\'e inequality provided by Lemma \ref{lem:Poincare}, we have that 
\begin{align*}
 N_2^\trd(v)
\leq C_q\bigg(\|\nabla v\|_{L_2(\trd)} + \frac{1}{h_F^{d/2}}\|v\|_{\IL_2(F)} \bigg) \qquad \forall v\in H^1(\trd),
\end{align*}
for some $C_q>0$ depending only on $q$.

When one of the arguments of~$\cI_{\trd}(\cdot, \cdot)$ is a polynomial in $\IP^p(\trd)$, we will make use of the following inverse inequalities, which follow from the finite-dimensionality of $\IP^p(\trd)$: 
there exist constants $C_{p,q,1},C_{p,q,2}>0$, depending only on $p$ and $q$, such that
 \begin{align*}
N_1^\trd({v_p})\leq C_{p,q,1} \|\nabla {v_p}\|_{L_2(\trd)}
\quad {\text{and}} \quad 
N_2^\trd({v_p})\leq C_{p,q,2} \frac1{h_F^{d/2}}\|{v_p}\|_{\IL_2(F)} 
\qquad \forall v_p\in\IP^p(\trd),
   \end{align*}
 and similarly for $\tru$ and $\wedge$ in place of $\trd$ and $\vee$.
                                  
Applying the above estimates we can bound, for $w=\widetilde{w}+w_h\in V^\mu+V_h$ and $v_h\in V_h$,
\begin{align*}
|\aDG(w,v_h)|
\leq&
\sum_{K\in\cT }\|\nabla w\|_{L_2(K)} \|\nabla v_h\|_{L_2(K)}  \\\notag
&+\sum_{F\in\cF_I}{\bigg[} \frac12 {\Big(}
N_1^\trd(w)N_2^\trd(v_h^--v_h^+)+N_1^\tru(w)N_2^\tru(v_h^--v_h^+) \\\notag
&\hspace{20mm}
+N_1^\trd(v_h^-)N_2^\trd(w_h^--w_h^+)+N_1^\tru(v_h^+)N_2^\tru(w_h^--w_h^+){\Big)}
\\\notag
&
\qquad \qquad+ \frac{\eta}{h_F^{d}}\|w_h^--w_h^+\|_{\IL_2(F)} \|v_h^--v_h^+\|_{\IL_2(F)}{\bigg]}\\\notag
&+\sum_{F\in\cF_B} \Big[N_1^\trd(w)N_2^\trd(v_h)+N_1^\trd(v_h)N_2^\trd(w)+\frac\eta{h_F^d}\|w\|_{\IL_2(F)}\|v_h^-\|_{\IL_2(F)}\Big]
\\
\leq &
\sum_{K\in\cT } \|\nabla w\|_{L_2(K)} \|\nabla v_h\|_{L_2(K)}\\
& + \sum_{F\in\cF_I} {\bigg[} \frac{1}{2}C_{p,q,2}{\bigg(} \big(N_1^\trd(w)+N_1^\tru(w)\big)\frac{1}{h_F^{d/2}}\|v_h^--v_h^+\|_{\IL_2(F)}\\
& \qquad\qquad\qquad + C_{p,q,1}\big(\|\nabla v_h\|_{L_2(\trd)} + \|\nabla v_h\|_{L_2(\tru)} \big)\frac{1}{h_F^{d/2}}\|w_h^--w_h^+\|_{\IL_2(F)}{\bigg)}
 \\
& \qquad\qquad  + \frac{\eta}{h_F^{d}}\|w_h^--w_h^+\|_{\IL_2(F)} \|v_h^--v_h^+\|_{\IL_2(F)}{\bigg]} \\
& + \sum_{F\in\cF_B} {\bigg[} C_{p,q,2}N_1^\trd(w)\frac{1}{h_F^{d/2}}\|v_h^-\|_{\IL_2(F)}\\
& \qquad \qquad + C_{p,q,1}
\|\nabla v_h\|_{L_2(\trd)} 
C_q\bigg(\|\nabla w\|_{L_2(\trd)} + \frac{1}{h_F^{d/2}}\|w\|_{\IL_2(F)} \bigg)
 \\
& \qquad\qquad 
+ \frac{\eta}{h_F^d}\|w\|_{\IL_2(F)}\|v_h^-\|_{\IL_2(F)}{\bigg]}.
\end{align*}
The bound \eqref{e:Cont} then follows, for some constant $\Csip>0$ depending only on $p$ and $q$ (through $C_{p,q,1}$, $C_{p,q,2}$ and $C_q$).
\end{proof}

\subsection{Quasi-optimality}
\label{s:QO}

By \cite[Thm 1.35]{di2011mathematical}, the coercivity~\eqref{e:Coercivity} and continuity~\eqref{e:Cont} of the bilinear form~$\aDG(\cdot,\cdot)$, together with the consistency \eqref{e:Cons}, imply that the Galerkin error is quasi-optimal.
We record this, and the other properties of the SIP-DG-FEM scheme \eqref{e:DG} proved so far, in the following theorem.

\begin{thm}[Quasi-optimality]\label{thm:QO}
Let $\cT$ be an LQU mesh on the Koch snowflake $\Omega$, $p\in\N$, and $V_h$ be the piecewise-polynomial space in \eqref{e:Vh}.
Let the penalty parameter $\eta$ satisfy \eqref{e:Eta}.
 Then the bilinear form $\aDG(\cdot,\cdot)$ is coercive \eqref{e:Coercivity}, and the SIP-DG-FEM formulation \eqref{e:DG} has a unique solution $u\DG\in V_h$. 
For any~$\mu \in [0, 1)$, $\aDG(\cdot,\cdot)$ is continuous in the sense of \eqref{e:Cont}. 
Moreover, if the solution~$u$ of the BVP \eqref{e:BVP101} belongs to~$V^{\mu}$, for some~$\mu \in [0, 1)$, 
 then the method is consistent \eqref{e:Cons}, and the following quasi-optimal error bound holds: 
\begin{align}\label{e:QO}
\|u-u\DG\|\DG \leq (1+2\Csip) \inf_{v_h\in V_h}\|u-v_h\|\DGp,
\end{align}
where $\Csip$ is the constant from \eqref{e:Cont}, and where the DG$^+$ norm \eqref{e:Norms} depends on the choice of $q>1$ satisfying condition \eqref{e:qmu}. 
   \end{thm}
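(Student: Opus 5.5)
The proof assembles results already established and adds the standard Strang-type argument in the form of \cite[Thm 1.35]{di2011mathematical}. Coercivity \eqref{e:Coercivity} and well-posedness come directly from Theorem~\ref{thm:Coercivity} under \eqref{e:Eta}. Continuity \eqref{e:Cont} for any $\mu\in[0,1)$ and any admissible $q$ is Lemma~\ref{l:Cont}. Consistency \eqref{e:Cons} under $u\in V^\mu$ is the consistency lemma of \S\ref{s:Consistency}. It therefore remains to derive \eqref{e:QO}.

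Fix an arbitrary $v_h\in V_h$. Split the error as $u-u\DG=(u-v_h)+(v_h-u\DG)$ and set $z_h:=u\DG-v_h\in V_h$.

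First, I need a triangle inequality for $\|\cdot\|\DG$ on $V^\mu+V_h$ and the bound $\|u-v_h\|\DG\le\|u-v_h\|\DGp$. The latter is immediate from \eqref{e:Norms}. For the former, note that the jump terms use only the polynomial part, and these are well defined and additive by Lemma~\ref{lem:Jumps}. For $u-v_h=u+(-v_h)$, the polynomial part is $-v_h$.

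Second, bound $z_h$. Since $z_h\in V_h$, coercivity gives $\tfrac12\|z_h\|\DG^2\le\aDG(z_h,z_h)$. By Galerkin orthogonality, which follows from \eqref{e:DG} and \eqref{e:Cons}, together with the bilinearity of the extended form \eqref{e:aDefGen} in its first argument, we have $\aDG(u\DG-v_h,z_h)=\aDG(u-v_h,z_h)$. Linearity in the first argument holds because each term of \eqref{e:aDefGen} is linear in $w$: the $\cI$ terms are linear, and the jump terms are linear through the additivity of polynomial parts. On $V_h\times V_h$ the extended form agrees with \eqref{e:aDefDisc}, by taking $\widetilde w=0$ and using $\cI_\trd(w^-,\cdot)$ with $w$ restricted to $K_-$. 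Continuity \eqref{e:Cont} then gives $\tfrac12\|z_h\|\DG^2\le\Csip\|u-v_h\|\DGp\|z_h\|\DG$, and hence $\|z_h\|\DG\le2\Csip\|u-v_h\|\DGp$.

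Combining the two steps gives $\|u-u\DG\|\DG\le(1+2\Csip)\|u-v_h\|\DGp$. Taking the infimum over $v_h$ yields \eqref{e:QO}.

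The only delicate point is checking that the extended form \eqref{e:aDefGen} really coincides with \eqref{e:aDefDisc} on $V_h\times V_h$ and is linear in its first argument on $V^\mu+V_h$. In the consistency terms, $\cI_\trd(w,\cdot)$ is evaluated with $w$ restricted to $\trd\subset K_-$, which for $w\in V_h$ equals $w_h^-$; the symmetric and penalty terms depend only on the well-defined polynomial jumps. Once this is checked, the argument is routine.
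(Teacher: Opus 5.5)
Your proposal is correct and follows the paper's route: the paper simply invokes \cite[Thm~1.35]{di2011mathematical} using coercivity \eqref{e:Coercivity}, continuity \eqref{e:Cont} and consistency \eqref{e:Cons}. Your argument (triangle inequality, Galerkin orthogonality through the linear extension \eqref{e:aDefGen}, coercivity with constant $\frac12$, then continuity) is exactly the proof of that abstract result, and it gives the same constant $1+2\Csip$; your checks that \eqref{e:aDefGen} reduces to \eqref{e:aDefDisc} on $V_h\times V_h$ and is linear in its first argument (via Lemma~\ref{lem:Jumps}) are the hypotheses that the citation uses implicitly.
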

Here quasi-optimality holds in the sense of \cite[Def.~1.54(ii)]{di2011mathematical}: the norms \eqref{e:Norms} on the left- and right-hand
sides of \eqref{e:QO} are different but scale with the same power of the mesh size for smooth ($H^2(\Omega)$) functions and quasi-uniform meshes.

 \subsection{Error estimates}
\label{s:Error}

We now study the accuracy of the best-approximation error on the right-hand side of \eqref{e:QO}, in order to obtain convergence estimates for the SIP-DG-FEM scheme \eqref{e:DG}. 
 Our aim here is to  
  prove low-order $h$-convergence estimates, 
 independent of the polynomial degree $p$ used to define the discrete space $V_h$.
Higher-order error estimates for $p>1$ require a finer regularity analysis of the BVP solution $u$ and are deferred to future work. 

Our analysis is only partial, because our estimates, stated in Theorem \ref{thm:Rates}, rely on the assumption that a certain polynomial approximation bound (Assumption~\ref{ass:H22}) and the solution regularity $u\in H^{2,2}_\mu\OO$ of \S\ref{ss:H22} hold for the same value of the exponent $\mu$, which 
 we are currently unable to prove. 
 As Remark \ref{rem:Assumption} explains, Assumption~\ref{ass:H22}  can be reduced to the validity of the compactness of certain function space embeddings.

For each $K\in\cT$, denote its distance from the domain boundary by 
\begin{equation}\label{e:deltaK}
\delta_K:=\inf\big\{\delta(\bx):\,\bx\in K\big\}=\inf\big\{|\bx-\by|:\, \bx\in K,\, \by\in\deO\big\},
\end{equation}
where $\delta$ was defined in \eqref{e:delta}.
We have $\delta_K\le \widehat \delta_K$, with $\widehat\delta_K$ defined in \S\ref{ss:MeshSeq}.

We begin with a technical lemma, in which we bound all terms in the DG$^+$ norm \eqref{e:Norms} by elementwise weighted Sobolev norms.
\begin{lem}[Bound on the~DG$^+$ norm]\label{lem:DGpvsSobol}
Let $\cT$ be an LQU mesh.
Let $\mu\in[0,1)$ and $q\ge 1$ satisfy condition \eqref{e:qmu}.
   For all $w\in V^\mu+V_h$,
\begin{align}\label{e:DGpvsSobol}
\|w\|\DGp^2
&\le(1+24C_\Tr^2)(1+36\widetilde C_q^2)
\bigg[\sum_{K\in \cT }\Big(\frac1{h_K^2}\|w\|_{L_2(K)}^2 + \|\nabla w\|_{L_2(K)}^2 
+h_K^2 \|\Delta w\|_{L_2(K)}^2 \Big)
\\&\hspace{40mm}
+\sum_{K\in\cT,\, \delta_K=0} h_K^{2-2\mu} \|\delta^\mu D^2 w\|_{L_2(K)}
+\sum_{K\in\cT,\, \delta_K>0} h_K^2  \|D^2 w\|_{L_2(K)} \bigg],
\nonumber
\end{align}
where $C_\Tr$ and $\widetilde C_q$ are the constants from \eqref{e:TraceIneq} and Lemma~\ref{lem:dnu2}.
\end{lem}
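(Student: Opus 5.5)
The plan is to bound the terms of $\|w\|\DGp^2$ one at a time and then sum them over the mesh, using the LQU property to switch between $h_F$ and $h_K$. Equation \eqref{e:hFhK} and $h_F=h_K/2$ for boundary faces give $h_F\simeq h_K$ with explicit constants. Note also that the squares on $\|\delta^\mu D^2w\|$ and $\|D^2w\|$ in \eqref{e:DGpvsSobol} are presumably intended.

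The volume terms $\sum_K\|\nabla w\|^2_{L_2(K)}$ and $h_K^2\|\Delta w\|^2_{L_2(K)}$ already appear on the right-hand side, so they need no work.

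\textbf{Jump and boundary terms.} For a boundary face $F\subset\deK$, \eqref{e:TraceIneq} together with $h_F=h_K/2$ gives
\[
h_F^{-d}\|w\|^2_{\IL_2(F)}\le 2^d\cdot 2C_\Tr^2\big(\|\nabla w\|^2_{L_2(K)}+h_K^{-2}\|w\|^2_{L_2(K)}\big).
\]
For an interior face, I first use Lemma~\ref{lem:Jumps} and continuity of $\widetilde w\in V^\mu\subset C^0(\Omega)$ across $F$. This identifies $w_h^--w_h^+$ on $F$ with the difference of the traces of $w$ from $K_-$ and from $K_+$. The triangle inequality then splits the jump into the two one-sided traces, and I apply \eqref{e:TraceIneq} on each of $K_-$ and $K_+$.

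\textbf{Counting.} Each element has at most 6 faces (some interior, some boundary), and each face touches at most two elements. Using $(a+b)^2\le2a^2+2b^2$ and $h_F^{-d}\le c\,h_K^{-d}$, this produces a factor of the form $24C_\Tr^2$ multiplying $\sum_K(h_K^{-2}\|w\|^2+\|\nabla w\|^2)$. I will not fuss over the exact constant beyond checking that it is at most $24$.

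\textbf{Segment terms.} It remains to bound $h_K^{2-2/q}\|\partial_\bn w\|^2_{L_q(\ast_K)}$, where $\ast_K$ consists of at most 6 segments. Here I invoke Lemma~\ref{lem:dnu2} from the appendix, which I expect provides a scaled trace estimate for $\partial_\bn w$ on a segment from the barycentre to a vertex. The two cases are:
\begin{itemize}
\item if $\delta_K>0$, the bound is by $\widetilde C_q(\|\nabla w\|_{L_2(K)}+h_K\|D^2w\|_{L_2(K)})$ (up to $h_K^{-1}\|w\|$ terms);
\item if $\delta_K=0$, the bound is by $\widetilde C_q(\|\nabla w\|_{L_2(K)}+h_K^{1-\mu}\|\delta^\mu D^2 w\|_{L_2(K)})$.
\end{itemize}
In the second case the segment endpoints may touch $\deO$, where the weight $\delta^\mu$ degenerates. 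This is exactly why the range \eqref{e:qmu} for $q$ is needed: Hölder integrability of $\delta^{-\mu}$ along the segment holds only for such $q$.

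\textbf{Scaling.} I would pull everything back to a reference element via the similarity $K=S(\Omega)$. The powers come out as $h_K^{2-2/q}$ against $h_K^{2/q-2}$ for the $L_q$ trace, and as $h_K^{\mu}$ for the weight, since $\delta$ scales linearly on boundary-adjacent elements. For interior elements, $\delta_K>0$, and I would use instead the unweighted $H^2$ trace on the reference element.

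Squaring, summing over the at most 6 segments, and multiplying by the previous factor gives the stated constant $(1+24C_\Tr^2)(1+36\widetilde C_q^2)$.

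\textbf{Main obstacle.} The step I expect to be hardest is this last weighted trace bound for boundary-touching elements. Two points need care: the distance to $\deO$ must be comparable to the distance to $\partial(S(\Omega))$ near the relevant points, and the constant from Lemma~\ref{lem:dnu2} must be scale-invariant. I plan to rely on Lemma~\ref{lem:dnu2} being stated precisely in this scaled form; the remaining work is bookkeeping.
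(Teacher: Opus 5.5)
Your proposal is correct and follows essentially the same route as the paper. The paper splits each interior jump into the two one-sided traces and applies \eqref{e:TraceIneq} on each element, which has at most six faces. It then covers $\ast_K$ by at most three $\vee$/$\wedge$ sets, applies Lemma~\ref{lem:dnu2} using $h_F\le h_K\le 2h_F$, squares, and sums. Your ``main obstacle'' is not really one: the weight in Lemma~\ref{lem:dnu2} is $\dist(\bx,\deK)^\mu$, and the trivial one-sided bound $\dist(\bx,\deK)\le\min\{\delta(\bx),h_K\}$ for $\bx\in K$ gives both of your cases at once, so you need neither a two-sided comparability of $\delta$ with $\dist(\cdot,\deK)$ nor a separate unweighted $H^2$ trace argument.
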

\begin{proof}
Since the elements of $H^1(\Omega)$ admit Dirichlet traces on any face $F\in\cF_I$ by \S\ref{ss:Traces}, for any $w=\widetilde w+w_h\in V^\mu+V_h$, the polynomial jump term $\|w_h^--w_h^+\|_{\IL_2(F)}=\|(\widetilde w+w_h^-)-(\widetilde w+w_h^+)\|_{\IL_2(F)}$ in the DG norm \eqref{e:Norms} can be bounded by the sum of the $\IL_2(F)$ traces of $w$ from $K_-$ and $K_+$.
Then, using the trace inequality~\eqref{e:TraceIneq}, the DG$^+$ norm \eqref{e:Norms} can be bounded by a sum of terms associated with the mesh elements as follows: for all $w\in V^\mu+V_h$
\begin{align} \label{e:DGpNormBd}
\|w\|\DGp^2
=&\sum_{K\in \cT }\Big(\|\nabla w\|_{L_2(K)}^2 
+ h_K^2\|\Delta w\|_{L_2(K)}^2 + h_K^{2-2/q}\|\partial_\bn w\|_{L_q(\ast_K)}^2\Big)
\\& \nonumber
+ \sum_{F\in\cF_I} \frac{1}{h_F^d}\|w_h^--w_h^+\|_{\IL_2(F)}^2 
+ \sum_{F\in\cF_B} \frac{1}{h_F^d}\|w\|_{\IL_2(F)}^2
\\ \nonumber
 \le & \sum_{K\in \cT }\bigg(\|\nabla w\|_{L_2(K)}^2 
+ h_K^2\|\Delta w\|_{L_2(K)}^2 + h_K^{2-2/q}\|\partial_\bn w\|_{L_q(\ast_K)}^2 + 2\!\!\sum_{F\in\cF,F\subset\deK}\frac{1}{h_F^d}\|w\|_{\IL_2(F)}^2\bigg)
\\ \nonumber
\le &(1+24C_\Tr^2)\sum_{K\in \cT }\bigg( \frac1{h_K^2}\|w\|_{L_2(K)}^2 + \|\nabla w\|_{L_2(K)}^2 
+h_K^2 \|\Delta w\|_{L_2(K)}^2 + h_K^{2-2/q}\|\partial_\bn w\|_{L_q(\ast_K)}^2\bigg),
 \end{align}
where, in the last step, we used that each element has at most six faces.

 Since each $\ast_K$ is the (non-disjoint) union of at most three $\vee$ or $\wedge$ sets (see Figure~\ref{fig:wedgesB}(b)), recalling that $h_K\le 2h_F$ for $F\subset\deK$ (see \eqref{e:hFhK}), and that $\dist(\bx,\deK)\le \min\{\delta(\bx),h_K\}$ for all $\bx\in K$, the trace inequality in Lemma~\ref{lem:dnu2} gives, for all $w\in V^\mu+V_h$
\begin{align*}
h_K^{1-1/q}\|\partial_\bn w\|_{L_q(\ast_K)}
\le \begin{cases} 
3^{1/q}\cdot 2^{1/q-1}\widetilde C_q \Big(\|\nabla w\|_{L_2(K)} +h_K^{1-\mu}  \|\delta^\mu D^2 w\|_{L_2(K)}\Big)
& \text{if }\delta_K=0,\\
3^{1/q}\cdot 2^{1/q-1}\widetilde C_q \Big(\|\nabla w\|_{L_2(K)} +h_K  \| D^2 w\|_{L_2(K)}\Big)
& \text{if }\delta_K>0.
\end{cases}
\end{align*}
Since $q\ge1$,   squaring and summing over the elements leads to
\begin{align*} &\sum_{K\in\cT}h_K^{2-2/q}\|\partial_\bn w\|_{L_q(\ast_K)}^2\\&
\le 36^{1/q}\,\widetilde C_q^2\Big(\sum_{K\in\cT}\|\nabla w\|_{L_2(K)}^2 
+\sum_{K\in\cT,\, \delta_K=0} h_K^{2-2\mu}  \|\delta^\mu D^2 w\|_{L_2(K)}^2
+\sum_{K\in\cT,\, \delta_K>0} h_K^2   \|D^2 w\|_{L_2(K)}^2\Big).
\nonumber
\end{align*}
Substituting this bound in \eqref{e:DGpNormBd} gives the assertion.
\end{proof}

On non-boundary-adjacent elements ($\delta_K>0$), the BVP solution $u$ belongs to $H^2(K)$.
 The next lemma 
gives a bound on the $H^1(K)$ norm of the $\IP^1(K)$ best-approximation error of $H^2(K)$ functions, and is obtained following a classical argument.
The compactness of $H^2(K)\subset H^1(K)$ is key in the proof and follows from the extension property \cite{Jones} of the Koch snowflake.
Similar arguments may give higher-order convergence for the approximation of $H^{p+1}(K)$ functions by $\IP^p(K)$ polynomials.
\begin{lem}[Approximation estimates for~$H^2(K)$ functions]\label{lem:H2approx}
There is a constant $C_{H^2}>0$ such that, for all elements $K\in\cT$ and all $w\in H^2(K)$,
\begin{equation}\label{e:H2approx}
\inf_{P_K\in\IP^1(K)} \Big(\|\nabla(w-P_K)\|_{L_2(K)}^2+\frac1{h_K^2}\|w-P_K\|_{L_2(K)}^2\Big)
\le C_{H^2}\, h_K^2\, \|D^2 w\|_{L_2(K)}^2.
\end{equation}
\end{lem}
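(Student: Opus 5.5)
The plan is to reduce to the reference element $\Omega$ by scaling, and then prove the estimate on $\Omega$ by the classical Bramble--Hilbert/Deny--Lions compactness argument.

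\textbf{Scaling.} Every $K\in\cT$ is $K=S(\Omega)$ for a similarity $S(\hat\bx)=\rho Q\hat\bx+\bx_0$, with $Q$ orthogonal and $\rho=h_K/2$. Given $w\in H^2(K)$ and $P_K\in\IP^1(K)$, set $\hat w:=w\circ S$ and $\hat P:=P_K\circ S$; the map $P_K\mapsto\hat P$ is a bijection of $\IP^1$. A change of variables gives
\[
\|\nabla(w-P_K)\|_{L_2(K)}=\|\nabla(\hat w-\hat P)\|_{L_2(\Omega)},\qquad
h_K^{-1}\|w-P_K\|_{L_2(K)}=\tfrac12\|\hat w-\hat P\|_{L_2(\Omega)},
\]
\[
h_K\|D^2w\|_{L_2(K)}=2\|D^2\hat w\|_{L_2(\Omega)}.
\]
For the last identity, note that the quantity $D^2$ in \eqref{e:H22mu} is invariant under rotations up to a constant factor between $1$ and $\sqrt2$, because it is equivalent to the Frobenius norm of the Hessian. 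So it suffices to prove
\[
\inf_{\hat P\in\IP^1}\|\hat w-\hat P\|_{H^1(\Omega)}\le C\,\|D^2\hat w\|_{L_2(\Omega)}\qquad\forall\,\hat w\in H^2(\Omega),
\]
with a constant $C$ depending only on $\Omega$.

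\textbf{Reference estimate.} Use the Deny--Lions argument. Suppose the estimate fails. Then there are $\hat w_n\in H^2(\Omega)$ with $\|\hat w_n\|_{H^2(\Omega)}=1$ and $\|D^2\hat w_n\|_{L_2(\Omega)}\to0$. By subtracting the $H^1$-orthogonal projection onto $\IP^1$, we may also assume $\hat w_n\perp\IP^1$ in $H^1(\Omega)$.

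The key input is compactness of the embedding $H^2(\Omega)\subset H^1(\Omega)$. Since $\Omega$ is an $(\varepsilon,\delta)$ domain, Jones' extension theorem \cite{Jones} gives a bounded extension operator $E:H^2(\Omega)\to H^2(\R^2)$. Multiplying by a cutoff function equal to $1$ near $\overline\Omega$, the extensions lie in a bounded set of $H^2(B)$ for a ball $B\supset\overline\Omega$. Rellich's theorem on $B$ then gives compactness in $H^1(B)$, and restricting to $\Omega$ gives compactness in $H^1(\Omega)$. Hence a subsequence converges in $H^1(\Omega)$. Because the second derivatives tend to zero in $L_2$, the sequence is Cauchy in $H^2(\Omega)$, so it converges in $H^2(\Omega)$ to some $\hat w$ with $D^2\hat w=0$.

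Since $\Omega$ is connected, $D^2\hat w=0$ forces $\hat w\in\IP^1$. But $\hat w\perp\IP^1$, so $\hat w=0$, which contradicts $\|\hat w\|_{H^2}=1$.

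\textbf{Main obstacle.} The only nonstandard point is the compact embedding on the non-Lipschitz domain $\Omega$. It is handled by Jones' extension theorem for $H^2$: Jones' result covers $W^k_p$ for all $k$, and the Koch snowflake is an $(\varepsilon,\delta)$ domain. Connectedness of $\Omega$ is also needed to conclude that $D^2\hat w=0$ implies $\hat w$ is affine.

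Finally, $C_{H^2}$ is independent of $K$, because all elements are similar to $\Omega$ and the scaling step transfers the reference constant unchanged.
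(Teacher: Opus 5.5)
Your proposal is correct and follows essentially the same route as the paper. The paper gets compactness of $H^2(\Omega)\subset H^1(\Omega)$ from Jones' extension theorem, applies a Poincar\'e-type inequality on the reference snowflake (in abstract form via Ziemer's Lemma~4.1.3 with the $H^2$-orthogonal projection onto $\IP^1$) and then scales by a similarity to $K$. You spell out the Deny--Lions contradiction argument that this lemma encapsulates, and obtain compactness from the $H^2$ extension plus Rellich on a ball rather than from compactness of $H^1(\Omega)\subset L_2(\Omega)$ via Sayas, Prop.~7.5; these differences are cosmetic.
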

\begin{proof}
 The snowflake $\Omega$ is an $H^s$ extension domain for all $s\in\N$ by \cite[p.~73]{Jones}.
Then \cite[Prop.~7.5]{sayas} implies that $H^1(\Omega)$ is compactly embedded in $L_2(\Omega)$, from which also $H^2(\Omega)$ is compactly embedded in $H^1(\Omega)$.
Then, denoting by~$L : H^2(\Omega) \to \IP^1(\Omega)$ the orthogonal $H^2(\Omega)$ projection, \cite[Lemma~4.1.3]{ziemer}  
 gives the Poincar\'e-type inequality
$$
\|\nabla(v-Lv)\|_{L_2\OO} + \|v-Lv\|_{L_2\OO}
\le C_{H^2}\|D^2 v\|_{L_2\OO} \qquad\forall v\in H^2\OO.$$
In the notation of \cite[Lemma~4.1.3]{ziemer}, this follows choosing $X_0=H^1\OO$, $X=H^2\OO$, $Y=\IP^1\OO$,   and $\|v\|_1=\|D^2v\|_{L_2(\Omega)}$.
Then the assertion \eqref{e:H2approx} follows by the scaling of the norms under a similarity $S:\Omega\to K$.
\end{proof}

On boundary-adjacent elements ($\delta_K=0$), the BVP solution $u$ does not belong to $H^2(K)$. 
 Recalling from \S\ref{ss:H22} that $u\in H^{2,2}_\mu(\Omega)$ for some $\mu\in(0,1)$, the second partial derivatives of $u$ may be singular on $\partial\Omega$, and hence on some subset of $\partial K$.  
 For every mesh element with $\delta_K=0$, there is a similarity $S:\Omega\to K$ such that $S(\Xi_i)=\deK\cap\deO$, for some $i\in\{1,\ldots,6\}$, 
where $\Xi_1,\ldots,\Xi_6$ are the following subsets of $\deO$:
\begin{align*}
&\Xi_1:=\big\{(r\cos\theta,r\sin\theta)\in\deO,\; \pi/6\le\theta\le 5\pi/6\big\}  ,\\
&\Xi_2:=\big\{(r\cos\theta,r\sin\theta)\in\deO,\; -\pi/6\le\theta\le 7\pi/6\big\} ,\\
&\Xi_3:=\big\{(0,1)\big\},  \\
&\Xi_4:=\big\{(0,1),\; (\sqrt3/2,1/2)\big\},  \\
&\Xi_5:=\big\{(0,1),\; (\sqrt3/2,1/2),\;(\sqrt3/2,-1/2),\;(0,-1)\big\},  \\
&\Xi_6:=\deO\cap \{|\bx|=1\}.
\end{align*}
The six sets $\Xi_1,\ldots, \Xi_6$ are shown in Figure~\ref{fig:SnowflakeXiSet}; the six cases correspond to elements with two boundary faces ($i=1$), four boundary faces ($i=2$), no boundary faces and one ($i=3$), two ($i=4$), four ($i=5$), or six ($i=6$)  boundary vertices.

\begin{figure}[htb]
\includegraphics[width=.99\textwidth]{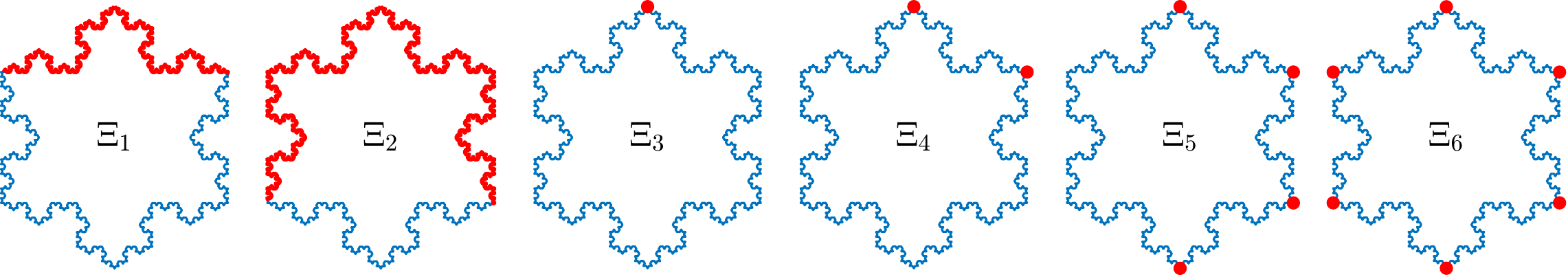}
\caption{The subsets $\Xi_1,\ldots,\Xi_6$ (in red) of $\deO$ introduced in Assumption~\ref{ass:H22}.}
\label{fig:SnowflakeXiSet}
\end{figure}

The above observations motivate the following approximation assumption.

\begin{ass}\label{ass:H22}
The exponent $\mu\in[0,1)$ is such that there is a constant $C_V>0$  such that
\begin{equation}\label{e:H22approxO}
\inf_{P\in\IP^1\OO} \Big(\|\nabla(w-P)\|_{L_2\OO}^2+\|w-P\|_{L_2\OO}^2\Big)
\le C_V
 \left(
\|\dist(\bx,\Xi_i)^\mu D^2 w\|_{L_2\OO}^2
+ \|\Delta w\|_{L_2\OO}^2
\right)
 \end{equation}       for all $w\in \{v\in H^1\OO:\; 
\Delta v\in L^2(\Omega) \text{ and }
\dist(\bx,\Xi_i)^\mu\, D^2v\in L_2\OO\}$  
and all $i\in\{1,\ldots,6\}$. 
           \end{ass}
  
The point
 of Assumption~\ref{ass:H22} is that it implies the following lemma.

\begin{lem}[Approximation estimates for~$V^\mu$ functions]\label{cor:H22}
If $\mu\in[0,1)$ is such that Assumption~\ref{ass:H22} holds, then, for all $w\in V^\mu$ and all $K\in\cT$ with $\delta_K=0$,
\begin{equation}\label{e:H22approxK}
\inf_{P_K\in\IP^1(K)} \Big(\|\nabla(w-P_K)\|_{L_2(K)}^2+\frac1{h_K^2}\|w-P_K\|_{L_2(K)}^2\Big)
\le C_V
 \left(
h_K^{2-2\mu}\, \|\delta^\mu D^2 w\|_{L_2(K)}^2
+ h_K^2\|\Delta w\|_{L_2\OO}^2
\right)
.
 \end{equation}
\end{lem}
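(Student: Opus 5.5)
The plan is to obtain \eqref{e:H22approxK} from \eqref{e:H22approxO} by a scaling argument, exactly as in the proof of Lemma~\ref{lem:H2approx}. Fix $w\in V^\mu$ and $K\in\cT$ with $\delta_K=0$. As observed before Assumption~\ref{ass:H22}, there are a similarity $S:\Omega\to K$, with contraction factor $\rho=h_K/2$ (since $\diam\Omega=2$), and an index $i\in\{1,\ldots,6\}$ such that $S(\Xi_i)=\deK\cap\deO$. Define $\widehat w:=w\circ S$ on $\Omega$. Since $S$ is a composition of a rotation, a dilation and a translation, $P\mapsto P\circ S^{-1}$ is a bijection from $\IP^1\OO$ onto $\IP^1(K)$. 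It therefore suffices to compare each term under the change of variables $\bx=S(\widehat\bx)$.

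The scalings are routine. They are
\[
\|\nabla(w-P\circ S^{-1})\|_{L_2(K)}^2=\|\nabla(\widehat w-P)\|_{L_2\OO}^2,\qquad
\|w-P\circ S^{-1}\|_{L_2(K)}^2=\rho^2\|\widehat w-P\|_{L_2\OO}^2,
\]
and $\|\Delta\widehat w\|_{L_2\OO}^2=\rho^2\|\Delta w\|_{L_2(K)}^2$. Because rotations preserve the Frobenius norm of the Hessian, $D^2\widehat w(\widehat\bx)=\rho^2 D^2w(S\widehat\bx)$. Also $\dist(\widehat\bx,\Xi_i)=\rho^{-1}\dist(S\widehat\bx,S(\Xi_i))$. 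Together these give
\[
\|\dist(\cdot,\Xi_i)^\mu D^2\widehat w\|_{L_2\OO}^2=\rho^{2-2\mu}\,\|\dist(\cdot,\deK\cap\deO)^\mu D^2 w\|_{L_2(K)}^2 .
\]
Dividing the left-hand side by $h_K^{-2}\simeq\rho^{-2}$ then produces the powers $h_K^{2-2\mu}$ and $h_K^2$ of \eqref{e:H22approxK}, with the factors of $2$ absorbed into (or slightly changing) the constant $C_V$.

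The key geometric step, which I expect to be the only real point, is to replace $\dist(\bx,\deK\cap\deO)$ by $\delta(\bx)=\dist(\bx,\deO)$. Trivially $\delta(\bx)\le\dist(\bx,\deK\cap\deO)$, but we need the reverse inequality up to a constant for $\bx\in K$, because $\mu\ge0$ and we want the right-hand side bounded by $\|\delta^\mu D^2w\|_{L_2(K)}$. The plan is to show that for $\bx\in K$ the nearest point of $\deO$ is comparable to a point of $\deK\cap\deO$, that is, $\dist(\bx,\deK\cap\deO)\le c\,\delta(\bx)$ with $c$ independent of $K$. I would argue as follows. By LQU and self-similarity, the configuration of $\deO$ within a neighbourhood of $K$ of radius comparable to $h_K$ is one of finitely many similar shapes. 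Points of $\deO$ not in $\deK$ lie at distance at least $c'h_K$ from $K$ except near $\deK\cap\deO$. Hence either $\delta(\bx)\ge c'h_K\ge c''\dist(\bx,\deK\cap\deO)$, or the nearest boundary point lies near $\deK\cap\deO$. If this verification is awkward, the fallback is to state the lemma with $\dist(\bx,\deK\cap\deO)$ in place of $\delta$, or to note that $\Xi_i$ was chosen precisely so that this comparability holds.

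Two further checks remain. First, $\widehat w$ must lie in the admissible class of Assumption~\ref{ass:H22}: $\widehat w\in H^1\OO$ and $\Delta\widehat w\in L_2\OO$ follow from $w\in V^\mu$, and the weighted Hessian bound follows from the display above together with the comparability step. Second, note that the right-hand side of \eqref{e:H22approxK} as printed contains $\|\Delta w\|_{L_2\OO}$. The scaling argument actually yields the sharper $\|\Delta w\|_{L_2(K)}$, which is trivially bounded by the $\Omega$ norm.
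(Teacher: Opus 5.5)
Your scaling argument is the same as the paper's. The paper's proof is only the remark that \eqref{e:H22approxK} follows from \eqref{e:H22approxO} by pulling back with the similarity $S$, $S(\Xi_i)=\deK\cap\deO$, and scaling norms. Your bookkeeping of the powers of $\rho=h_K/2$ is correct.

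You also catch two things that one-line proof passes over. First, scaling yields $\|\Delta w\|_{L_2(K)}$, which is what the proof of Theorem~\ref{thm:Rates} actually uses. Second, and more importantly, scaling yields the weight $\dist(\cdot,\deK\cap\deO)^\mu$, which dominates $\delta^\mu$ on $K$. So a reverse comparability $\dist(\bx,\deK\cap\deO)\le c\,\delta(\bx)$ for $\bx\in K$, with $c$ independent of $K$, is genuinely needed. It is needed both for the final bound and to know that $w\circ S$ is admissible in Assumption~\ref{ass:H22}.

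\textbf{Where your sketch fails.} Your argument for this comparability does not close. A compactness argument over finitely many configurations at scale $h_K$ only controls points at distance $\gtrsim h_K$ from the points $\bp$ where $\deO$ leaves $\deK$. These are the endpoints of the boundary faces, or the isolated contact vertices in the cases $\Xi_3,\dots,\Xi_6$. As $\bx\to\bp$, both $\delta(\bx)$ and $\dist(\bx,\deK\cap\deO)$ tend to $0$. Knowing that the nearest point of $\deO$ ``lies near $\deK\cap\deO$'' gives no control of their ratio, because that point may lie on an arc of $\deO\setminus\deK$ issuing from $\bp$. What is missing is the scale-invariant bound $\dist(\bx,\deO\setminus\deK)\ge c\,|\bx-\bp|$ for $\bx\in K$ near $\bp$.

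\textbf{How to close it.} A cone argument gives this bound. Each such $\bp$ is a tip of $K$ sitting at a re-entrant corner of $\deO$. Near $\bp$, $K$ lies in the cone of aperture $2\pi/3$ spanned by the chords of its two faces at $\bp$, since a snowflake lies in the convex hull of its six tips. Each Koch arc of $\deO$ issuing from $\bp$ lies within angle $\pi/6$ of its chord. One checks on $\cT_1$, and then for every $K$ by self-similarity, that the cone containing $K$ is separated by an angle $\pi/3$ from the cones containing the arcs not in $\deK$. Hence near $\bp$,
$\delta(\bx)\ge\min\{\dist(\bx,\deK\cap\deO),\tfrac{\sqrt3}{2}|\bx-\bp|\}\ge\tfrac{\sqrt3}{2}\dist(\bx,\deK\cap\deO)$.
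Alternatively, the homothety of ratio $1/3$ centred at $\bp$ leaves $K$ and $\deO$ invariant near $\bp$, so the ratio only needs to be bounded on a compact annulus.

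Your fallback of replacing $\delta$ by $\dist(\cdot,\deK\cap\deO)$ would prove a weaker statement than the one claimed. A minor point: the $D^2$ in \eqref{e:H22mu} counts the mixed derivative once, so it is rotation invariant only up to a factor $\sqrt2$. That factor and the comparability constant change $C_V$, which is harmless.
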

\begin{proof}
As observed before Assumption~\ref{ass:H22}, 
for every mesh element with $\delta_K=0$, there is a similarity $S:\Omega\to K$ such that $S(\Xi_i)=\deK\cap\deO$, for some $i\in\{1,\ldots,6\}$.
Hence \eqref{e:H22approxK} follows from \eqref{e:H22approxO} by a norm scaling argument.
\end{proof}

         The power $h_K^{2-2\mu}$ in \eqref{e:H22approxK} is what is expected from the scaling of the norms and the weight $\delta^\mu$.
For elements with $\delta_K>0$, inequality \eqref{e:H22approxK} follows from \eqref{e:H2approx} with 
  $C_V=(\frac{36}7)^\mu C_{H^2}$, 
since, for these elements, it holds that $h_K\le \frac6{\sqrt7}\delta_K$.
Moreover, for $\mu=0$, $V^\mu=H^2\OO$ and Lemma~\ref{cor:H22} 
 is implied by 
Lemma~\ref{lem:H2approx}.

\begin{rem}[Sufficient conditions for Assumption~\ref{ass:H22}: compact embeddings of weighted spaces]
Assumption~\ref{ass:H22} would follow exactly from the same abstract Poincar\'e-inequality argument used in the proof of Lemma~\ref{lem:H2approx}, if the embeddings   \label{rem:Assumption}
\begin{align}
 \label{e:Comp3to6}
 \big\{v\in H^1\OO:\;\Delta v\in L^2(\Omega) \text{ and } \dist(\bx,\Xi_i)^\mu\, D^2v\in L_2\OO\big\}
\ \subset\ H^1\OO
 \end{align} 
are compact for $i=1,\ldots,6$. 
It is easy to prove, using a cut-off argument, that the compactness 
of the embeddings \eqref{e:Comp3to6} for $i=3,4,5,6$ (isolated points) are all equivalent. 
Furthermore, 
since $\Xi_i\subset\deO$, for all $\bx\in\Omega$ we have $\dist(\bx,\Xi_i)\ge\delta(\bx)$ and $\{v\in H^1\OO:\;\Delta v\in L^2(\Omega) \text{ and } \dist(\bx,\Xi_i)^\mu\, D^2v\in L_2\OO\}\subset 
V^\mu \subset H^{2,2}_\mu\OO$. 
Thus,  Assumption~\ref{ass:H22} holds if the embedding 
 $V^\mu\subset H^1(\Omega)$ 
is compact. 
A sufficient condition for the latter is that 
the embedding  
$H^{2,2}_{\mu}(\Omega)\subset H^1(\Omega)$ 
 is compact. 
Lemma~\ref{lem:Embed} in Appendix~\ref{app:A} implies this result for $\mu<1-\frac d2\approx0.369$, as stated in Proposition~\ref{prop:SmallMu} below.
As far as we are aware, establishing the sharpness of the upper bound $\mu<1-\frac d2$ is an open problem.
         
              \end{rem}

  \begin{prop}\label{prop:SmallMu}
Assumption~\ref{ass:H22} holds for $\mu<1-\frac d2\approx0.369$.
\end{prop}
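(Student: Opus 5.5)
The plan is to follow the reduction in Remark~\ref{rem:Assumption}. By that remark it suffices to show that the embedding $H^{2,2}_\mu(\Omega)\subset H^1(\Omega)$ is compact for $\mu<1-\frac d2$. We will assume Lemma~\ref{lem:Embed} in Appendix~\ref{app:A} provides exactly this compactness in that range of $\mu$. For $v$ in the space appearing in \eqref{e:Comp3to6}, the inequality $\dist(\bx,\Xi_i)\ge\delta(\bx)$ and the inclusion chain stated in the remark give $v\in V^\mu\subset H^{2,2}_\mu(\Omega)$. They also give the bound
\[
\|v\|_{H^{2,2}_\mu(\Omega)}\le\big(\|v\|_{H^1(\Omega)}^2+\|\dist(\bx,\Xi_i)^\mu D^2v\|_{L_2(\Omega)}^2\big)^{1/2}.
\]
Hence each embedding \eqref{e:Comp3to6}, $i=1,\ldots,6$, factors as a bounded map followed by a compact one, and so is compact.

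Next, fix $i$ and set $X:=\{v\in H^1(\Omega):\Delta v\in L_2(\Omega),\ \dist(\bx,\Xi_i)^\mu D^2v\in L_2(\Omega)\}$, with norm $\|v\|_X^2:=\|v\|_{H^1}^2+\|\dist(\bx,\Xi_i)^\mu D^2v\|_{L_2}^2+\|\Delta v\|_{L_2}^2$. Completeness of $X$ follows because $D^2$ and $\Delta$ are closed operators in the distributional sense, so Cauchy sequences converge in every component. We then apply the abstract Poincar\'e lemma \cite[Lemma~4.1.3]{ziemer} exactly as in the proof of Lemma~\ref{lem:H2approx}, with $X_0=H^1(\Omega)$, $X$ as above, $Y=\IP^1(\Omega)$, and seminorm $\|v\|_1=(\|\dist(\bx,\Xi_i)^\mu D^2v\|^2_{L_2}+\|\Delta v\|^2_{L_2})^{1/2}$.

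The lemma's hypotheses need checking. First, $\|v\|_X$ must be equivalent to $\|v\|_{X_0}+\|v\|_1$, which holds by construction. Second, the embedding $X\subset X_0$ must be compact, which was shown above. Third, the kernel of $\|\cdot\|_1$ must be $\IP^1$: if $\|v\|_1=0$ then $D^2v=0$ almost everywhere on the connected open set $\Omega$, so $v$ is affine, and the converse is clear. The lemma then yields $\inf_{P\in\IP^1}\|v-P\|_{H^1(\Omega)}\le C\|v\|_1$, which is \eqref{e:H22approxO}. Taking $C_V$ as the maximum over $i=1,\ldots,6$ completes the argument.

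The main obstacle is the compactness of $H^{2,2}_\mu(\Omega)\subset H^1(\Omega)$, which is deferred to Lemma~\ref{lem:Embed}. If that lemma had to be proved directly, the first approach would be a boundary-layer argument. Cover $\Omega_\epsilon:=\{\delta<\epsilon\}$ by Whitney-type similar copies of the snowflake, namely the mesh elements. On each piece, apply a local Poincar\'e inequality and scaling to bound $\|\nabla v\|_{L_2(\Omega_\epsilon)}$. The number of elements of size $3^{-k}$ near $\partial\Omega$ grows like $4^k=3^{kd}$, which gives a geometric sum of size $\epsilon^{\,2-2\mu-d}\|v\|^2_{H^{2,2}_\mu}$; this tends to zero exactly when $\mu<1-\frac d2$. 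Combined with Rellich compactness on the interior set $\{\delta>\epsilon\}$, this gives the required compactness.
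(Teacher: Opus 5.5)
Your proof is correct and follows the paper's own argument. You use Remark~\ref{rem:Assumption} to reduce Assumption~\ref{ass:H22} to the compactness of $H^{2,2}_\mu(\Omega)\subset H^1(\Omega)$, invoke Lemma~\ref{lem:Embed}, and usefully check the hypotheses of the abstract Poincar\'e lemma (completeness of $X$, norm equivalence, kernel equal to $\IP^1$), which the paper leaves implicit in the remark.

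Your closing boundary-layer sketch is not needed, and it differs from the paper's proof of Lemma~\ref{lem:Embed}, which uses H\"older's inequality with the tube estimate $|\{\delta<\epsilon\}|\lesssim\epsilon^{2-d}$ to land in $W^2_p(\Omega)$ and then extension and Rellich. As written, your sketch would also need a chaining argument to control the elementwise means of $\nabla v$, since a local Poincar\'e inequality only bounds their oscillation.
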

\begin{proof}
As explained in Remark \ref{rem:Assumption}, Assumption~\ref{ass:H22} holds if the embedding $H^{2,2}_{\mu}(\Omega)\subset H^1(\Omega)$ is compact. That the latter holds for $\mu<1-\frac d2$ is proved in Lemma \ref{lem:Embed}.
\end{proof}

 The next theorem summarises the a priori error estimate obtained under Assumption~\ref{ass:H22}.
Recall that $\delta$ and $\delta_K$ are defined in \eqref{e:delta} and \eqref{e:deltaK}.

\begin{thm}[A priori error estimate]\label{thm:Rates}
Let $f\in L_2(\Omega)$, $p\in\N$, and $\cT$ be an LQU mesh as in \S\ref{ss:Mesh}.
Let $0\le\mu<1$ be such that the solution $u$ of the BVP \eqref{e:BVP101} belongs to $H^{2,2}_\mu(\Omega)$ and Assumption~\ref{ass:H22} holds.
Then the unique solution $u\DG\in V_h$ of the SIP-DG-FEM \eqref{e:DG}  satisfies
\begin{align}\label{e:Rates}
\|u-u\DG\|\DG \le C
    \bigg[
\sum_{K\in\cT}h_K^2 \|\Delta u\|_{L_2(K)}^2 
+\sum_{\substack{K\in\cT,\\ \delta_K=0}} h_K^{2-2\mu}\|\delta^\mu D^2 u\|_{L_2(K)}^2
+\sum_{\substack{K\in\cT,\\ \delta_K>0}} h_K^2\|D^2 u\|_{L_2(K)}^2
\bigg]^{1/2},
\end{align}
where $C$ only depends on $p$.
 \end{thm}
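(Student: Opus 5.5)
The plan is to combine the quasi-optimality bound \eqref{e:QO} of Theorem~\ref{thm:QO} with the elementwise bound on the DG$^+$ norm from Lemma~\ref{lem:DGpvsSobol}, and then use the local approximation estimates of Lemmas~\ref{lem:H2approx} and~\ref{cor:H22} on each element.

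First we check that Theorem~\ref{thm:QO} applies. Since $-\Delta u=f\in L_2(\Omega)$ and $u\in H^{2,2}_\mu(\Omega)$, we have $u\in V^\mu$. Fix any $q>1$ satisfying \eqref{e:qmu}; this is possible for every $\mu\in[0,1)$. Then \eqref{e:QO} gives $\|u-u\DG\|\DG\le(1+2\Csip)\|u-v_h\|\DGp$ for any $v_h\in V_h$, where $\Csip$ depends only on $p$ and $q$. Since $q$ is a fixed function of $\mu$, the constant depends only on $p$ up to this choice.

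Next we build $v_h$ elementwise. On each $K\in\cT$ we take $v_h|_K=P_K\in\IP^1(K)\subset\IP^p(K)$ (using $p\ge1$), chosen as a minimiser or near-minimiser in \eqref{e:H2approx} when $\delta_K>0$, and in \eqref{e:H22approxK} when $\delta_K=0$. The function $w:=u-v_h$ lies in $V^\mu+V_h$, so Lemma~\ref{lem:DGpvsSobol} bounds $\|w\|\DGp^2$ by the elementwise sum of four kinds of term:
\begin{itemize}
\item the quantities $h_K^{-2}\|w\|_{L_2(K)}^2+\|\nabla w\|_{L_2(K)}^2$, which are controlled directly by the approximation lemmas;
\item $h_K^2\|\Delta w\|^2_{L_2(K)}=h_K^2\|\Delta u\|^2_{L_2(K)}$, since $P_K$ is linear;
\item $\|\delta^\mu D^2w\|_{L_2(K)}=\|\delta^\mu D^2u\|_{L_2(K)}$ on boundary-adjacent elements, again because $D^2P_K=0$;
\item $\|D^2w\|_{L_2(K)}=\|D^2u\|_{L_2(K)}$ on interior elements.
\end{itemize}
Using linear $P_K$ is convenient precisely because $D^2$ and $\Delta$ annihilate it.

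Summing, the interior-element contributions are bounded via \eqref{e:H2approx} by $C_{H^2}h_K^2\|D^2u\|^2_{L_2(K)}$. The boundary-adjacent contributions are bounded via \eqref{e:H22approxK} by $C_V\bigl(h_K^{2-2\mu}\|\delta^\mu D^2u\|^2_{L_2(K)}+h_K^2\|\Delta u\|^2_{L_2(K)}\bigr)$. These collect exactly into the bracket of \eqref{e:Rates}.

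There is a slight mismatch to tidy up. The statement of Lemma~\ref{lem:DGpvsSobol} as printed shows the $D^2$ terms unsquared, but its proof yields squared norms, and we would use the squared version. Likewise, \eqref{e:H22approxK} writes $\|\Delta w\|_{L_2(\Omega)}$, which should be read as $\|\Delta w\|_{L_2(K)}$ by the scaling argument in its proof. Otherwise, summing it over boundary elements would give a global term multiplied by the number of boundary elements, which is not mesh-independent.

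The main obstacle is exactly this local form of \eqref{e:H22approxK}: the $\Delta$ term must be localised to $K$. I would re-derive it from \eqref{e:H22approxO} under the similarity $S:\Omega\to K$. Writing $\widehat w=w\circ S$, we have $\|\Delta\widehat w\|_{L_2(\Omega)}^2=h_K^{2}\|\Delta w\|^2_{L_2(K)}$ up to a fixed factor (with $h_\Omega=2$). For the weight, $\dist(S\hat\bx,S\Xi_i)\ge\delta(S\hat\bx)$ holds only in the wrong direction, so the weight should be compared as $\dist(\bx,\deK\cap\deO)\ge\delta(\bx)$; this too goes the wrong way. We therefore need $\dist(\bx,\deK\cap\deO)\le C\,\delta(\bx)$ for $\bx\in K$, up to sets away from $\deO$ where the weight is comparable to $h_K$. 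This follows from the geometry: points of $K$ close to $\deO$ are close to $\deK\cap\deO$. With that comparison, and since the final constants do not depend on $K$ (there are only six reference configurations), the bound \eqref{e:Rates} follows with $C$ depending only on $p$ (and the fixed choice of $q$).
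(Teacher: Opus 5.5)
Your proposal is correct and follows essentially the same route as the paper: build $v_h$ elementwise from the $\IP^1$ best approximations of Lemmas~\ref{lem:H2approx} and~\ref{cor:H22}, insert it into the quasi-optimality bound \eqref{e:QO}, and apply Lemma~\ref{lem:DGpvsSobol}, using that $\Delta$ and $D^2$ annihilate each linear $P_K$. Your side remarks are also accurate: the $D^2$ terms in the statement of Lemma~\ref{lem:DGpvsSobol} should be squared, and the $\Delta$ term in \eqref{e:H22approxK} should be taken over $L_2(K)$, as the paper's proof in fact uses. Likewise, the scaling argument behind Lemma~\ref{cor:H22} tacitly needs $\dist(\bx,\deK\cap\deO)\lesssim\delta(\bx)$ on $K$. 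None of this changes the argument, since the theorem's proof may simply cite Lemma~\ref{cor:H22}.
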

\begin{proof}
Let $u\in V^\mu$ be the solution of the BVP \eqref{e:BVP101} and $u\DG$ the solution of the SIP-DG formulation~\eqref{e:DG}.
Let $\Pi_\cT u\in \IP^1(\cT)\subset V_h$ be defined elementwise by $(\Pi_\cT u)|_K=P_K$ with $P_K$ as in \eqref{e:H2approx} if $\delta_K>0$, and  $P_K$ as in \eqref{e:H22approxK} if $\delta_K=0$.
Then, collecting 
the quasi-optimality Theorem~\ref{thm:QO}, Lemma~\ref{lem:DGpvsSobol}, Lemma~\ref{lem:H2approx}, and Lemma~\ref{cor:H22},
 and using that $\Delta\Pi_\cT u=D^2\Pi_\cT u=0$ in each element, 
we control the SIP-DG-FEM error as:
\begin{align*}
&\|u-u\DG\|\DG^2\\
&\overset{\eqref{e:QO}}\le (1+2\Csip)^2 \|u-\Pi_\cT u\|\DGp^2\\
     &\overset{\eqref{e:DGpvsSobol}}\le(1+2\Csip)^2(1+24C_\Tr^2)(1+36\widetilde C_q^2)
\bigg[\sum_{K\in \cT }\Big( \frac1{h_K^2}\|u-P_K\|_{L_2(K)}^2 + \|\nabla (u-P_K)\|_{L_2(K)}^2
\\
&\hspace{10mm}
+h_K^2 \|\Delta u\|_{L_2(K)}^2 \Big)
+\sum_{K\in\cT,\, \delta_K=0} h_K^{2-2\mu}  \|\delta^\mu D^2 u\|_{L_2(K)}^2
+\sum_{K\in\cT,\, \delta_K>0} h_K^2   \|D^2 u\|_{L_2(K)}^2
\bigg]
\\
&\overset{\eqref{e:H2approx},\eqref{e:H22approxK}}
\le(1+2\Csip)^2(1+24C_\Tr^2)(1+36\widetilde C_q^2)(1+\max\{C_{H^2},C_V\}  )\\
&\hspace{10mm}\cdot\bigg[
\sum_{K\in\cT}h_K^2 \|\Delta u\|_{L_2(K)}^2 
+\sum_{K\in\cT,\, \delta_K=0} h_K^{2-2\mu}  \|\delta^\mu D^2 u\|_{L_2(K)}^2
+\sum_{K\in\cT,\, \delta_K>0} h_K^2   \|D^2 u\|_{L_2(K)}^2
\bigg]
       .\end{align*}
\end{proof}

\begin{rem}[Comments on Theorem~\ref{thm:Rates}]
\label{rem:RatesComments}
We highlight the following aspects of Theorem~\ref{thm:Rates}:
\begin{enumerate}
\item A key point of the theorem is the interplay of two conditions on~$\mu$: the regularity $u\in H^{2,2}_\mu\OO$ and Assumption~\ref{ass:H22}.
The former requirement holds at least for $\mu>\mu_*\approx 0.828$  by \cite[Thm~4.1]{capitanelli2015weighted} and is false for $\mu=0$ in general (\!\!\cite[{Thm}~2]{NystromThesis} states that, if $f\in C^\infty_0(\Omega)$, $f\ge0$, and $f\ne0$, then $u\notin H^2(\Omega)$).
On the other hand, Assumption~\ref{ass:H22} holds for $\mu<1-\frac d2\approx0.369$ by Proposition~\ref{prop:SmallMu}.  Proving that there exists a $\mu\in(0,1)$ for which both conditions hold is an open problem.
  
\item The use of $V^\mu\cap H^1_0\OO$ in place of $V^\mu$ in the error analysis might allow us to prove Theorem~\ref{thm:Rates} without the need for Assumption~\ref{ass:H22}.
 
\item The parameter $1<q<1/\mu$ entering the definition of the DG$^+$ norm \eqref{e:Norms} (and the bounding constants in the intermediate steps) is not present in Theorem~\ref{thm:Rates}.
Given $\mu\in[0,1)$, any choice of such $q$ satisfying \eqref{e:qmu} (e.g., \ $q=\frac{1+1/\mu}2$) determines the $\|\cdot\|\DGp$ norm and all bounding constants.
 
\item The LQU assumption on the mesh implies that $\delta_K\ge \frac{\sqrt7}6h_K$ for each~$K$ with $\delta_K>0$.
So the error estimate~\eqref{e:Rates} implies the simpler bound
$$\|u-u\DG\|\DG \le 
C\big( h \|f\|_{L_2\OO} + 3 h^{1-\mu} \|\delta^\mu D^2u\|_{L_2\OO}\big), \qquad h:=\max_{K\in\cT}h_K.
 $$
This bound is relevant for globally quasi-uniform meshes such as the $\cT_\ell'$ family in \S\ref{ss:MeshSeq}.
A more refined analysis could provide estimates in terms of the number of degrees of freedom on meshes refined towards the boundary.
\end{enumerate}
\end{rem}

\section{Implementation details}
\label{s:Quad}

 We now consider the practical implementation of the SIP-DG-FEM \eqref{e:aDefDisc}--\eqref{e:DG}. 
As in the classical case, this involves selecting a basis for the numerical approximation space $V_h$, which transforms \eqref{e:DG} into a linear system of equations for the unknown solution coefficients. The entries of the system matrix and the right-hand side vector involve integrals of the basis functions and the source term $f$, which need to be computed before the linear system can be solved. In our case, this includes integrals with respect to two-dimensional Lebesgue measure over mesh elements $K\in \cT$ and wedges $\trd$ and $\tru$, all of which have fractal boundary; integrals with respect to one-dimensional Lebesgue measure over the line segments making up $\vee$ and $\wedge$; and integrals with respect to Hausdorff measure $\cH^d$ (for $d = \log(4)/\log(3)$) over fractal curves $F\in \cF$. We now explain how these integrals are computed in our implementation.

\subsection{Reference domains} \label{ss:RefDom}
         The basis for our piecewise-polynomial approximation space $V_h$ will be defined in terms of a set of basis functions on the reference Koch snowflake element~$\hatK = \Omega$, with centre at~$(0, 0)^{\top}$ and~$\diam(\hatK) = 2$.
Each element~$K \in \Th$ is obtained through a similarity~$\psi_K : \hatK \to K$ of the form
\begin{alignat}{3} \label{e:psiK}
\psi_K(\widehat{\bx}) := \frac{\hK}{2} R_K \widehat{\bx} + \bx_K,
\end{alignat}
where~$\bx_K \in \R^2$ is the barycentre of~$K$, and~$R_K\in \R^{2\times 2}$ is a rotation matrix with angle~$\theta = 0$ or~$\theta = \pi/6$. 
The reference element $\widehat{K}$ can be partitioned into six reference wedges $W_1,\ldots,W_6$, separated by six reference line segments $S_1,\ldots,S_6$, and its boundary can be partitioned into six reference Koch curves $F_1,\ldots,F_6$, as illustrated in Figure \ref{Fig2}(a). 

For a face $F\in \cF$, if $K_\pm \in \cT$ are such that $F=\partial K_-\cap\partial K_+$ with $\trd\subset K_-$ and $\tru\subset K_+$
then: 
\begin{itemize}
\item $\widehat{\trd}:=\psi_{K_-}^{-1}(\trd)\subset \widehat{K}$ is one of $W_1,\ldots,W_6$;
\item $\widehat{\vee}:=\psi_{K_-}^{-1}(\vee)$ is one of the six unions
$S_1\cup S_2,
 \ldots,
S_5\cup S_6,S_6\cup S_1$; 
\item $\widehat{F}_-:=\psi_{K_-}^{-1}(F)$ is one of $F_1,\ldots,F_6$;
\item $\widehat{\tru}:=\psi_{K_+}^{-1}(\tru)\subset \widehat{K}$ is one of the six unions $W_1\cup W_2,
 \ldots,
W_5\cup W_6,W_6\cup W_1$;
\item $\widehat{\wedge}:=\psi_{K_+}^{-1}(\wedge)$ is one of the six unions
$S_1\cup S_3,
 \ldots,
S_5\cup S_1,S_6\cup S_2$.
\item $\widehat{F}_+:=\psi_{K_+}^{-1}(F)$ is one of the six unions
$F_1\cup F_2,
 \ldots,
F_5\cup F_6,F_6\cup F_1$.
\end{itemize}
  
We also use the Koch curve $\Gamma$ (introduced in \S\ref{ss:Koch}) as our reference face, noting that for each face $F\in\cF$ there exists a similarity $\xi_{F} : \Gamma \to F$ mapping $\Gamma$ bijectively to $F$, which is of the form
\begin{equation}
\label{e:Face}
\xi_{F}(\widehat{\bx}) = \hF R_{F} \widehat{\bx} + \bx_F,
\end{equation}
for some rotation matrix~$R_{F} \in \R^{2 \times 2}$ and some translation vector~$\bx_F \in \R^2$.

\subsection{Basis for piecewise polynomial space} \label{ss:Basis}
 Let~$\Np := \dim(\mathbb{P}^p(\mathbb{R}^2))$, and let
$  \{\widehat{\phi}_i\}_{i = 1}^{\Np}$
be a basis for the space~$\mathbb{P}^p(\hatK)$. (In our implementation we use a monomial basis.) 
Let $\NT:=\card(\Th)$. 
Let $\{K_m\}_{m = 1}^{\NT}$ be a prescribed ordering of the elements in~$\Th$. For each~$m =1,\ldots \NT$, we consider the basis for the space~$\mathbb{P}^p(K_m)$ given by
 $$
 \big\{\phi_i^m \in \mathbb{P}^p(K_m) \ : \  \phi_i^m(\bx) = \hphi_i \big(\psi_{K_m}^{-1}(\bx)\big), 
\text{ for } i = 1, \ldots, \Np  \big\}.$$
 A basis for the global space~$\Vh$ can then be defined in a natural way, as 
\begin{align}\label{e:Basis}
\big\{\phi_1^1\chi_{K_1},\;\ldots,\; \phi_{\Np}^1\chi_{K_1},\; \ldots \ldots,\;
\phi_1^{\NT}\chi_{K_{\NT}},\;\ldots,\; \phi_{\Np}^{\NT}\chi_{K_{\NT}}\big\},
\end{align}
where, for $K\in\cT$, $\chi_K$ denotes the characteristic function of $K$.
 
\subsection{Linear system} \label{ss:LSE}
Let $N:=\Np\NT$ be the total number of DOFs. 
Then, denoting by~$\su \in \R^{N}$ and~$\sfb \in \R^{N}$ the vector representation of~$\uh$ and the linear functional~$\cL(\cdot)$, respectively, the SIP-DG-FEM~\eqref{e:DG} is equivalent to the linear system
\begin{equation*} \text{find~$\su\in \R^{N}$ such that \ $\ADG \,\su = \sfb$,}
\end{equation*}
where $\ADG\in \R^{N\times N}$ is the Galerkin matrix associated with the bilinear form~$\aDG(\cdot, \cdot)$. Recalling the definition~\eqref{e:aDefDisc}, we can write $\ADG$ as
 \begin{equation}\label{e:ADG}
\ADG = \sG + \sC + \sC^{\top}+ \sP,
\end{equation}
where $\sG$, $\sC$, and~$\sP$ are the matrices associated with the bilinear forms defined, for $\wh, \vh \in \Vh$, by
 \begin{alignat*}{3}
\gh(\wh, \vh) & := \sum_{K \in \Th} \int_K \nabla \wh \cdot \nabla \vh \dx, \\
 \ch(\wh, \vh) & := -\frac12 \sum_{F \in \cF_I} \Big[\cI_\trd(\wh^-,\vh^- - \vh^+)
  -\cI_\tru(\wh^+ , \vh^- - \vh^+)\Big] - \sum_{F \in \cF_B} \cI_{\trd}(\wh, \vh), \\
 \ph(\wh, \vh) & := \sum_{F \in \cF_I} \frac{\eta}{\hF^d} \int_F (\wh^- - \wh^+) (\vh^- - \vh^+) \dH^d + \sum_{F \in \cF_B} \frac{\eta}{\hF^d} \int_F \wh \vh \dH^d.
\end{alignat*}
 
The ordering of the basis in \eqref{e:Basis} implies that $\sG$, $\sC$, and~$\sP$ are $\NT\times\NT$ block matrices, with $\Np\times\Np$ blocks.  
Likewise, the load vector~$\sfb$ is an $\NT\times 1$ block vector, with $\Np\times 1$ blocks. 
 We now consider the block-wise assembly of $\sG$, $\sC$, $\sP$ and $\mathsf{b}$, using integration formulas and quadrature rules on the reference domains.

\subsection{Integration over reference domains} 
 
As we shall explain below, almost all of the integrals over fractal domains arising in our formulation can be 
expressed in terms of a finite set of canonical integrals of polynomial basis functions (monomials, in our implementation) over either the reference element $\widehat{K}=\Omega$, one of the six wedges $W_1,\ldots,W_6$, 
 or the reference face $\Gamma$. These canonical integrals can be evaluated exactly using self-similarity arguments similar to those used in \cite{barnsley1985iterated, vrscay1989iterated,strichartz2000evaluating},   combined with symmetry arguments in the case of the wedges $W_1,\ldots,W_6$. 
The resulting integration formulas, and some details of their derivations, are presented in Appendix \ref{app:Integration} for the cases required to allow us to implement our SIP-DG-FEM for~$p = 1$ and~$p = 2$.
The derivation of analogous formulas permitting the implementation of our method for higher $p$ values is straightforward, but is left for future work. 
Application of our integration formulas to a given polynomial requires  to express the polynomial as a linear combination of the (monomial) basis to which the integration formulas apply. In some instances this requires a small linear solve to find the basis coefficients.

\subsection{Assembly of~\texorpdfstring{$\sG$}{G}}  \label{ss:G}
 
The matrix~$\sG$ has a block-diagonal structure.
Using the fact that~$R_K^{\top} R_K$ is the identity matrix and~$\det((\hK/2) R_K) = \hK^2/4$ for each $K\in\cT$, the $m$th diagonal block of~$\sG$ can be computed as follows (with $i,j\in\{1,\ldots,\Np\}$):
\begin{alignat}{3}
\label{e:G-diag}
( \sG_{m m})_{ij} =  \int_{K_{m}} \nabla \phi_i^{m} \cdot  \nabla \phi_j^{m} \dx  & =  \int_{\hatK} \nabla \hphi_i \cdot \nabla \hphi_j \dhx .\end{alignat}
The right-hand side of \eqref{e:G-diag} involves integrals of polynomials of degree at most~$2(p-1)$ over the reference element $\hatK$, which can be evaluated (for~$p = 1$ and~$p = 2$) using the integration formulas in  Lemma~\ref{lem:SnowQuad}.
Note that the right-hand side of \eqref{e:G-diag} is independent of $m$, and hence only needs to be evaluated once for each pair $(i,j)$. Similar comments apply to later formulas, e.g.\ \eqref{e:C-mm}.

\subsection{Assembly of~\texorpdfstring{$\sC$}{C}} \label{ss:C}
 Each internal face~$F \in \cF_I$ shared by the elements~$K_{m}$ and~$K_n$ in~$\Th$, where we assume that~$\trd \subset K_{m}$ and~$\tru \subset K_n$, makes the following contributions to the blocks $\sC_{mm}$, $\sC_{mn}$, $\sC_{nm}$ and $\sC_{nn}$ of the consistency matrix~$\sC$:
  \begin{alignat*}{3}
(\sC_{mm}^{(F)})_{ij} & = -\frac12 \cI_{\trd}(\phi_j^{m}, \phi_i^{m}) ,  & & \quad 
(\sC_{m n}^{(F)})_{ij} = \frac12 \cI_{\tru} (\phi_j^{n}, \phi_i^{m}), \\
 (\sC_{n m}^{(F)})_{ij} &  = \frac12 \cI_{\trd} (\phi_j^{m}, \phi_i^{n}) , & & \quad
(\sC_{n n}^{(F)})_{ij} = -\frac12 \cI_{\tru} (\phi_j^{n}, \phi_i^{n}).
\end{alignat*}

Recalling \eqref{e:IdownIup}, the block-diagonal contribution~$\sC_{mm}^{(F)}$ can be split into a sum of three terms as
\begin{alignat}{3}
 (\sC_{m m}^{(F)})_{ij} & = -\frac12 \int_{\trd} \dive\big[\phi_i^{m} \nabla \phi_j^{m}\big] \dx  + \frac12 \int_{\vee} \phi_i^{m} \nabla \phi_j^{m} \cdot \bn_{\vee} \ds   \nonumber\\
   & = -\frac12 \int_{\trd} \nabla \phi_i^{m} \cdot \nabla \phi_j^{m} \dx - \frac12 \int_{\trd} \phi_i^{m} \Delta \phi_j^{m} \dx + \frac12 \int_{\vee} \phi_i^{m} \nabla \phi_j^{m} \cdot \bn_{\vee} \ds  \nonumber\\
  \label{e:C-mm}
 & = -\frac12 \int_{\widehat{\trd}} \nabla \hphi_i \cdot \nabla \hphi_j \dhx - \frac12 \int_{\widehat{\trd}} \hphi_i \Delta \hphi_j \dhx
 + \frac12 \int_{\widehat{\vee}} \hphi_i \nabla \hphi_j \cdot \bn_{\widehat{\vee}} \dhS,
  \end{alignat}
where $\widehat{\trd}=\psi_{K_m}^{-1}(\trd)$ is the reference wedge and 
$\widehat{\vee}=\psi_{K_m}^{-1}(\vee)$ 
is the corresponding union of reference line segments.  
           The first two terms in \eqref{e:C-mm} require integration formulas for polynomials of degree at most~$2(p - 1)$ on $\widehat{\trd}$, 
   which are provided in Lemma~\ref{lem:WedgeQuad} for the cases $p=1$ and $p=2$. 
 To simplify the third term in~\eqref{e:C-mm}, we have used the identities~$\nabla \phi_j^m = (2/h_{K_m})R_{K_m} \nabla \hphi_j$, $\bn_{\vee} = R_{K_m} \bn_{\widehat{\vee}}$, and~$\ds = (h_{K_m}/2) \dhS$.
This term involves integrals of polynomials of degree at most $2p-1$ over $\widehat{\vee}$, which can be computed exactly using standard Gauss--Legendre quadrature rules on the line segments making up $\widehat{\vee}$.  
The block-diagonal contribution~$\sC_{nn}^{(F)}$ can be computed analogously, with $\trd$, $\widehat{\trd}$, $\vee$, and $\widehat{\vee}$ replaced by $\tru$, $\widehat{\tru}$, $\wedge$, and $\widehat{\wedge}$. 

The computation of the off-diagonal contributions~$\sC_{nm}^{(F)}$ is more involved, since it requires the extension
to the wedge~$\trd \subset K_{m}$ of the basis functions $\{\phi_i^n\chi_{K_n}\}_{i=1}^{\Np}$, which are supported in $K_n$.  
More precisely, the contribution~$\sC_{nm}^{(F)}$ can be split into three terms as follows: 
\begin{alignat}{3}
\nonumber
(\sC_{nm}^{(F)})_{ij}&  = \frac12 \int_{\trd} \dive \Big[\phi_i^n \nabla \phi_j^{m}\Big] \dx - \frac12 \int_{\vee} \phi_i^{n} \nabla \phi_j^{m} \cdot \bn_{\vee} \ds\\
   \nonumber
 & = \frac12 \int_{\trd} \nabla \phi_i^{n} \cdot \nabla \phi_j^{m} \dx + \frac12 \int_{\trd} \phi_i^{n} \Delta \phi_j^{m} \dx - \frac12 \int_{\vee} \phi_i^{n} \nabla \phi_j^{m} \cdot \bn_{\vee} \ds  \\
   \nonumber
 & = 
  \frac{\sqrt{3}}{2}
 \int_{\widehat{\trd}} {R_{K_n} \nabla \hphi_i \big((\psi_{K_n}^{-1} \circ \psi_{K_{m}} )(\widehat{\bx}) \big)\cdot R_{K_{m}} \nabla \hphi_j}  \dhx 
 \\
 \nonumber
 & \quad + \frac12\int_{\widehat{\trd}} \hphi_i\big((\psi_{K_n}^{-1}\circ \psi_{K_{m}})(\widehat{\bx})\big) \Delta \hphi_j \dhx  \\
 \label{e:C-mn}
 & \quad - 
    \frac12 \int_{\widehat{\vee}} \hphi_i \big((\psi_{K_n}^{-1} \circ \psi_{K_{m}})(\widehat{\bx})\big) \nabla \hphi_j \cdot \bn_{\widehat{\vee}} \dhS,
\end{alignat}
 where we used the fact that the ratio~$h_{K_m}/h_{K_{n}} = \sqrt{3}$ for an LQU mesh. 
  As before, calculation of the first two terms requires formulas for the integration of polynomials of degree at most~$2(p - 1)$ on the reference wedge~$\widehat{\trd}$, which can be carried out using the results in Appendix~\ref{app:Integration}.
However, an important difference compared to 
the calculation of the analogous terms in 
\eqref{e:C-mm} is that in \eqref{e:C-mn} the functions $\hphi_i((\psi_{K_n}^{-1}\circ \psi_{K_{m}})(\widehat{\bx}))$ must first be expressed in the polynomial basis on the reference element before the integration formulas can be applied.
Evaluation of the third term in \eqref{e:C-mn} involves integrals of polynomials of degree at most $2p-1$ over $\widehat{\vee}$, which can be computed exactly using Gauss--Legendre rules, as for the analogous term in \eqref{e:C-mm}. 
    The off-diagonal block contribution~$\sC_{mn}^{(F)}$ can be computed analogously.

Each boundary face~$F \in \cF_B$ with~$F \subset \deK_{m}$ contributes only to the diagonal block~$\sC_{mm}$ as
\begin{equation*}
\sC_{mm}^{(F)} = - \cI_{\trd} (\phi_j^{m}, \phi_i^{m}) ,
\end{equation*}
which can be computed exactly as for the interior faces, using \eqref{e:C-mm}.

\subsection{Assembly of~\texorpdfstring{$\sP$}{P}} \label{ss:P}
 In the penalty matrix~$\sP$, each interior face~$F \in \cF_I$, shared by the elements~$K_{m}$ and~$K_{n}$ in~$\Th$ with~$\trd \subset K_{m}$ and~$\tru \subset K_n$, makes the following contributions to the blocks $\sP_{mm}$, $\sP_{mn}$, $\sP_{nm}$ and $\sP_{nn}$:
\begin{alignat*}{3}
( \sP_{mm}^{(F)} )_{ij} & = \frac{\eta}{\hF^d} \int_F \phi_i^{m} \phi_j^{m} \dH^d , & & \qquad 
(\sP_{m n}^{(F)})_{ij}  = -\frac{\eta}{\hF^d}\int_F \phi_i^{m} \phi_j^{n} \dH^d, \\
(\sP_{n m}^{(F)})_{ij} & = -\frac{\eta}{\hF^d} \int_F \phi_i^n \phi_j^{m} \dH^d , & & \qquad 
(\sP_{nn}^{(F)})_{ij} = \frac{\eta}{\hF^d} \int_F \phi_i^{n} \phi_j^{n} \dH^d .
\end{alignat*}
These contributions can be computed by expressing the integrals as integrals over the reference face $\Gamma$. 
 Special attention must be paid to the evaluation of the basis functions from the two sides of~$F$, which is a ``short face'' for~$K_{m}$ and a ``long face'' for~$K_n$ (see Figures~\ref{fig:SnowflakeBoundaryColor} and~\ref{fig:wedgesB}(a)).

Let~$\widehat{F}^{-}$ and~$\widehat{F}^+$ be the corresponding faces of the reference element~$\hatK$ such that~$\psi_{K_{m}}(\widehat{F}^-) = F$ and~$\psi_{K_n}(\widehat{F}^+) = F$, and let~$\xi_{F} : \Gamma \to F$ be the similarity defined in \eqref{e:Face}. 
     Then the four contributions above can be expressed, 
using the scaling property of $\cH^d$ \cite[Scaling Property 3.2]{Fal}, 
as
\begin{alignat*}{3}
(\sP_{mm}^{(F)})_{ij} & = \eta \int_{\Gamma} \hphi_i\big((\psi_{K_{m}}^{-1} \circ \xi_F)(\widehat{\bx})\big) \hphi_j\big((\psi_{K_{m}}^{-1} \circ \xi_F)(\widehat{\bx})\big) \dhH^d,\\
 (\sP_{mn}^{(F)})_{ij} & = - \eta \int_{\Gamma}  \hphi_i\big((\psi_{K_{m}}^{-1} \circ \xi_F)(\widehat{\bx})\big) \hphi_j\big((\psi_{K_{n}}^{-1} \circ \xi_F)(\widehat{\bx})\big) \dhH^d , \\
 (\sP_{n m}^{(F)})_{ij} & = - \eta \int_{\Gamma}  \hphi_i\big((\psi_{K_{n}}^{-1} \circ \xi_F)(\widehat{\bx})\big) \hphi_j\big((\psi_{K_{m}}^{-1} \circ \xi_F)(\widehat{\bx})\big) \dhH^d , \\
 (\sP_{nn}^{(F)})_{ij} & = \eta \int_{\Gamma} \hphi_i\big((\psi_{K_{n}}^{-1} \circ \xi_F)(\widehat{\bx})\big) \hphi_j\big((\psi_{K_{n}}^{-1} \circ \xi_F)(\widehat{\bx})\big) \dhH^d,
\end{alignat*}
     where the factor~$\hF^{-d}$ canceled out after the change of variables. 
 Since~$(\psi_{K_m}^{-1} \circ \xi_F) : \Gamma \to \widehat{F}^{-}$ and $(\psi_{K_{n}}^{-1} \circ \xi_F) : \Gamma \to \widehat{F}^{+}$, only evaluations of the basis functions~$\{\hphi_i\}$ on the faces~$\widehat{F}^{\pm}$ are required.
 The integrals on the reference faces are computed using the integration formulas in Lemma~\ref{lem:KochQuad},  after expressing the functions $\hphi_i\circ\psi_{K_{n}}^{-1}\circ \xi_F$ etc.\ in the (monomial) basis. 
 
Each boundary face~$F \in \cF_B$ with~$F \subset \deK_{m}$ contributes only to the diagonal block~$\sP_{m m}$, exactly as for $\sP_{mm}^{(F)}$ above in the interior case. 

\subsection{Assembly of \texorpdfstring{$\sfb$}{b}} \label{ss:b}
 Each element~$K_{m}$ contributes to the $m$th block $\sfb_m$ of the load vector~$\sfb$ as follows:
\begin{equation}
\label{e:b}
(\sfb_{m})_i = \int_{K_{m}} f \phi_i^{m} \dx = \Big(\frac{h_{K_{m}}}{2}\Big)^2 \int_{\hatK} f\big(\psi_{K_m}(\widehat{\bx})\big)\, \hphi_i(\widehat\bx)\, \dhx ,
\end{equation}
where the second identity is obtained by mapping to the reference element~$\hatK$. In general, the integrand in \eqref{e:b} is not a polynomial. Hence the integration formulas in Appendix \ref{app:Integration} cannot be applied. Instead, for these integrals, we use the composite barycentre quadrature rule from~\cite[\S3.1]{disjoint} on a suitably fine $\cT_\ell$ mesh of $\widehat{K}$ (in our experiments in \S\ref{s:Numer}, we use $\ell=4$).

\section{Numerical results}
\label{s:Numer}
 
We now present numerical experiments demonstrating that our SIP-DG-FEM works in practice, and assessing its stability and convergence. 
Our implementation of \eqref{e:DG} uses the quadrature described in \S\ref{s:Quad}, polynomial degrees $p=1$ and $p=2$, and the mesh sequences $\cT_\ell'$ and $\cT_{\ell,\ell^*}'$ of \S\ref{ss:MeshSeq}. In all the experiments the penalty parameter in \eqref{e:aDefDisc} is set to~$\eta = 10$. 
The SIP-DG-FEM is implemented in Matlab and the code is available at \url{https://github.com/SergioG5/FractalDG}.
  
 \subsection{Convergence for a smooth solution}
\label{s:smooth}

\begin{figure}[t]
\centering
\includegraphics[width = 0.45\textwidth]{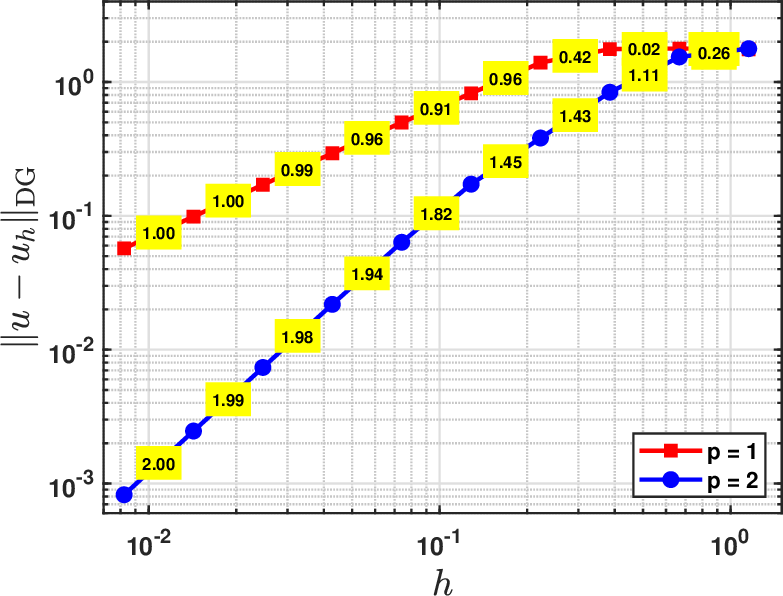}
\hspace{0.1cm}
\includegraphics[width = 0.45\textwidth]{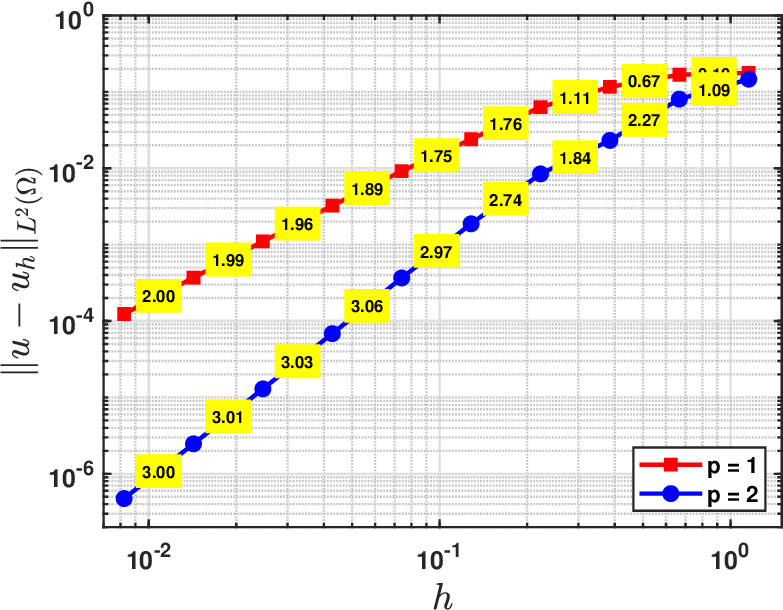}
\caption{$h$-convergence for the problem with smooth (Gaussian) solution (see \S\ref{s:smooth}) in the DG norm (left panel) and the~$L_2(\Omega)$ norm (right panel). The numbers in yellow rectangles are the associated empirical convergence rates. }
\label{fig:convergence}
\end{figure}

In our first experiment, we assess the accuracy of the proposed method in the case of a smooth solution.
We consider a manufactured problem with a source term~$f$ chosen so that the solution to~\eqref{e:BVP101} is given by the Gaussian function~$u(x, y) = \exp(-(x^2 + y^2)/\sigma^2)$, with~$\sigma = 10^{-1}$. 
This solution does not vanish exactly on $\partial\Omega$, but is numerically small there ($|u|\leq 10^{-14}$ on $\partial\Omega$).

In Figure~\ref{fig:convergence}, we show (on~\emph{log-log} axes) the errors $\|u-u_h\|$ in the DG norm~\eqref{e:Coercivity} and in the~$L_2(\Omega)$ norm for the sequence of meshes~$\{\cT_\ell'\}_{\ell = 0}^{9}$ (see Figure~\ref{fig:MeshSeq}, panels (a) and (d)--(f) for $\ell=1,\ldots,4$) and approximations of polynomial degree~$p = 1$ and~$p = 2$.
The results show optimal convergence rates of order~$\Order{h^{p}}$ and~$\Order{h^{p+1}}$ in the DG norm and the~$L_2(\Omega)$ norm, respectively.

 \subsection{Convergence for a singular solution}
\label{s:sing}

\begin{figure}[t]
\centering
 \begin{subfigure}[t]{0.45\textwidth}
\includegraphics[width = \textwidth]{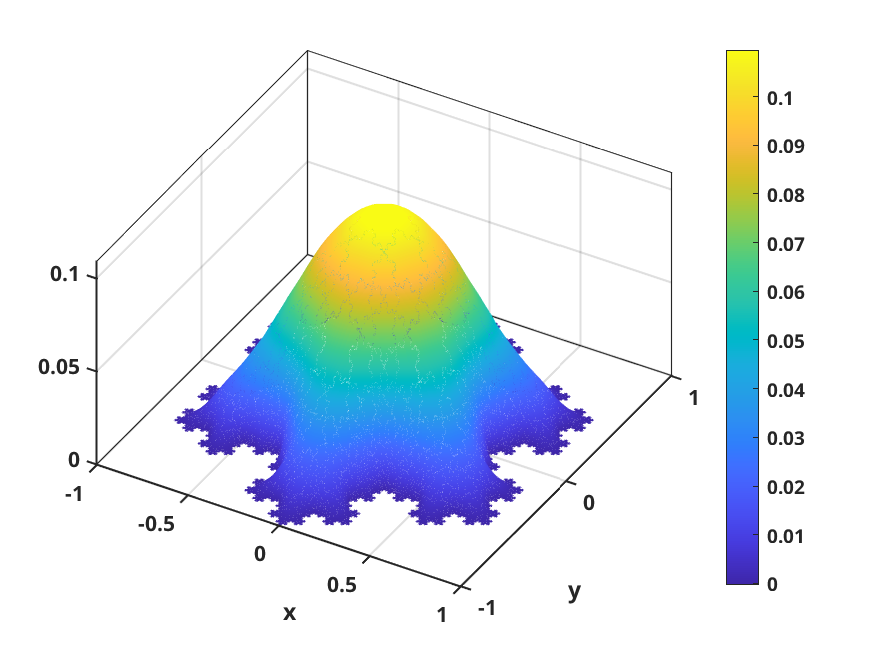}
\caption{{}}
\end{subfigure}
\hspace{0.1cm}
\begin{subfigure}[t]{0.45\textwidth}
\includegraphics[width = \textwidth]{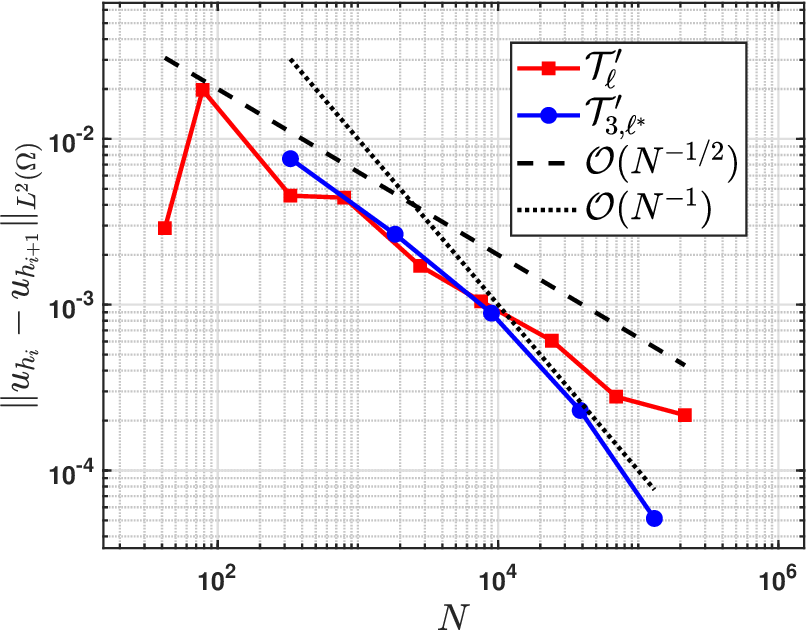}
\caption{{}}
\end{subfigure}
\caption{
(a) Discrete approximation~$\uh$ with  $p =2$ on the mesh~$\cT'_{3,5}$ for the problem with~$f = 1$ (see \S\ref{s:sing}). (b) Errors in~\eqref{e:errors-singular} computed  on the mesh sequences~$\{\cT'_{\ell}\}_{\ell = 0}^{9}$ and~$\{\cT'_{3,\ell^*}\}_{\ell^* = 0}^{5}$. The number of degrees of freedom ($N$) refers to the approximation~$u_{h_{i}}$.}
\label{fig:convergence-singular}
\end{figure}

We now consider the BVP~\eqref{e:BVP101} with~$f = 1$, for which the solution $u$ is smooth inside $\Omega$ but singular on $\partial\Omega$ (in the sense that $u\not\in H^2(\Omega)$).
In Figure~\ref{fig:convergence-singular}(a), we display the discrete approximation~$\uh$ obtained for $p = 2$ on the mesh~$\cT'_{3,5}$.

Since the continuous solution~$u$ is not known in closed form, we cannot compute $\|u-u_h\|$ in this case.
Instead, we study the following increment errors in the mesh-independent norm~$L_2(\Omega)$ computed between consecutive approximations on sequences of nested meshes~$\{\cT^i\}_{i \in \mathbb{N}}$:
\begin{equation}
\label{e:errors-singular}
 \norm{u_{h_{i}} - u_{h_{i+1}}}{L_2(\Omega)},
\end{equation}
where $u_{h_i}$ denotes the numerical solution obtained using the mesh $\cT^i$.

In Figure~\ref{fig:convergence-singular}(b), we plot (on \emph{log-log} axes) the increment errors~\eqref{e:errors-singular} against the number of degrees of freedom~$N$ for the mesh sequences~$\{\cT'_{\ell}\}_{\ell = 0}^{9}$ and~$\{\cT'_{3,\ell^*}\}_{\ell^* = 0}^{5}$, using polynomial approximations of degree~$p = 2$. 
 For the sequence~$\{\cT'_{\ell}\}_{\ell = 0}^9$ of quasi-uniform meshes, a decay of approximate order~$\mathcal{O}(N^{-1/2})$ is observed.
A faster decay of approximate order~$\mathcal{O}(N^{-1})$ is observed for the sequence~$\{\cT'_{3,\ell^*}\}_{\ell^* = 0}^{5}$ of meshes refined towards the boundary.

 \subsection{Conditioning of the Galerkin matrix}
\label{s:cond}
 
\begin{figure}[!t]\centering
\includegraphics[width = 0.5\textwidth]{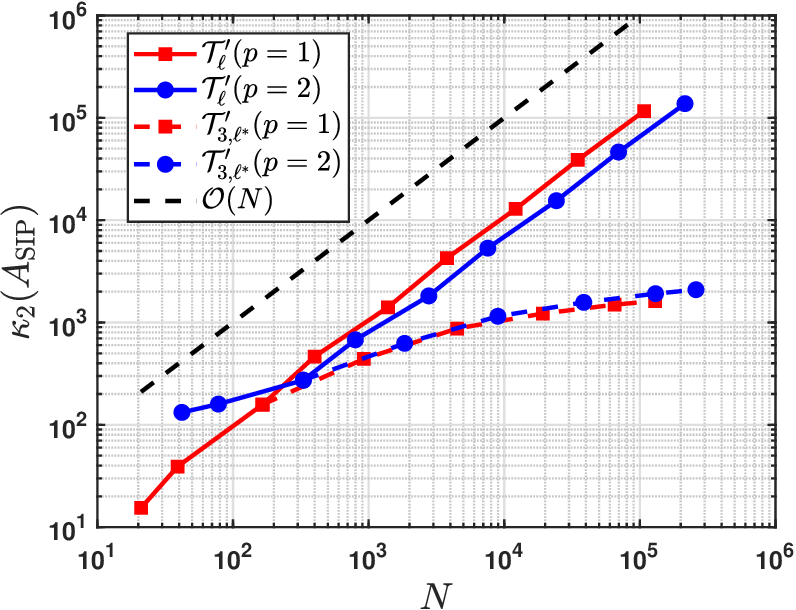}
\caption{Condition number of the Galerkin matrix~$\ADG$ for the sequence of quasi-uniform meshes~$\{\cT_\ell'\}_{\ell = 0}^{8}$ and the sequence of boundary-refined meshes~$\{\cT'_{3, \ell^*}\}_{\ell^* = 0}^5$ (see \S\ref{s:cond}).\label{fig:conditioning}}
\end{figure}

We now study numerically the condition number of the Galerkin matrix.
In Figure~\ref{fig:conditioning}, we show (on \emph{log-log} axes) the~$2$-norm condition number of the Galerkin matrix~$\ADG$ in~\eqref{e:ADG} for the mesh sequences $\{\cT_\ell'\}_{\ell = 0}^8$ and $\{\cT'_{3, \ell^*}\}_{\ell^* = 0}^5$, with approximations of degree~$p = 1$ and~$p = 2$.
For the quasi-uniform sequence, the condition number grows like $\Order{N}$ (which corresponds to~$\Order{h^{-2}}$), as for the SIP-DG-FEM  on standard domains (see~\cite[\S3.4]{Castillo:2002}).
In contrast, for the boundary-refined meshes, a milder growth of the condition number with increasing $N$ is observed.

 \subsection{Approximation of eigenvalues}\label{ss:eigen}

Our final experiment concerns the approximation of the Dirichlet eigenvalue problem: find~$(\lambda, \phi) \in \R^+ \times H_0^1(\Omega)$ such that
\begin{alignat}{3}\label{e:eigen}
-\Delta \phi & = \lambda \phi & & \qquad \text{ in } \Omega.
\end{alignat}
A discrete approximation of the eigenpairs~$(\lambda, \phi)$ can be obtained by solving the generalized eigenvalue problem: find~$(\lambda_h, \phi_h) \in \R^+ \times \Vh$ such that
\begin{equation*}
\ADG \Phi_h = \lambda_h \sM \Phi_h,
\end{equation*}
where~$\Phi_h$ is the vector representation of~$\phi_h$, and~$\sM$ is the mass matrix (representing the~$L^2(\Omega)$ inner product) for the space~$\Vh$.

Previous numerical studies of this problem include \cite{Banjai:2007} (based on conformal mappings of a polygonal prefractal approximation of $\Omega$) and \cite{lapidus1996snowflake,Neuberger_Sibien_Swift:2006} (based on finite difference approximations on a triangular grid of points associated with a prefractal approximation). 
Since 
the numerical results in~\cite{Banjai:2007,Neuberger_Sibien_Swift:2006} are carried out on the domain~$\widetilde{\Omega} = (1/\sqrt{3}) \Omega$, we must scale our discrete eigenvalues as~$\widetilde{\lambda}_h = 3 \lambda_h$ to get results comparable to those in \cite{Banjai:2007,Neuberger_Sibien_Swift:2006}.
 
We carry out computations using both the quasi-uniform mesh~$\cT_9'$ and the boundary-refined mesh~$\cT'_{4,3}$.
In Table~\ref{tab:eigenvalues}, we show the approximation of the first 10 eigenvalues obtained using approximations of degree~$p = 2$, and compare them with the values reported in~\cite[Table 2]{Banjai:2007}. The relative errors obtained for both meshes are comparable, and are less than $1\%$ in all cases considered.
However, the boundary-refined mesh~$\cT'_{4, 3}$ has a substantially smaller number of elements, highlighting the efficiency of this type of mesh for obtaining accurate solutions with singular behaviour on $\partial\Omega$.

\begin{table}[t]
 \centering
 \caption{Comparison of the first 10 eigenvalues of \eqref{e:eigen} obtained with different meshes.\label{tab:eigenvalues}}
 \begin{tabular}{lr|rc|rc}
 \hline
 & & \multicolumn{2}{c|}{$\card(\cT'_{4, 3}) = 6793$} &  \multicolumn{2}{c}{$\card(\cT_9') = 105469$} \\
 $k$ & $\lambda_{\mathrm{ref}}^{(k)}$ \cite{Banjai:2007} & $\widetilde{\lambda}_h^{(k)}$ & Relative error &  $\widetilde{\lambda}_h^{(k)}$  & Relative error \\
 \hline
1 &	 39.348 &	 39.118 &  5.85e-03  & 39.126 & 5.64e-03\\
2 &	 97.436 &   96.892 	&  5.58e-03 &	 96.860 & 5.91e-03\\
3 &	 97.436 &	 96.921 & 5.29e-03 &  96.889 & 5.61e-03\\
4 &	 165.406 &	 164.322 &	 6.55e-03 &  164.241 & 7.04e-03 \\
5 &	 165.406 &	 164.661 & 	 4.50e-03 &  164.560 & 5.11e-03 \\
6 &	 190.370 &	 189.672 &	 3.67e-03 &  189.212 & 6.08e-03\\
7 &	 208.608 &	 207.385 &	 5.86e-03 &  207.243 &
6.54e-03\\
8 &	 272.406 &	 271.258 &	 4.21e-03 &   270.543 & 6.84e-03\\
9 &	 272.406 &	 271.491 &	 3.36e-03 & 270.763 &
6.03e-03\\
10 &	312.353 &	312.148 &	6.56e-04 & 310.650 & 5.45e-03\\
\hline
\hline
 \end{tabular}
\end{table}

In Figure~\ref{fig:eigenfunctions}, we show some of discrete eigenfunctions corresponding to the eigenvalues in  Table~\ref{tab:eigenvalues} (third column) for the mesh~$\cT'_{4, 3}$. These exhibit the same symmetries observed in~\cite[Fig. 6]{Neuberger_Sibien_Swift:2006}.
  
\begin{figure}[!t]
\centering
\subfloat[$\phi_h^{(1)}$]{
\includegraphics[width = 0.3\textwidth, clip, trim = {70 20 70 20}]{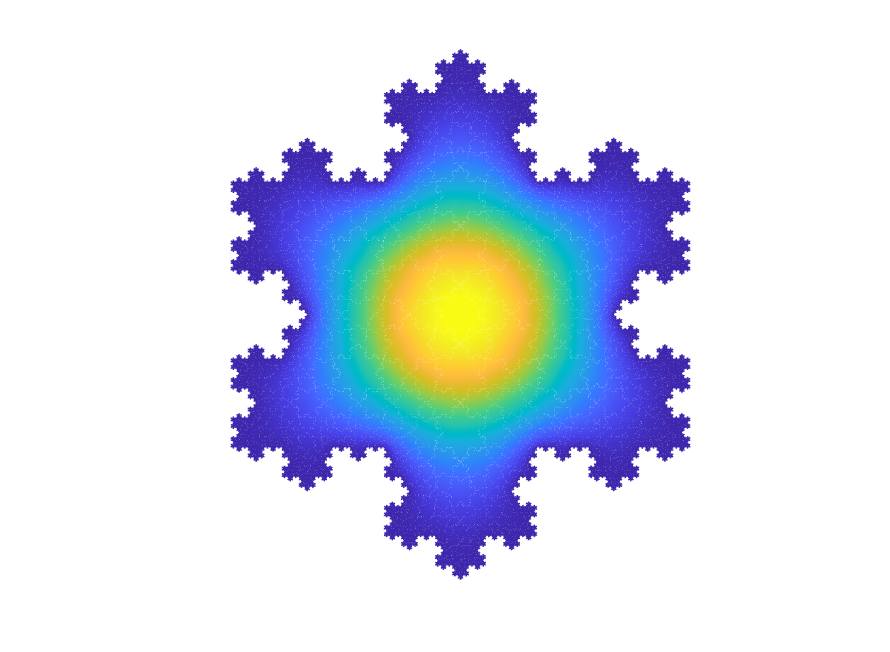}
}
\hspace{0.1cm}
\subfloat[$\phi_h^{(2)}$]{
\includegraphics[width = 0.3\textwidth, clip, trim = {70 20 70 20}]{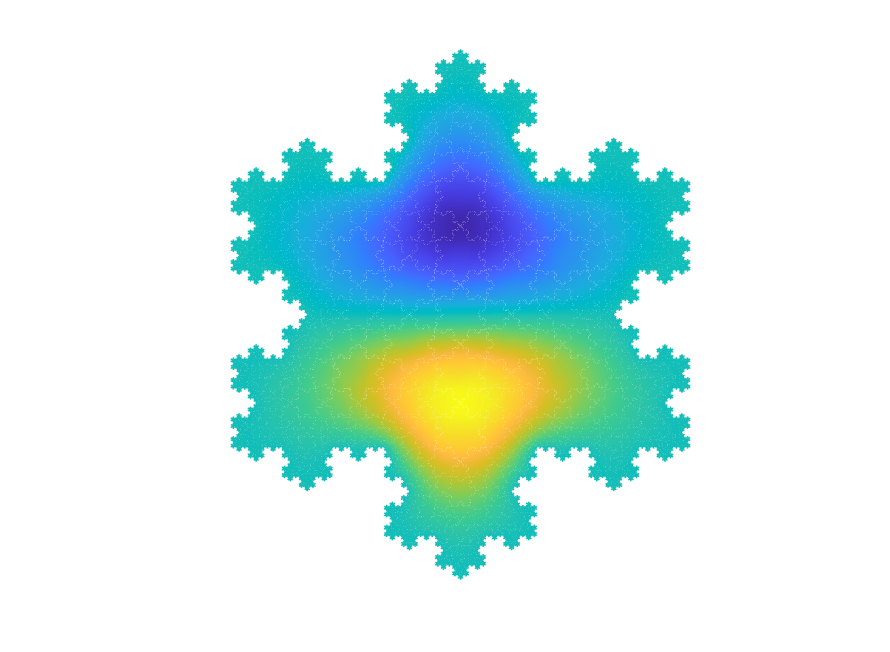}
}
\hspace{0.1cm}
\subfloat[$\phi_h^{(3)}$]{
\includegraphics[width = 0.3\textwidth, clip, trim = {70 20 70 20}]{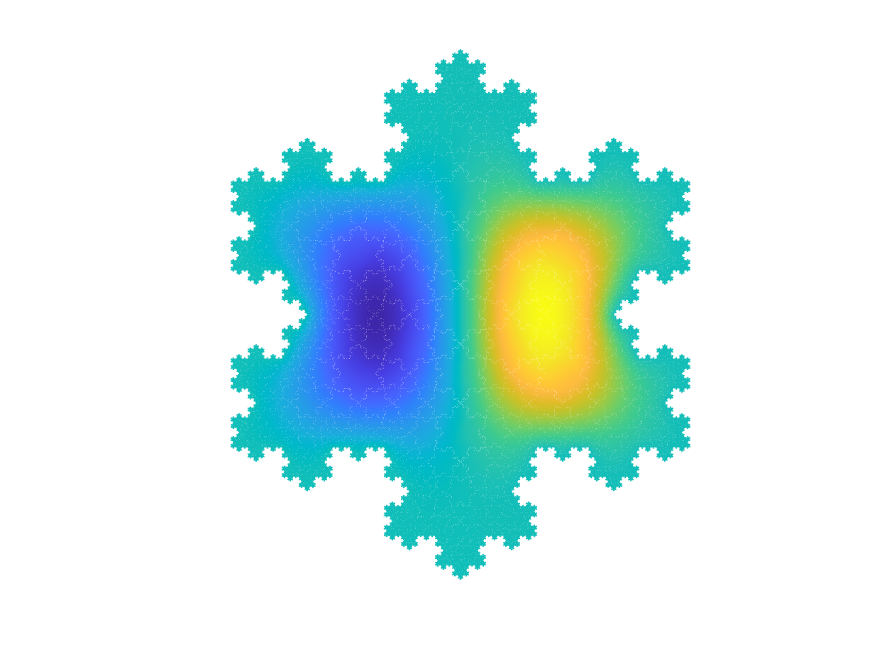}
}\\
\subfloat[$\phi_h^{(4)}$]{
\includegraphics[width = 0.3\textwidth, clip, trim = {70 20 70 20}]{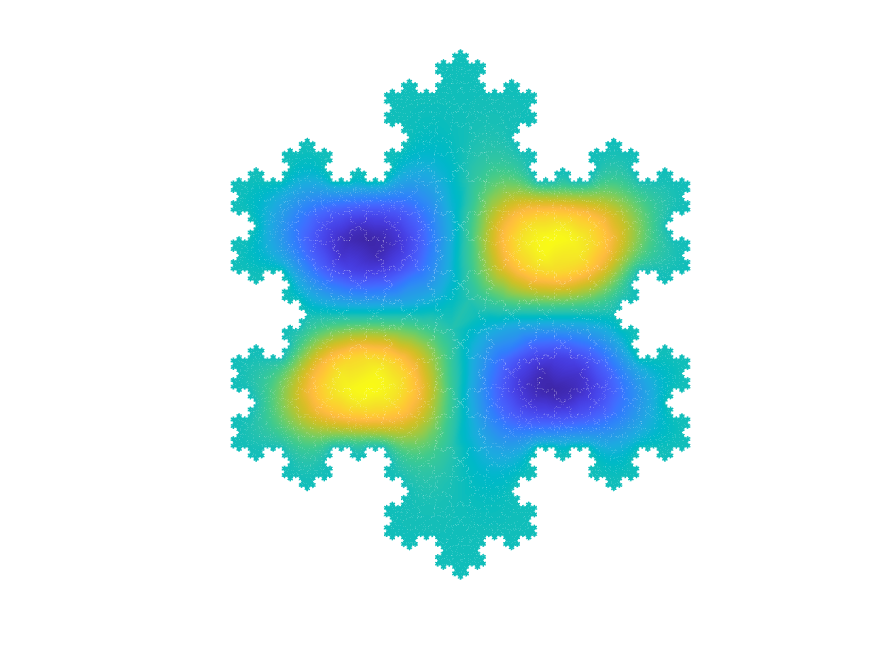}
}
\hspace{0.1cm}
\subfloat[$\phi_h^{(5)}$]{
\includegraphics[width = 0.3\textwidth, clip, trim = {70 20 70 20}]{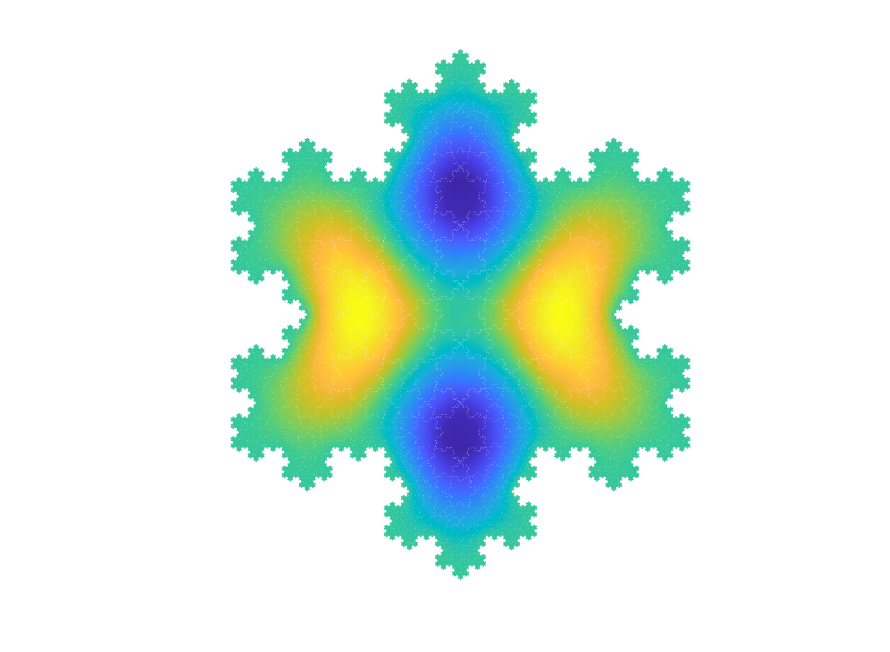}
}
\hspace{0.1cm}
\subfloat[$\phi_h^{(10)}$]{
\includegraphics[width = 0.3\textwidth, clip, trim = {70 20 70 20}]{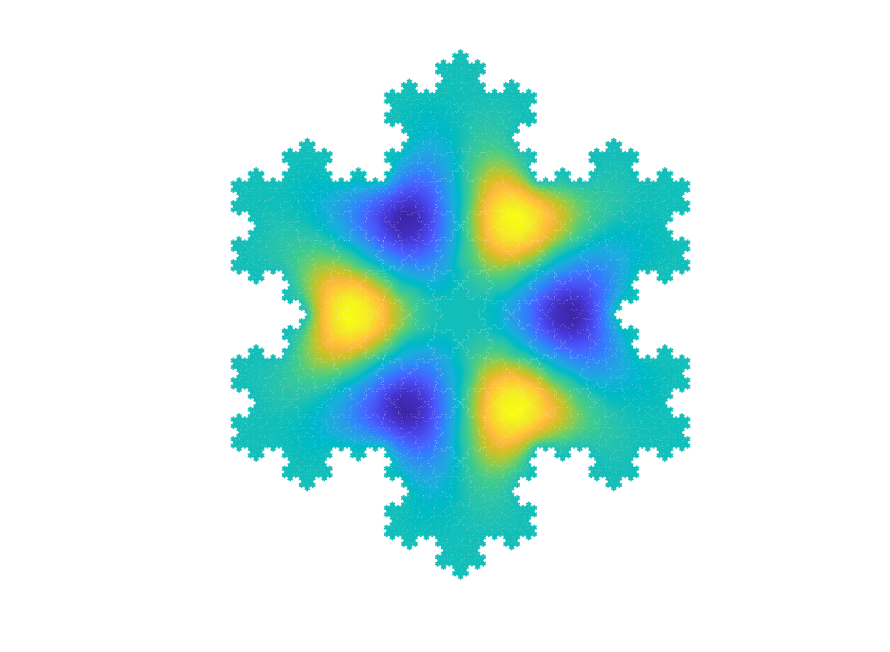}
}
\caption{Discrete eigenfunctions computed with approximations of degree~$p = 2$ on the mesh~$\cT'_{4, 3}$.\label{fig:eigenfunctions}}
\end{figure}

\section{Future work}\label{s:Future}
We have presented just the first steps towards the design and the analysis of a wider class of DG schemes employing elements with fractal boundaries.
The following are some possible extensions.
\begin{itemize}\setlength\itemsep{-.5mm}\setlength\leftmargin{-5mm}
\item Assumption \ref{ass:H22} is the main open problem.
A proof of the compact embeddings in Remark~\ref{rem:Assumption}/Proposition \ref{prop:SmallMu} for larger $\mu$, possibly together with an reduction of the exponent $\mu$ in the results of \cite{capitanelli2015weighted,NystromThesis}, would complete the justification of the convergence rates \eqref{e:Rates} of the SIP-DG-FEM.

\item Higher-order BVP solution regularity analysis and convergence of higher-degree polynomials.

\item Convergence and computational cost analysis for (possibly adaptive) boundary-refined meshes.

\item Design of high-order quadrature rules on snowflakes, wedges, and Koch curves (see \cite{joly2024high}).

\item More general domains and elements with fractal boundaries, such as those in \cite[Figure~1]{CaChHe25} (fudgeflake, Gosper island, twindragon, terdragon, Levy dragon\ldots) and the square snowflake in \cite[\S5.2]{caetano2019density}.

\item Different BVPs: e.g., \ other elliptic PDEs and boundary conditions, fractal interfaces, rough coefficients, and evolution problems.

\item
Further applications to spectral analysis, e.g.\ for Neumann and Steklov problems.

\item Different DG formulations, such as local-DG.

\item Fast solvers exploiting the regular structure of the mesh, possibly allowing matrix-free matrix--vector products in iterative schemes.
See, e.g.,\ in \cite{bannister2026scattering} an example for the discretisation of a boundary integral equation on a fractal mesh.

\item Homogenisation and upscaling:
can a fractal boundary be replaced by a smooth one with appropriate ``effective'' boundary conditions such that at a given distance the solution is indistinguishable?
How does the roughness of the boundary affect the properties of BVP solutions away from it?

 \end{itemize}

\appendix\section{Trace regularity, Poincar\'e inequality and compact embedding}
 \label{app:A}

In this appendix we prove some technical results concerning elements of the weighted space $H^{2,2}_\mu(\Omega)$ defined in \eqref{e:H22mu}, which we use in the analysis of the SIP-DG-FEM. 

We first prove trace estimates in a sector, for functions with singular behaviour at the apex. 

\begin{lem}[Trace estimates in a sector]
\label{lem:wedge}
Let $U=\{(r,\theta):0<r<1,\, 0<\theta<\theta_*\}$, for some $0<\theta_*<2\pi$, and let $0<\mu<1$.
Suppose that $v\in H^1(U)$ with 
\begin{align}
\label{e:Weighted}
\int_U {|D^2 v|^2} |\bx|^{2\mu}\,\rd x<\infty.
\end{align} 
Then $v\in C^{0,\alpha}(\overline{U})$ for all $0<\alpha<1-\mu$ {and $v\in W^2_q(U)$ for all $1\le q<2/(\mu+1)<2$}.

Further, if either $1/2\leq \mu<1$ and $1\leq q<1/\mu\leq 2$, or $0< \mu <1/2$ and $1\leq q \leq 2$, then $\partial_\bn v \in L_q(\partial U)$ and 
  there exists a constant $C_{q,\theta_*,\mu}>0$ depending only on $q$, $\theta_*$, and $\mu$, such that
 \begin{align*}
       \| \partial_\bn v\|_{L_q(\partial U)}
&\le C_{q,\theta_*,\mu}\big(\|\nabla v\|_{L_2(U)}
+ \big\||\bx|^\mu {D^2} v\big\|_{L_2(U)}
\Big).
\end{align*} 
\end{lem}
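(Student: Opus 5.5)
The plan is to reduce everything to H\"older's inequality applied to the weight $|\bx|^{\mu}$, followed by standard Sobolev embeddings and trace theorems on the Lipschitz sector $U$. Here $U$ is a sector of radius 1 with opening angle $\theta_*<2\pi$. It is Lipschitz, possibly non-convex, with a corner at the origin and two further corners where the straight edges meet the arc.

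\textbf{Step 1: $W^2_q$ regularity.} I write $|D^2v| = (|\bx|^{\mu}|D^2v|)\,|\bx|^{-\mu}$ and apply H\"older with exponents $2/q$ and $2/(2-q)$. This gives
\[
\|D^2 v\|_{L_q(U)}\le \big\||\bx|^\mu D^2v\big\|_{L_2(U)}\,\big\||\bx|^{-\mu}\big\|_{L_{2q/(2-q)}(U)} .
\]
In polar coordinates the last factor is finite iff $\mu\cdot 2q/(2-q)<2$, which is equivalent to $q<2/(\mu+1)$. Combined with $v\in H^1(U)\subset W^1_q(U)$ (since $U$ is bounded), this yields $v\in W^2_q(U)$ with the corresponding norm bound.

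\textbf{Step 2: H\"older continuity.} Sobolev embedding on the Lipschitz set $U$ gives $W^2_q(U)\subset W^1_{q^*}(U)$ with $q^*=2q/(2-q)$. As $q\uparrow 2/(\mu+1)$ we have $q^*\uparrow 2/\mu$. Morrey's inequality then gives $v\in C^{0,1-2/q^*}$, and $1-2/q^*\uparrow 1-\mu$. So every $\alpha<1-\mu$ is attained.

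\textbf{Step 3: trace of $\nabla v$.} The trace of $W^1_s(U)$ lies in $L_{s}(\partial U)$, and more sharply in $L_{s/(2-s)}(\partial U)$ for $1\le s<2$ (the one-dimensional trace exponent). I apply this to $\nabla v\in W^1_s$ with $s<2/(\mu+1)$. As $s\uparrow 2/(\mu+1)$, the exponent $s/(2-s)$ tends to $(2/(\mu+1))/(2\mu/(\mu+1))=1/\mu$. Hence $\partial_\bn v\in L_q(\partial U)$ for every $q<1/\mu$, with
\[
\|\partial_\bn v\|_{L_q(\partial U)}\le C\|\nabla v\|_{W^1_s(U)} .
\]
By Step 1 the right-hand side is bounded by $\|\nabla v\|_{L_2}+\||\bx|^\mu D^2v\|_{L_2}$, where the lower-order part uses $\|\nabla v\|_{L_s}\le C\|\nabla v\|_{L_2}$.

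When $\mu<1/2$ and $q\le2$, the exponent $1/\mu>2$, so $q=2$ is reachable by picking $s$ with $s/(2-s)\ge2$, i.e.\ $s\ge 4/3$. This is compatible with $s<2/(\mu+1)$, because $2/(\mu+1)>4/3$ exactly when $\mu<1/2$. The case $q<1$ is excluded, and $q\ge1$ is covered by the boundedness of $\partial U$.

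\textbf{Main obstacle.} The main obstacle is the sharp trace embedding $W^1_s(U)\to L_{s/(2-s)}(\partial U)$ on a Lipschitz domain with corners, particularly near the origin where the weight degenerates. I would justify it by localizing with a partition of unity and flattening each corner with a bi-Lipschitz map, which preserves $W^1_s$. Alternatively, I would apply the standard result (e.g.\ Grisvard or Gagliardo) to each straight edge and to the arc separately. If that route becomes delicate, a fallback is a direct argument near the straight edges: integrate along rays and use the weighted bound on $\partial_r\nabla v$ from H\"older with the explicit radial weight. This again reproduces the threshold $q<1/\mu$. Constants depend only on $q$, $\theta_*$, and $\mu$ through the choice of $s$ and the geometry of $U$.
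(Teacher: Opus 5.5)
Your proof is correct. Its first half (H\"older's inequality against $|\bx|^{-\mu}$ to get $v\in W^2_q(U)$ for $q<2/(\mu+1)$, then a Sobolev/Morrey embedding for H\"older continuity) is the paper's opening step.

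For the improved trace range you take a genuinely different route. The paper uses only the basic inequality $\|\partial_\bn v\|_{L_q(\partial U)}\le C\|v\|_{W^2_q(U)}$, which alone stops at $q<2/(\mu+1)$. To go further it:
\begin{enumerate}
\item splits $U$ into dyadic annular sectors $U_i=2^{1-i}U_1$;
\item applies that inequality on each $U_i$ with scaling;
\item passes from $L_q$ to $L_2$ norms on $U_i$, which is where $q\le2$ is needed;
\item inserts the weights $|\bx|^{\mu-1}$ and $|\bx|^{\mu}$ and sums the geometric series $\sum_i 2^{-i(1-q\mu)}$, which converges precisely when $q<1/\mu$;
\item bounds $\||\bx|^{\mu-1}\nabla v\|_{L_2(U)}$ by a Hardy-type inequality from Kozlov--Maz'ya--Rossmann.
\end{enumerate}

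You instead observe that the sharp trace embedding $W^1_s(U)\to L_{s/(2-s)}(\partial U)$, applied to $\nabla v\in W^1_s(U)$ with $s<2/(\mu+1)$, already reaches the same threshold, since $s/(2-s)\uparrow 1/\mu$. This removes both the dyadic summation and the Hardy inequality. It even gives every $q<1/\mu$ when $\mu<1/2$, so the cap $q\le2$ in the statement, an artefact of the paper's $L_q$-to-$L_2$ step, disappears.

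The cost is that you cite the Gagliardo/Ne\v{c}as sharp trace theorem rather than the elementary one. In exchange, the paper's argument needs only the basic trace inequality on a single reference piece, and it shows scale by scale where the condition $q\mu<1$ comes from.

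Your ``main obstacle'' is not really one. For every $\theta_*<2\pi$ the sector is a bounded Lipschitz domain, and the weight plays no role in the trace step; it enters only through Step~1. So the sharp trace theorem applies directly, and neither the localisation/flattening nor the radial fallback is needed.
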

\begin{proof}
We note first that, by the H\"older inequality,
\begin{align}
\label{e:Holder1}
\left(\int_U |{D^2} v|^q \,\rd \bx \right)^{1/q} 
\leq \left(\int_U |{D^2} v|^2 |\bx|^{2\mu} \,\rd \bx \right)^{1/2} \left(\int_U |\bx|^{-\widetilde{q}\mu} \,\rd \bx \right)^{1/\widetilde{q}},
\end{align}
whenever $1\leq q<2$ and $2\leq \widetilde{q}<\infty$ with $1/\widetilde{q} + 1/2 = 1/q$.
Thus $v\in W^2_q(U)$, and hence $\partial_\bn v\in L_q(\partial U)$ (by the standard trace theorem, e.g., \ \cite[Thm~1.5.1.2]{Grisvard85}), whenever the second integral on the right-hand side of \eqref{e:Holder1} is finite. This holds if and only if $-\widetilde{q}\mu + 1>-1$, i.e.,\ $\widetilde{q}<2/\mu$, or, equivalently, $q<2/(\mu+1)$. By \cite[eq.~(1,4,4,6)]{Grisvard85} it follows that $v\in C^{0,\alpha}(\overline{U})$ for all $0<\alpha<1-\mu$.

To increase the value of $q$ for which $\partial_\bn v \in L_q(\partial U)$ to that claimed in the statement of the lemma (the point being that $\min(1/\mu,2) > 2/(\mu + 1)$ for $0<\mu<1$), we adopt a different approach. 

We start by defining a dyadic decomposition of $U$ by introducing the sets
\[ U_i:=\big\{(r,\theta):\ 2^{-i}<r<2^{-i+1},\, 0<\theta<\theta_*\big\}, \qquad i\in \N.\]
The assumption \eqref{e:Weighted} implies that $v\in H^2(U_i)$ for every $i\in \N$. Hence, if $q\in[1,2]$, then by the standard trace inequality (e.g., \ \cite[Thm~1.5.1.2]{Grisvard85}) applied on $U_1$, combined with norm scaling (noting that $U_i=2^{1-i}U_1$), there exists $C_{{\rm Tr},\theta_*,q}>0$ such that 
 \begin{align*}
\|\partial_\bn v\|^q_{L_q(\partial U_{i})} \leq C_{{\rm Tr},\theta_*,q} 
\Big(2^{i-1}\|\nabla v\|_{L_q(U_i)}^q + 2^{(1-i)(q-1)} \|D^2  v\|_{L_q(U_i)}^q\Big).
\end{align*}
Next, we use the fact that, since $q\leq 2$, $\|v\|_{L_q(U_i)}^q\le |U_i|^{1-q/2}\|v\|_{L_2(U_i)}^q=(2^{-2i}3\theta_*/2)^{1-q/2}\|v\|_{L_2(U_i)}^q$ for $v\in L_2(U_i)$, so that (noting that $\|y\|_q^q\leq 2^{1-q/2}\|y\|_2^q$ for all $y\in\R^2$)
\begin{align*}
\|\partial_\bn v\|^q_{L_q(\partial U_{i})} &\leq C_{{\rm Tr},\theta_*,q}(3\theta_*/2)^{1-q/2} 
\Big(2^{i(q-1)-q/2}\|\nabla v\|_{L_2(U_i)}^q + 2^{-i+q-1} \|D^2  v\|_{L_2(U_i)}^q\Big),
\end{align*}
and then, since $2^i|\bx|\ge 1$ and $2^{i-1}|\bx|\le 1$ 
 for $\bx\in U_i$, we obtain
   \begin{align*}
&\|\partial_\bn v\|^q_{L_q(\partial U_{i})}\\
&\leq  C_{{\rm Tr},\theta_*,q}(3\theta_*/2)^{1-q/2} 
\Big(2^{-i(1-q\mu){+(1/2-\mu)q}}  \||\bx|^{\mu-1}\nabla v\|_{L_2(U_i)}^q + 2^{-i(1-q\mu)+q-1} \||\bx|^{\mu}D^2 v\|_{L_2(U_i)}^q\Big).
\end{align*}
  Then, bounding $\||\bx|^{\mu-1}\nabla v\|_{L_2(U_i)}\leq \||\bx|^{\mu-1}\nabla v\|_{L_2(U)}$ and
$\||\bx|^{\mu}D^2 v\|_{L_2(U_i)}\leq \||\bx|^{\mu}D^2 v\|_{L_2(U)}$ and summing 
over $i$ gives
\begin{align*}
\|\partial_\bn v\|^q_{L_q(\partial U)}\le \sum_{i=1}^\infty \|\partial_\bn v\|^q_{L_q(\partial U_{i})} 
\leq \frac{C_{{\rm Tr},\theta_*,q}(3\theta_*)^{1-q/2}\, {2^q}} 
{2-2^{q\mu}}
\Big(\||\bx|^{\mu-1}\nabla v\|_{L_2(U)}^q +  \||\bx|^{\mu}D^2 v\|_{L_2(U)}^q
\Big),
\end{align*}
where we assumed that $q<1/\mu$, so that $1-q\mu>0$, making the geometric series {$\sum_{i=1}^\infty2^{-i(1-q\mu)}=\frac{2^{q\mu}}{2-2^{q\mu}}$} summable. 
Let $\chi$ be a smooth cut-off function on $[0,\infty)$ with $\chi=1$ on $[0,1/2]$, $\chi=0$ on $[1,\infty)$, and $0\leq \chi\leq 1$. Then \cite[Lemma 7.1.1]{kozlov1997elliptic} gives the existence of a constant $C_{\rm KMR}>0$ such that, for any $g\in L_2(U)$ for which $|\bx|^\mu \nabla g\in L_2(U)$,
\begin{align*}
\label{}
\||\bx|^{\mu-1}g\|_{L_2(U)}
&\leq \||\bx|^{\mu-1}\chi g\|_{L_2(U)} + \||\bx|^{\mu-1}(1-\chi)g\|_{L_2(U)}\\
& \leq C_{\rm KMR} \||\bx|^{\mu}\nabla(\chi g)\|_{L_2(U)} + 2^{1-\mu}\|g\|_{L_2(U)}\\
& \leq C_{\rm KMR} \Big(\||\bx|^{\mu}\nabla g\|_{L_2(U)} + \|g \|_{L_2(U)}\|\nabla\chi\|_{L_\infty(U)} \Big) + 2^{1-\mu}\|g\|_{L_2(U)}.
\end{align*}
Applying this to the components of $\nabla v$ in the above and taking $q$th roots gives the assertion. 
\end{proof}

Next, we apply Lemma \ref{lem:wedge} to prove trace estimates 
for elements of the space $V^\mu$ defined in \eqref{e:V} on an LQU mesh of the snowflake $\Omega$, on the unions of line segments $\vee$ and $\wedge$, for a given face.

\begin{lem}[Trace estimates on $\vee$ and $\wedge$]
\label{lem:dnu2}
Let either $1/2\leq \mu<1$ and $1\leq q<1/\mu\leq 2$, or $0<\mu <1/2$ and $1\leq q \leq 2$.
Let $v\in V^\mu$ (defined in \eqref{e:V}), and let $\cT$ be an LQU mesh of $\Omega$ with faces $\cF$.
Then there is $\widetilde C_q>0$ depending only on $q$ such that
\begin{align*}
h_F^{1-1/q}\| \partial_\bn v\|_{L_q(\vee)}
&\le \widetilde{C}_{q}\Big(
\|\nabla v\|_{L_2(K_-)}+h_F^{1-\mu} \big\|\dist(\bx,\deK_-)^\mu D^2  v\big\|_{L_2(K_-)}\Big)
&& \forall F\in\cF,
 \\
h_F^{1-1/q}\| \partial_\bn v\|_{L_q(\wedge)}
&\le \widetilde{C}_{q}\Big(
\|\nabla v\|_{L_2(K_+)}+h_F^{1-\mu} \big\|\dist(\bx,\deK_+)^\mu  D^2 v\big\|_{L_2(K_+)}\Big)
&& \forall F\in\cF_I.
\end{align*}
Moreover, the Dirichlet trace of $v$ belongs to $L_\infty(\vee)$ for all $F\in\cF$, and to $L_\infty(\wedge)$ for all $F\in\cF_I$.
\end{lem}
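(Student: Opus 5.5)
The plan is to reduce to Lemma~\ref{lem:wedge} by localising near the two endpoints of $\vee$ (resp.\ $\wedge$) on $\deK_\pm$, combined with a scaling argument to the reference element. By similarity it suffices to work on the reference snowflake $\widehat K=\Omega$ with $h_K=2$. Under $\psi_{K}$ the quantities scale as follows: $\|\nabla v\|_{L_2}$ is invariant, $\|\partial_\bn v\|_{L_q(\vee)}$ picks up $h^{1/q-1}$, and $\|\dist(\cdot,\deK)^\mu D^2v\|_{L_2}$ picks up $h^{\mu-1}$. These are exactly the powers in the statement, using $h_F\simeq h_K$ from \eqref{e:hFhK}. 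So we only need a fixed-geometry estimate on $\widehat K$ for each of the finitely many reference configurations $\widehat\vee=S_j\cup S_{j+1}$ and $\widehat\wedge=S_j\cup S_{j+2}$. Throughout, $\dist(\bx,\partial\widehat K)^\mu D^2 v\in L_2(\widehat K)$, since $\dist(\bx,\deK)\le\delta(\bx)$ and $v\in V^\mu$.

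Each segment $S_j$ runs from the barycentre $\mathbf 0$ to a vertex $\bv_j$ of $\widehat K$. I split $S_j$ into an inner part $S_j\cap\{|\bx|\le 1-\rho\}$ and an outer part near $\bv_j$. On the inner part, choose a Lipschitz subdomain $D\Subset\widehat K$ containing it and bounded away from $\partial\widehat K$; for example, a hexagon of circumradius $1-\rho/2$ works if it lies inside the snowflake. On $D$ the weight is bounded below, so $v\in H^2(D)$ with $\|D^2v\|_{L_2(D)}\lesssim\|\dist^\mu D^2v\|_{L_2}$. The standard trace theorem then gives $\|\partial_\bn v\|_{L_q}\le\|\partial_\bn v\|_{L_2}\lesssim\|v\|_{H^2(D)}$. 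Near each vertex $\bv_j$, I use the geometric fact that the reference snowflake contains an open circular sector $U_j$ with apex $\bv_j$, radius $r_0>0$ and some opening angle $\theta_*$, which contains the segment $S_j$ near $\bv_j$ in its interior (say along the bisector). Within $U_j$ we have $\dist(\bx,\partial\widehat K)\le|\bx-\bv_j|$, because $\bv_j\in\partial\widehat K$. Hence $\int_{U_j}|D^2v|^2|\bx-\bv_j|^{2\mu}<\infty$, and after translating and scaling $U_j$ to the unit sector, Lemma~\ref{lem:wedge} applies.

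The main obstacle is that Lemma~\ref{lem:wedge} only controls $\partial_\bn v$ on $\partial U$, whereas $S_j$ runs through the interior of $U_j$. I would handle this by splitting $U_j$ along $S_j$ into two subsectors $U_j^{1},U_j^{2}$, each having $S_j\cap U_j$ as a boundary ray. Lemma~\ref{lem:wedge} applied to $U_j^{1}$ then bounds $\|\partial_\bn v\|_{L_q(S_j\cap\overline{U_j})}$ by $\|\nabla v\|_{L_2(\widehat K)}+\||\bx-\bv_j|^\mu D^2v\|_{L_2(U_j^1)}$, with the constant depending on $q$, $\theta_*/2$ and $\mu$. Choosing $\rho<r_0$ makes the inner and outer pieces overlap, so all of $S_j$ is covered. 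Summing the contributions of the at most three distinct segments of $\vee$ or $\wedge$ gives the estimate on $\widehat K$, and scaling back yields the stated inequalities. The constant then depends only on $q$, together with the fixed $\mu$ and the finitely many reference configurations.

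For the $L_\infty$ claim, the first part of Lemma~\ref{lem:wedge} gives $v\in C^{0,\alpha}(\overline{U_j^i})$ near each vertex. Interior Sobolev embedding $H^2(D)\subset C^0(\overline D)$ covers the rest of the segment. Together these show that $v$ is continuous, hence bounded, on the compact set $\vee$ (resp.\ $\wedge$). One point still needs checking: that such sectors $U_j$ fit inside the reference snowflake at each of the six vertices. I would verify this from the local structure at a vertex, where there is an interior angle containing the segment, and then transport it to all elements by similarity.
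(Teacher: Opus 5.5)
There is a genuine gap in the vertex step. You write $\dist(\bx,\partial\widehat K)\le|\bx-\bv_j|$ on $U_j$ and conclude that $\int_{U_j}|D^2v|^2|\bx-\bv_j|^{2\mu}\dx<\infty$. That inequality points the wrong way. It only gives $\int_{U_j}|D^2v|^2\dist(\bx,\partial\widehat K)^{2\mu}\dx\le\int_{U_j}|D^2v|^2|\bx-\bv_j|^{2\mu}\dx$, which bounds the quantity you know is finite by the one you want to be finite.

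What you need is the reverse estimate $|\bx-\bv_j|\le C\,\dist(\bx,\partial\widehat K)$ for all $\bx\in U_j$, and you need it twice:
\begin{enumerate}
\item to verify the hypothesis \eqref{e:Weighted} of Lemma~\ref{lem:wedge} on $U_j^1,U_j^2$;
\item to convert the term $\||\bx-\bv_j|^\mu D^2v\|_{L_2(U_j^1)}$ produced by Lemma~\ref{lem:wedge} into the weight $\dist(\bx,\deK_-)^\mu$ of the statement. Your final step never makes this conversion.
\end{enumerate}
This non-tangential estimate is exactly the geometric core of the paper's proof, inequality \eqref{e:DistXP}. There the sectors have apex at the endpoints $\bp_1,\bp_2$ of $\vee$, radius $h_F$, one side along $S^-_1$ resp.\ $S^-_2$, and aperture $\arctan(2/\sqrt3)-\pi/6$. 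These choices ensure $c\min\{|\bx-\bp_1|,|\bx-\bp_2|\}\le\dist(\bx,\deK_-)$ with $c=\sqrt3/14$.

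Your pending check that the sectors ``fit inside'' $\widehat K$ is also not the right condition. At each vertex, $\widehat K$ contains the open cone of opening $\pi/3$ about the inward bisector. However, the two Koch curves meeting there touch the bounding rays of this cone at points accumulating at the vertex. In the reference position of $\Gamma$ these are the points $3^{-k}(1/2,\sqrt3/6)$, $k\in\N$, which lie on the ray at angle $\pi/6$ from the endpoint $(0,0)$. So a sector of opening $\pi/3$ lies in $\widehat K$, yet $\dist(\bx,\partial\widehat K)/|\bx-\bv_j|\to0$ along its sides. You must take an opening $\theta_*<\pi/3$ centred on $S_j$ and a small radius $r_0$. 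Then $\dist(\bx,\partial\widehat K)\ge\sin(\pi/6-\theta_*/2)\,|\bx-\bv_j|$ on $U_j$, and your argument goes through.

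With that fix, your proof is essentially the paper's. The only difference is that you cover each segment by a short bisected sector plus an interior Lipschitz patch, whereas the paper's sectors of radius $h_F$ cover each whole segment and need no patch.

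Two smaller points:
\begin{enumerate}
\item No regular hexagon of circumradius close to $1$ lies in $\widehat K$, because the inner vertices of $\partial\widehat K$ are at distance $1/\sqrt3$ from the centre. Take $D$ to be a thin polygonal neighbourhood of the inner parts of the segments instead.
\item Apply the trace theorem to $\nabla v\in H^1(D)^2$ rather than to $v\in H^2(D)$, so that no $\|v\|_{L_2(D)}$ term appears on the right-hand side.
\end{enumerate}
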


\begin{figure}[tb]\centering 
 (a)\includegraphics[width=40mm]{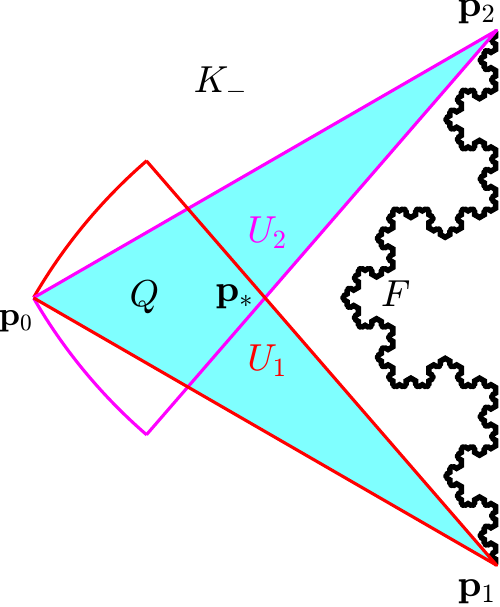}
\qquad\qquad
(b)\includegraphics[width=50mm]{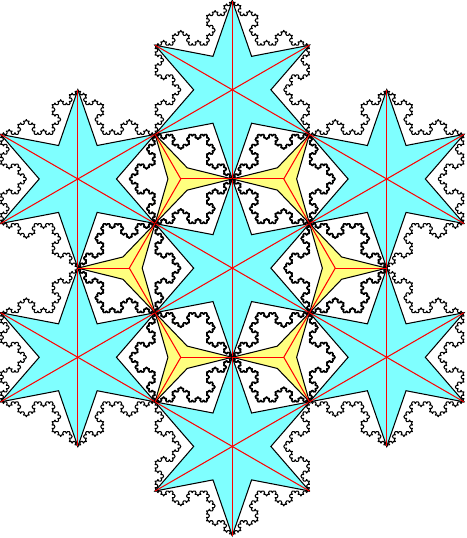}
\caption{(a) The shaded region is the quadrilateral $Q\subset\trd\subset K_-$ associated to the face $F\in\cF$.
Its boundary $\partial Q$ contains $\vee$, which has endpoints $\bp_1,\bp_2$ and apex $\bp_0$.
The sectors $U_1$ and $U_2$ (highlighted in red and magenta) have sides of length $h_F$, are contained in the elements $K_-$ and their boundary covers $\partial Q$.
All points $\bx\in Q$ satisfy the inequality \eqref{e:DistXP}.
\\
(b) The mesh $\cT_2'$ with all the quadrilaterals $Q$ (in light blue) associated to the faces $F\in\cF$ and the quadrilaterals $Q'$ (in yellow) associated to the interior faces $F\in\cF_I$.}
\label{fig:Boomerangs}
\end{figure}

\begin{proof}
We first note that, by Lemma \ref{lem:wedge} and a norm scaling argument, for any $\rho>0$, on the $\rho$-scaled version of the sector $U$ defined in the statement of Lemma \ref{lem:wedge}, i.e.,\ on $\rho U:=\{\rho\bx,\ \bx\in U\}$, we have
\begin{align}\label{e:BoundLqUh}
\rho^{1-1/q}\| \partial_\bn v\|_{L_q(\partial (rU))}
&\le C_{q,\theta_*,\mu}\Big(
\|\nabla v\|_{L_2(rU)}+\rho^{1-\mu} \||\bx|^\mu D^2 v\|_{L_2(rU)}
\Big)
\end{align} 
for all $v\in H^1(\rho U)$ with $|\bx|^\mu D^2v\in L_2(\rho U)$.

For each face $F\in\cF$, and the corresponding wedge $\trd\subset K_-$, denote by $\bp_1$ and $\bp_2$ the endpoints of $\vee$ and by $\bp_0=S_1^-\cap S_2^-$ the apex of $\vee$ (i.e.,\ the barycentre of $K_-$).
The points $\bp_0,\bp_1,\bp_2$ are the vertices of an equilateral triangle with side length $h_F$ and barycentre $(\bp_1+\bp_2+\bp_0)/3\in F$.
Let $\bp_*:=(\bp_1+\bp_2+2\bp_0)/4$.
Then $[\bp_0,\bp_1,\bp_*,\bp_2]$ are the vertices of a non-convex quadrilateral $Q$ contained in $\trd$; see the shaded region in Figure~\ref{fig:Boomerangs}(a).
There exists a constant $0<c<1$, independent of $F$, such that
\begin{align}\label{e:DistXP}
c\,\min\big\{|\bx-\bp_1|,|\bx-\bp_2|\big\}
\le \dist(\bx,F) = \dist(\bx,\deK_-)\le \dist(\bx,\deO)=\delta(\bx)
\qquad \forall \bx\in Q.
\end{align}
With some trigonometry, one can check that $c=\sqrt3/14\approx0.1237$.
Figure~\ref{fig:Boomerangs}(b) shows (in light blue)  the union of all such quadrilaterals for the mesh $\cT_2'$.
   
    For $v\in V^\mu$ and any $F\in\cF$, we apply the
bound~\eqref{e:BoundLqUh} to the two sectors $U_1$ and $U_2$ illustrated in Figure~\ref{fig:Boomerangs}(a), whose intersection $U_1\cap U_2$ equals $Q$. These have their apexes at $\bp_1$ and $\bp_2$, respectively, and are congruent to $\rho U$ for the scaling factor $\rho=|\bp_1-\bp_0|=h_F$ and the aperture $\theta_*=\arctan(2/\sqrt3)-\pi/6$ (the same as the acute angle of $Q$).
Their union $U_1\cup U_2$ covers $Q$ and is contained in $K_-$, while $\partial U_1\cup\partial U_2$ contains $\vee$.
Therefore,
      \begin{align*}
h_F^{1-1/q}\| \partial_\bn v\|_{L_q(\vee)}
&\le h_F^{1-1/q} (\|\partial_\bn v\|_{L_q(\partial U_1)}+\|\partial_\bn v\|_{L_q(\partial U_2)})
\\
&\le C_{q,\theta_*,\mu}\sum_{j=1}^2\Big(\|\nabla v\|_{L_2(U_j)}
+h_F^{1-\mu} \big\||\bx-\bp_j|^\mu D^2 v\big\|_{L_2(U_j)}\Big)
\\
&\le 2C_{q,\theta_*,\mu}\Big(\|\nabla v\|_{L_2(K_-)}
+c^{-\mu}h_F^{1-\mu} \big\|\dist(\bx,\deK_-)^\mu   D^2 v\big\|_{L_2(K_-)}\Big),
\end{align*}
which gives the first assertion of the lemma.

Moreover, Lemma~\ref{lem:wedge} shows that $v\in C^{0,\alpha}(\overline Q)$, and, in particular, the Dirichlet trace of $v$ on $\vee$ belongs to $L_q(\vee)$ for all $q\in[1,\infty]$.  

Similarly, for each internal face $F\in\cF_I$, the wedge $\tru\in K_+$ contains a nonconvex quadrilateral $Q'$ whose boundary
contains $\wedge$, and such that all $\bx\in Q'$ satisfy an inequality analogous to \eqref{e:DistXP}.
Such quadrilaterals for the mesh $\cT_2'$ are shaded in yellow in Figure~\ref{fig:Boomerangs}(b).
So the second estimate in the assertion can be obtained with a similar argument.
  \end{proof}

 We prove a Poincar\'e inequality in the wedge $\trd$, which we use in treatment of the boundary terms in the proof of the continuity result in Lemma \ref{l:Cont}.

\begin{lem}[Poincar\'e inequality in a wedge]
 \label{lem:Poincare}
Let $\cT$ be an LQU mesh of $\Omega$ with faces $\cF$.
For any face $F\in\cF$ the domain $\trd$ is an $H^1$ extension domain, i.e.,\ there exists a bounded linear operator $E_\trd:H^1(\trd)\to H^1(\R^2)$ with $(E_\trd v)|_\trd=v$ for all $v\in H^1(\trd)$. 
Hence $H^1(\trd)$ is compactly embedded in $L_2(\trd)$. 
Furthermore, there exists 
 $C_{P}>0$ such that the following Poincar\'e inequality holds:
\[ 
\frac{1}{h_F}\|v\|_{L_2(\trd)}\leq C_P\Big(\|\nabla v\|_{L_2(\trd)} + \frac{1}{h_F^{d/2}}\|v\|_{\IL_2(F)} \Big), \qquad \forall v\in H^1(\trd).
\]
\end{lem}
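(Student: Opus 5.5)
The proof has three steps: show that $\trd$ is an $H^1$ extension domain, deduce compactness, and then get the Poincar\'e inequality by a compactness argument on a reference wedge.

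\textbf{Reduction to a reference wedge.} By the construction in \S\ref{s:Wedge} and \S\ref{ss:RefDom}, every wedge $\trd$ is the image under a similarity of ratio comparable to $h_F$ of one fixed reference wedge $W$ (the wedge of $\widehat K=\Omega$ associated with one of its six faces; all six are congruent by the rotational symmetry of $\Omega$). Under such a similarity $S$, the three quantities scale in the same way once divided by the appropriate power of $h_F$:
\begin{itemize}
\item $h_F^{-1}\|v\|_{L_2(\trd)}$ is scale-invariant;
\item $\|\nabla v\|_{L_2(\trd)}$ is scale-invariant in two dimensions;
\item $h_F^{-d/2}\|v\|_{\IL_2(F)}$ is scale-invariant by \eqref{e:ScalingHausdorff}.
\end{itemize}
Hence it suffices to prove the inequality on $W$ with its face $\widehat F$ (a scaled copy of $\Gamma$). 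The extension property also transfers by scaling.

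\textbf{Extension property.} I would show that $W$ is a uniform ($(\varepsilon,\delta)$) domain in the sense of Jones, which gives an $H^1$ extension operator by \cite{Jones}.
\begin{itemize}
\item Near $\widehat F$ but away from the two endpoints, $W$ coincides locally with $\Omega$, and $\Omega$ is a uniform domain.
\item Near the apex (the barycentre), $W$ is locally a convex sector.
\item Along the open segments $S_1^-,S_2^-$, the boundary is a straight line.
\item At the two endpoints $\bp_1,\bp_2$, where a segment meets the Koch curve, one checks that the segment enters $\Omega$ at a positive angle relative to the cone $\Omega$ contains there. This should give a local cone condition, so the cigar condition holds with uniform constants.
\end{itemize}
Alternatively, I would write $W$ as the union of $\Omega\cap B$ pieces and Lipschitz pieces, and glue local extensions with a partition of unity; this requires the interfaces between pieces to be Lipschitz. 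I expect the endpoint analysis to be the main obstacle. My first attempt would be to verify the uniform-domain condition there directly, using the self-similarity of $\Gamma$ near its endpoint.

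\textbf{Compactness and Poincar\'e.} Given the extension operator, $H^1(W)\subset L_2(W)$ is compact, since restriction composed with extension factors through $H^1(B)\subset L_2(B)$ for a ball $B\supset W$ (Rellich). The Poincar\'e inequality then follows by the standard contradiction argument.
\begin{enumerate}
\item Suppose there are $v_n$ with $\|v_n\|_{L_2(W)}=1$ and $\|\nabla v_n\|_{L_2(W)}+\|v_n\|_{\IL_2(\widehat F)}\to0$.
\item By compactness, and boundedness of $v_n$ in $H^1(W)$, a subsequence converges in $L_2$ and then in $H^1$ to some $v$ with $\nabla v=0$. Since $W$ is connected, $v$ is a constant $c$ with $|c|\,|W|^{1/2}=1$.
\item The trace map $H^1(W)\to\IL_2(\widehat F)$ is continuous: apply \eqref{e:TraceIneq}-type results \cite{Biegert:09,Hinz} to the extension, i.e.\ the trace of $H^1(\R^2)$ onto the $d$-set $\widehat F$. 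Hence $\|c\|_{\IL_2(\widehat F)}=0$.
\item Since $\cH^d(\widehat F)>0$, this forces $c=0$, a contradiction.
\end{enumerate}
Scaling back to $\trd$ then gives the stated inequality with a constant $C_P$ independent of $F$.
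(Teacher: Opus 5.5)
\textbf{Gap: the extension step is not proved.} Your compactness step (extend, apply Rellich on a ball, restrict) is fine, as are the reduction to a reference wedge by scaling and the contradiction argument for the Poincar\'e inequality. The contradiction argument is exactly the proof of the abstract lemma \cite[Lemma~4.1.3]{ziemer}, which the paper applies with $Lv=\cH^d(F)^{-1}\int_F v\,\rd\cH^d$. Both versions rely on continuity of the trace $H^1(\trd)\to\IL_2(F)$, which you correctly obtain from the extension. The problem is that the extension property is the only nontrivial part of the lemma, and you leave it as a plan with an acknowledged obstacle at the junctions $\bp_1,\bp_2$. The justification you sketch there does not work as stated. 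A (local) cone condition does not imply the $(\varepsilon,\delta)$/cigar condition: a slit disc satisfies the cone condition but is not an $H^1$ extension domain. So the chain ``positive angle $\Rightarrow$ cone condition $\Rightarrow$ uniform'' is invalid. Your partition-of-unity alternative has the same hole. Near each tip, $\trd$ is locally $\Omega$ cut by a half-plane, neither locally $\Omega$ nor Lipschitz, so you would still need a local extension operator there.

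\textbf{How the paper closes it.} The paper uses the symmetry of the snowflake instead of analysing the boundary geometry. The straight sides of $\trd$ join the barycentre of $K$ to two consecutive vertices of $K$, and these lie on reflection axes of $K$. Even reflection of $v$ across these two sides therefore maps $\trd$ onto the two adjacent wedges. A further even reflection across the resulting diameter fills the other half of $K$. This gives $\widetilde v\in H^1(K)$ with $\|\widetilde v\|_{H^1(K)}=\sqrt6\,\|v\|_{H^1(\trd)}$. Membership in $H^1(K)$ is checked by testing against fields in $C_0^\infty(K)^2$: their supports lie in a Lipschitz prefractal $\widetilde K$, so one can integrate by parts on each set $\widetilde K\cap(\text{wedge})$. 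Then $E_\trd$ is Jones' extension operator for $K$ \cite{Jones} composed with this reflection map, and no analysis at $\bp_1,\bp_2$ is needed.

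\textbf{Repairing your own route.} If you prefer to keep it, make it rigorous by showing that $\partial\trd$ is a quasicircle and using that quasidisks are $(\varepsilon,\delta)$ domains. The Koch arc satisfies Ahlfors' three-point condition. Near each endpoint it stays in a $30^\circ$ cone about its chord, at angle at least $60^\circ$ from the adjoining segment, so the condition also holds across the junctions. At the apex the two segments meet at $60^\circ$.
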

\begin{proof}
Let $v\in H^1(\trd)$. Let $K\in \cT $ be the element containing $\trd$. Define $\widetilde{v}\in L_2(K)$ in the following way. Reflect $v$ evenly across both straight sides of $\trd$, then reflect evenly to the other half of $K$ (see Figure~\ref{fig:Poincare}). The function $\widetilde{v}$ so defined belongs to $H^1(K)$, as follows by a standard argument, testing the piecewise gradient against a vector field $\phi\in(C^\infty_0(K))^2$, observing that $\phi$ is supported in a Lipschitz polygonal prefractal approximation $\widetilde{K}$ of $K$ (see Figure \ref{fig:SnowflakePrefractals} or, e.g., \cite{caetano2019density}), applying the divergence theorem in each of the intersections of $\widetilde{K}$ with the six wedges making up $K$, and deducing that the piecewise gradient coincides with the weak gradient of $\widetilde{v}$, and that $\|\widetilde{v}\|_{H^1(K)}=\sqrt{6}\|v\|_{H^1(\trd)}$. We then use the fact that $K$ is an $H^1$ extension domain \cite[p.~73]{Jones} to deduce the first assertion of the lemma. The compactness of the embedding of $H^1(\trd)$ in $L_2(\trd)$ follows by \cite[Prop.~7.5]{sayas}. 

To prove the Poincar\'e inequality we use the general approach provided by \cite[Lemma 4.1.3]{ziemer}. In the notation of \cite{ziemer}, $X_0=L_2(\trd)$, $X=H^1(\trd)$, $Y$ is the space of constant functions on $\trd$, $\|\cdot\|_0=\|\cdot\|_{L_2(\trd)}$, $\|\cdot\|_1=\|\nabla(\cdot)\|_{L_2(\trd)}$, and $L:X\to Y$ is the projection defined by $Lv:=(\int_F v \,\rd\cH^d)/\cH^d(F)$. Then \cite[Lemma 4.1.3]{ziemer} gives that $\|v-Lv\|_{L_2(\trd)}\leq C \|\nabla v\|_{L_2(\trd)}$ for some constant $C>0$. Hence 
\[
\|v\|_{L_2(\trd)}\leq \|v-Lv\|_{L_2(\trd)} + \|Lv\|_{L_2(\trd)}
\leq C\|\nabla v\|_{L_2(\trd)} + \frac{|\trd|^{1/2}}{\cH^d(F)^{1/2}}\|v\|_{\IL_2(F)},
\]
where we used the fact that $|Lv|\leq \|v\|_{\IL_2(F)}/\cH^d(F)^{1/2}$ by the Cauchy--Schwarz inequality. 
The powers of $h_F$ in the assertion follow by a norm scaling argument.  
   \end{proof}

\begin{figure}[htb]\centering
\includegraphics[width=.8\textwidth]{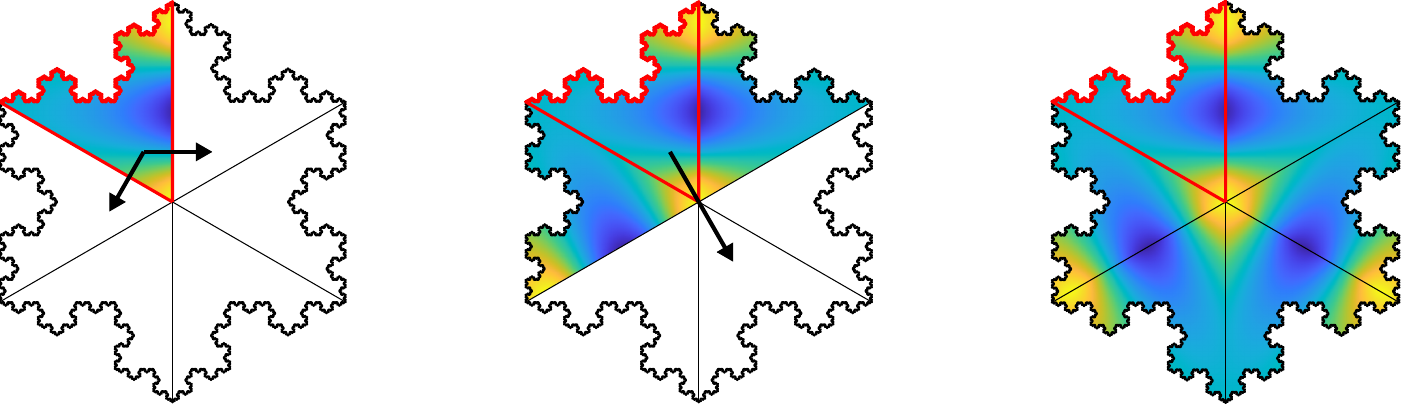}
\caption{Every $H^1(\trd)$ function can be extended to an $H^1(K)$ function by even reflections.}
\label{fig:Poincare}
\end{figure}

Finally, we show that for sufficiently small exponent $\mu$, the weighted space in \eqref{e:H22mu} is embedded in $W^2_p\OO$ for a range of $p$, so it is compactly embedded in $H^1\OO$.
This is a step towards the proof of Assumption \ref{ass:H22}.

\begin{lem}[Compact embedding of the weighted space]\label{lem:Embed}
If $\mu<1-\frac d2\approx0.369$, then the following continuous space embeddings hold:
\begin{equation}\label{e:Embed}
H^{2,2}_\mu(\Omega) \subset W^2_p\OO \subset C^{0,\alpha}(\overline\Omega) \quad 
  \forall p,\alpha \text{ such that }
1\le p <\frac{2(2-d)}{2-d+2\mu},\; 0<\alpha=2-\frac2p<\frac{2-d-2\mu}{2-d}.
\end{equation}
Moreover, the embedding $H^{2,2}_\mu(\Omega)\subset H^1\OO$ is compact.
 \end{lem}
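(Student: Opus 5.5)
The plan is to derive the embedding $H^{2,2}_\mu(\Omega)\subset W^2_p(\Omega)$ from Hölder's inequality, exactly as in \eqref{e:Holder1}. The required integrability of negative powers of $\delta$ will come from the Minkowski (box-counting) dimension of $\deO$.

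For $v\in H^{2,2}_\mu(\Omega)$ and $1\le p<2$, let $\widetilde p$ be given by $1/\widetilde p+1/2=1/p$. Then
\[
\|D^2v\|_{L_p(\Omega)}\le \|\delta^\mu D^2 v\|_{L_2(\Omega)}\,\Big(\int_\Omega \delta(\bx)^{-\widetilde p\mu}\dx\Big)^{1/\widetilde p},
\]
so everything reduces to showing $\int_\Omega\delta^{-s}\dx<\infty$ for $s<2-d$. To see this, note that $\deO$ is a finite union of similar copies of $\Gamma$, a self-similar set satisfying the open set condition. Its upper Minkowski dimension therefore equals $d$, and more precisely the Lebesgue measure of the $r$-neighbourhood satisfies $|\{\bx\in\Omega:\delta(\bx)<r\}|\le C r^{2-d}$ for $0<r\le 1$. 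This bound can be verified directly from the self-similar covering: at scale $3^{-k}$ the set $\deO$ is covered by $6\cdot 4^k$ Koch curves of diameter $3^{-k}$, each of whose $3^{-k}$-neighbourhoods has area $O(9^{-k})$.

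The layer-cake formula then gives
\[
\int_\Omega\delta^{-s}\dx=\int_0^\infty|\{\delta^{-s}>t\}|\,\rd t\lesssim |\Omega|+\int_1^\infty t^{-(2-d)/s}\,\rd t,
\]
which is finite exactly when $s<2-d$. With $s=\widetilde p\mu$ and $\widetilde p=2p/(2-p)$, this condition becomes $2p\mu/(2-p)<2-d$, i.e.\ $p<2(2-d)/(2-d+2\mu)$. Lower-order terms are controlled because $H^1(\Omega)\subset W^1_p(\Omega)$ for $p\le2$ on the bounded domain $\Omega$. This proves the first embedding, with norm bound depending on $p,\mu$. Note that this threshold exceeds $1$ precisely when $\mu<1-d/2$, which explains the hypothesis.

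For the second embedding, $W^2_p(\Omega)\subset C^{0,\alpha}(\overline\Omega)$ with $\alpha=2-2/p$, I will use that $\Omega$ is a $W^k_p$ extension domain for all $k,p$ by Jones' theorem \cite{Jones}, since it is an $(\epsilon,\delta)$ domain. Extending to $W^2_p(\R^2)$ and applying the standard Sobolev embedding for $p>1$ in two dimensions (\cite[eq.~(1,4,4,6)]{Grisvard85}) gives the claim, and substituting the range of $p$ yields the stated range of $\alpha$.

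For compactness, the same extension gives $W^2_p(\Omega)\subset W^1_{p^*}(\Omega)$ with $p^*=2p/(2-p)$. The key observation is that $p>1$ gives $p^*>2$. Since $W^2_p(\R^2)$ restricted to a ball embeds compactly into $W^1_r$ for every $r<p^*$ (Rellich--Kondrachov applied to the gradients), taking $r=2$ yields a compact embedding $W^2_p(\Omega)\subset H^1(\Omega)$. Composing with the continuous first embedding shows that $H^{2,2}_\mu(\Omega)\subset H^1(\Omega)$ is compact.

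The main obstacle is the volume estimate $|\{\delta<r\}|\lesssim r^{2-d}$. I would prove it by the explicit covering argument above rather than by quoting Minkowski-dimension results, being careful that a neighbourhood of a Koch curve of diameter $\rho$ has area at most $C\rho^2$ uniformly.
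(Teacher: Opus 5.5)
Your proposal is correct and follows essentially the same route as the paper: H\"older's inequality with the weight $\delta^{-\widetilde p\mu}$, a tube estimate $|\{\delta<r\}|\lesssim r^{2-d}$, extension to $W^2_p(\R^2)$ via Jones' theorem for the H\"older embedding, and compactness of Sobolev embeddings on a ball. The differences are cosmetic. You prove the tube estimate directly via the $6\cdot 4^k$ covering instead of citing Lapidus--Pearse, and you use the layer-cake formula instead of a dyadic sum. You also get compactness from Rellich--Kondrachov into $W^1_2$ rather than through the compact embedding $W^2_p(B)\subset W^{2-\epsilon}_p(B)$.
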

\begin{proof}
Let $w\in H^{2,2}_\mu(\Omega)$.
By the standard H\"older inequality
($\int_\Omega |fg|\dx\le\|f\|_{L_r\OO}\|g\|_{L_{r'}\OO}$ with $\frac1r+\frac1{r'}=1$, here with $f=|D^2 w|^p\delta^{\mu p}$, $g=\delta^{-\mu p}$, $r=\frac2p$) we have
\begin{align}\label{e:HolderE}
\left(\int_\Omega |D^2 w|^p \,\dx \right)^{1/p} 
\leq \left(\int_\Omega |D^2 w|^2 \delta^{2\mu} \,\dx \right)^{1/2} \left(\int_\Omega \delta^{-q\mu} \,\dx \right)^{1/q},
\end{align}
when $1\le p<2$ and $2\le q<\infty$ with $\frac1q + \frac12 =\frac1p$, i.e.\ $q=\frac{2p}{2-p}$. 
Thus $w\in W^2_p(\Omega)$, whenever the second integral on the right-hand side of \eqref{e:HolderE} is finite. 
By \cite[Lem.~2.2]{LapidusPearse2006} (and see the more general related result in \cite[Rem.~2.9]{carvalho2012hausdorff}),
 there is $C_{\mathrm{LP}}>0$ such that 
$|\{\bx\in \Omega, \delta(\bx)<\epsilon\}|\le C_\mathrm{LP}\epsilon^{2-d}$ for small $\epsilon$, with $|\cdot|$ denoting the Lebesgue measure.
Then 
$$
\int_\Omega \delta^{-q\mu}\dx
=\sum_{i=0}^\infty\int_{\{\bx\in \Omega,\ 2^{-i-1}\le\delta(\bx)<2^{-i}\}}\delta^{-q\mu}\dx
\le C_\mathrm{LP}2^{q\mu}\sum_{i=1}^\infty 2^{(q\mu-2+d)i},
$$
and this sum is finite if $q<\frac{2-d}\mu$ (which is possible since $\mu<1-\frac d2$), i.e.\ $1\le p<\frac{2(2-d)}{2-d+2\mu}<2$.
Thus $w\in W^2_p\OO$.

By \cite[p.~73]{Jones}, $\Omega$ is an $(\epsilon,\delta)$ locally uniform domain.
Thus by  \cite{Rogers,Jones} there is a continuous extension operator $E_\Omega:W^2_p\OO\to W^2_p(\R^2)$ with $(E_\Omega w)|_\Omega=w$ for all $w$.
So $W^2_p\OO$ coincides with the space of the restrictions to $\Omega$ of $W^2_p(\R^2)$ functions, denoted $W^2_p(\overline\Omega)$ in \cite[Def.~1.3.2.4]{Grisvard85}, and is embedded in $C^{0,\alpha}(\overline\Omega)$ for $\alpha=2-\frac2p\in(0,\frac{2-d-2\mu}{2-d})$ by standard embeddings such as \cite[eq.~(1,4,4,6)]{Grisvard85}.

Let $B\subset\R^2$ be a ball containing $\Omega$ and $0<\epsilon\leq 2-\frac2p$.
 Using the Sobolev embedding \cite[eq.~(1,4,4,5)]{Grisvard85} and the compactness of $W^2_p(B)\subset W^{2-\epsilon}_p(B)$ by \cite[Thm~1.4.3.2]{Grisvard85}, 
the compactness claimed in the assertion is a consequence the following chain of continuous operators:
\begin{equation*}
H^{2,2}_\mu\OO  \xrightarrow{\text{embedding}} 
W^2_p\OO \xrightarrow {\text{extension}} 
W^2_p(B) \xrightarrow[\text{compact}]{\text{embedding}}  
W^{2-\epsilon}_p(B) \xrightarrow{\text{embedding}}  
H^1(B) \xrightarrow{\text{restriction}}  H^1\OO.
\end{equation*}
\end{proof}

 \section{Integration formulas on fractal domains}
\label{app:Integration}

\begin{figure}[t]
 \centering
  \begin{subfigure}[t]{.4\textwidth}
\centering
\includegraphics[height=50mm]{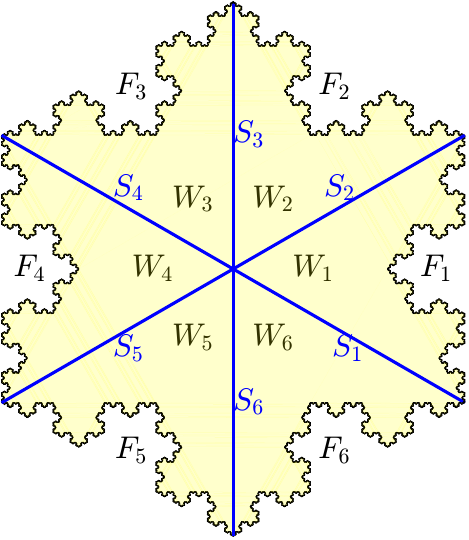} 
\caption{{}}
\end{subfigure}
 \begin{subfigure}[t]{.4\textwidth}
\centering
\includegraphics[height=50mm]{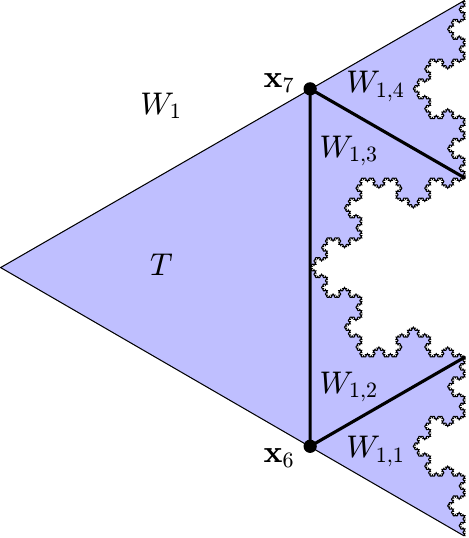}
\caption{{}}
\end{subfigure}
    \caption{(a) Decomposition of the snowflake $\Omega$ into six wedges $W_1,\ldots,W_6$.  (b) Decomposition of the wedge $W_1$ into the equilateral triangle $T$ and four wedges $W_{1,1},\ldots,W_{1,4}$.}
\label{Fig2}
\end{figure}

In this appendix we present exact  formulas for the integration of polynomials over the fractal reference domains used in the implementation of our SIP-DG-FEM, as discussed in \S\ref{s:Quad}. 
We obtain our integration formulas using self-similarity and homogeneity arguments similar to those used in e.g.,\ \cite{barnsley1985iterated, vrscay1989iterated,
strichartz2000evaluating, disjoint,nondisjoint,joly2024high}. 
We present only those formulas required to implement our method for polynomial degree $p=1$ and $p=2$. But higher order formulas permitting implementation of our method for larger $p$ values could be obtained by the same methodology.

We start by considering integrals of polynomials over the reference wedges $W_1,\ldots,W_6$ illustrated in Figure \ref{Fig2}(a). 
We first note that, for each $i$, $W_{i+1}$ is the rotation of $W_i$ by an angle $\pi/3$ about the origin, i.e.,\ 
$R^2:W_i\to W_{i+1}$ is a bijection, where $R$ was defined in \eqref{e:smDef}. Explicitly,
\[ R^2(\bx) =  
\begin{pmatrix}c & -s\\s & c\end{pmatrix}
\bx  , \ \qquad c=\cos\frac\pi3=\frac12,\qquad s=\sin\frac\pi3=\frac{\sqrt3}2.
\]

\begin{lem}\label{lem:WedgeQuad}
Let $W_1,\ldots,W_6\subset\Omega$ be the wedges illustrated in Figure \ref{Fig2}(a). 
For an integrable function $f$ let
 $I_i[f]:=\int_{W_i}f \dx $ for $i=1,\ldots,6$.
Then 
\begin{align}
\label{e:WedgeQuad}
I_1[1]=\frac{\sqrt{3}}{5},\quad I_1[x]=\frac{11}{60},\quad I_1[y]=0,\quad I_1[x^2]=\frac{281\sqrt{3}}{4400},\quad I_1[xy]=0,\quad I_1[y^2]=\frac{39\sqrt{3}}{4400},
\end{align}
and, for $i=2,\ldots 6$,
\begin{align}
I_{i}[1] &= 
I_{i-1}[1],\notag\\ 
I_{i}[x] &= 
cI_{i-1}[x] - sI_{i-1}[y],\notag\\ 
I_{i}[y] &= 
sI_{i-1}[x] + cI_{i-1}[y],\notag\\ 
I_{i}[x^2] &= 
c^2I_{i-1}[x^2] - 2csI_{i-1}[xy] + s^2I_{i-1}[y^2],\notag\\ 
I_{i}[xy] &= 
csI_{i-1}[x^2] +(c^2-s^2)I_{i-1}[xy] -csI_{i-1}[y^2],\notag\\
I_{i}[y^2] &= 
s^2I_{i-1}[x^2] + 2csI_{i-1}[xy] + c^2I_{i-1}[y^2].
\label{e:WedgeQuad2}
\end{align}
\end{lem}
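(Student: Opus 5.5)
The recursion \eqref{e:WedgeQuad2} is the easy part, and I will do it first. $R^2$ maps $W_{i-1}$ bijectively onto $W_i$ and has unit Jacobian, so
\[
I_i[f]=\int_{W_{i-1}} f(R^2\bx)\dx .
\]
Taking $f=1,x,y,x^2,xy,y^2$ and substituting $R^2\bx=(cx-sy,\,sx+cy)$, then expanding the products, gives the six stated relations directly.

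For the base values \eqref{e:WedgeQuad}, two facts are immediate.
\begin{itemize}
\item $W_1$ is symmetric under the reflection $y\mapsto -y$ (Figure~\ref{Fig2}(a)), so $I_1[y]=I_1[xy]=0$.
\item The six wedges are congruent and partition $\Omega$ up to null sets. Hence $I_1[1]=|\Omega|/6=\sqrt3/5$.
\end{itemize}

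The remaining values $I_1[x]$, $I_1[x^2]$, $I_1[y^2]$ come from self-similarity, in the spirit of \cite{barnsley1985iterated,strichartz2000evaluating}. I will use the decomposition of Figure~\ref{Fig2}(b): $W_1$ splits, up to null sets, into the equilateral triangle $T$ and four smaller wedges $W_{1,1},\ldots,W_{1,4}$. If the figure shows parts that are removed rather than added, the same argument goes through with signs. The decomposition follows from the Koch curve splitting $\Gamma=\bigcup_{m=1}^4 t_m(\Gamma)$ in \eqref{e:tmDef}, applied to the face $F_1$. It should give
\[
W_{1,k}=\sigma_k(W_{j_k}),\qquad \sigma_k(\bx)=\tfrac13 R^{2r_k}\bx+\mathbf{b}_k ,
\]
for explicit rotation indices $r_k$, translations $\mathbf{b}_k$ and wedge indices $j_k$. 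I would read these off from the IFS data \eqref{e:smDef} and \eqref{e:tmDef}.

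Then, for a monomial $f$ of degree $n\le 2$,
\[
\int_{W_1} f=\int_T f+\sum_{k=1}^4 \tfrac19\int_{W_{j_k}} f\circ\sigma_k .
\]
The polynomial $f\circ\sigma_k$ has leading part $3^{-n}\,(f\circ R^{2r_k})$ plus lower-degree terms coming from $\mathbf{b}_k$. Using \eqref{e:WedgeQuad2} to rotate every $W_{j_k}$-moment back to a $W_1$-moment, the identity becomes a triangular linear system degree by degree.
\begin{itemize}
\item At degree $n$, the unknown $W_1$-moments of degree $n$ appear multiplied by $1-\frac49\,3^{-n}M_n$, where $M_n$ is a rotation-averaging matrix with norm at most $1$.
\item The right-hand side involves only the exact polynomial integrals over $T$ and the already known lower-degree moments.
\item This operator is invertible because $\tfrac49 3^{-n}<1$, so each degree is solved uniquely.
\end{itemize}
As a check, the degree-$0$ equation must reproduce $\sqrt3/5$. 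The degree-$1$ equation gives $I_1[x]$, and the degree-$2$ equation gives $I_1[x^2]$ and $I_1[y^2]$, with $I_1[xy]=0$ serving as a consistency check.

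The main obstacle is the geometric step: establishing the decomposition of Figure~\ref{Fig2}(b) rigorously and identifying each $\sigma_k$ and $j_k$ correctly, since every rotation or translation error propagates into the rational constants. I would first verify the pieces against the area identity $|W_1|=\sqrt3/5$, and then against the centroid, using symmetry of the whole snowflake.
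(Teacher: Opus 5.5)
Your outline follows the paper's proof. It has the same ingredients: the recursion \eqref{e:WedgeQuad2} from the unit-Jacobian bijection $R^2\colon W_{i-1}\to W_i$, $I_1[1]=|\Omega|/6$, $I_1[y]=I_1[xy]=0$ by the reflection $y\mapsto -y$, and a splitting of $W_1$ into an equilateral triangle $T$ plus four $\frac13$-scaled wedges, solved degree by degree. All of that is sound.

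The gap is that you stop exactly where the content of the lemma begins. The statement \emph{is} the three constants $I_1[x]=\frac{11}{60}$, $I_1[x^2]=\frac{281\sqrt3}{4400}$ and $I_1[y^2]=\frac{39\sqrt3}{4400}$. You only show that these moments solve some uniquely solvable system, and you never determine its data: $T$, the maps $\sigma_k$ and the indices $j_k$. Nothing in the proposal distinguishes the claimed values from any others. Also, the splitting is really a consequence of the snowflake decomposition \eqref{eqn:Decomp}, not of the curve splitting \eqref{e:tmDef} alone, which says nothing about the region between $T$ and $F_1$.

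To close the gap, here is the missing data. $T$ has vertices $0$, $\bx_6=(1/\sqrt3,-1/3)$ and $\bx_7=(1/\sqrt3,1/3)$. The four wedges are $s_6(W_1)$, $s_6(W_2)$, $s_7(W_1)$ and $s_7(W_6)$; these are the parts of the small snowflakes $s_6(\Omega)$, $s_7(\Omega)$ lying in $W_1\setminus T$. So all $r_k=0$, and by the reflection symmetry, for $f\in\{x,x^2,y^2\}$,
\[
I_1[f]=I_T[f]+\frac29\int_{W_1}f\big(\bx_6+\tfrac{\bx}{3}\big)\dx+\frac29\int_{W_2}f\big(\bx_6+\tfrac{\bx}{3}\big)\dx .
\]
The area check $\frac{\sqrt3}{9}+\frac49\cdot\frac{\sqrt3}{5}=\frac{\sqrt3}{5}$ is consistent with this. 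With $I_T[x]=\frac{2}{27}$, the degree-one equation is $I_1[x]=\frac{22}{135}+\frac19 I_1[x]$, so $I_1[x]=\frac{11}{60}$. For degree two, use $I_T[x^2]=\frac{1}{18\sqrt3}$, $I_T[y^2]=\frac{1}{162\sqrt3}$, and \eqref{e:WedgeQuad2} to move the $W_2$-moments to $W_1$. The equations reduce to $157I_1[x^2]-3I_1[y^2]=10\sqrt3$ and $-3I_1[x^2]+157I_1[y^2]=\frac{6\sqrt3}{5}$, whose solution is the stated pair.

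A minor point on invertibility: in the monomial basis, the degree-two rotation matrix in \eqref{e:WedgeQuad2} is not a Euclidean contraction, so "norm at most 1" needs care. Argue via the spectral radius instead: plane rotations commute and act with unimodular eigenvalues, so $\frac49 3^{-n}M_n$ has spectral radius below one. Alternatively, fix a rotation-invariant norm.
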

\begin{proof}
The relations \eqref{e:WedgeQuad2} are a simple consequence of the fact that $R^2:W_i\to W_{i+1}$ is a bijection. The point of these relations is that they reduce the calculation of integrals over $W_i$, $i=2,\ldots 6$, to the calculation of integrals \eqref{e:WedgeQuad} over $W_1$, which we now turn our attention to.

 Using the fact that 
$|\Omega| = \frac{6\sqrt{3}}{5}$,
we have that $I_i[1]=\frac{1}{6}|\Omega|=\frac{\sqrt{3}}{5}$,$i=1,\ldots,6$.

To calculate integrals of higher degree polynomials, we use the decomposition of $W_1$ illustrated in Figure \ref{Fig2}(b),  which expresses $W_1$ as a union of an equilateral triangle $T$ of side length $2/3$, and four scaled, rotated and translated copies of $W_1$, which we call $W_{11}, W_{12}, W_{13}, W_{14}$, with area $1/9$ that of $W_1$. 
For a function $f$ we write $I_{1i}[f]:=\int_{W_{1i}}f \dx $ for $i=1,\ldots,4$, and $I_T[f]:=\int_{T}f \dx $.

For the degree one polynomials, we have
\[ I_1[x] = I_T[x] + 2I_{11}[x]+2I_{12}[x] \qquad \text{and} \qquad I_1[y]=0.\]
Furthermore, defining $\bx^*:=\bx_6 = (x^*,y^*)= (1/\sqrt{3},-1/3)$, 
\begin{align*}
I_{11}[x] &= \int_{W_{11}}x \dx  = \frac{1}{9}\int_{W_1}\left(x^* + \frac{x}{3}\right) \dx  = \frac{1}{9}\left(x^* I_1[1] + \frac{1}{3}I_1[x]\right) = \frac{1}{9}\left(\frac{1}{5} + \frac{1}{3}I_1[x]\right),\\
I_{12}[x] &= \int_{W_{12}}x \dx  = \frac{1}{9}\int_{W_2}\left(x^* + \frac{x}{3}\right)\dx  = \frac{1}{9}\left(x^* I_2[1] + \frac{1}{3}I_2[x]\right) = \frac{1}{9}\left(\frac{1}{5} + \frac{1}{6}I_1[x]\right),
\end{align*}
and
\[ I_T[x] = 2\int_0^{1/\sqrt{3}}\int_0^{x/\sqrt{3}}x\, \rd y\rd x = \frac{2}{\sqrt{3}}\int_0^{1/\sqrt{3}} x^2 \,\rd x = \frac{2}{27}.\]
Hence 
\[ I_1[x] = \frac{2}{27} + \frac{4}{45} + \frac{1}{9}I_1[x] = \frac{22}{135} + \frac{1}{9}I_1[x],\]
and solving for $I_1[x]$ gives
\[ I_1[x] = \frac{9}{8}\cdot\frac{22}{135} = \frac{11}{60}.\]

For the degree two polynomials we have
\begin{align*}
&I_1[x^2] = I_T[x^2] + 2I_{11}[x^2]+2I_{12}[x^2],\qquad I_1[y^2] = I_T[y^2] + 2I_{11}[y^2]+2I_{12}[y^2],\qquad \text{and}\quad I_1[xy]=0.
\end{align*}
Furthermore, 
\begin{align*}
\label{}
I_{11}[x^2] = \int_{W_{11}}x^2 \dx  = \frac{1}{9}\int_{W_1} \big(x^* + \frac{x}{3}\big)^2 \dx  &= \frac{1}{9}\left((x^*)^2 I_1[1] + \frac{2}{3}x^* I_1[x] + \frac{1}{9}I_1[x^2]\right)\\
 &= \frac{1}{81}\left(\frac{29}{10\sqrt{3}} + I_1[x^2]\right),\\
I_{12}[x^2] = \int_{W_{12}}x^2 \dx  = \frac{1}{9}\int_{W_2} \big(x^* + \frac{x}{3}\big)^2 \dx 
&= \frac{1}{9}\left((x^*)^2 I_2[1] + \frac{2}{3}x^* I_2[x] + \frac{1}{9}I_2[x^2]\right)\\ 
 &= \frac{1}{108}\left(\frac{47}{15\sqrt{3}} + \frac{1}{3}I_1[x^2]+ I_1[y^2] \right),\\
I_{11}[y^2] = \int_{W_{11}}y^2 \dx  = \frac{1}{9}\int_{W_1}\big(y^* + \frac{y}{3}\big)^2 \dx &= \frac{1}{9}\left((y^*)^2 I_1[1] + \frac{2}{3}y^* I_1[y] + \frac{1}{9}I_1[y^2]\right)\\
 &= \frac{1}{81}\left(\frac{\sqrt{3}}{5} + I_1[y^2]\right),\\
I_{12}[y^2] = \int_{W_{12}}y^2 \dx  = \frac{1}{9}\int_{W_2}\big(y^* + \frac{y}
{3}\big)^2  \dx 
&= \frac{1}{9}\left((y^*)^2 I_2[1] + \frac{2}{3}y^* I_2[y] + \frac{1}{9}I_2[y^2]\right)\\ 
 &= \frac{1}{324}\left(\frac{1}{5\sqrt{3}} + 3I_1[x^2]+ I_1[y^2] \right),
\end{align*}
and
\[ I_T[x^2] = 2\int_0^{1/\sqrt{3}}\int_0^{x/\sqrt{3}}x^2\, \rd y\rd x = \frac{2}{\sqrt{3}}\int_0^{1/\sqrt{3}} x^3 \,\rd x = \frac{1}{18\sqrt{3}},\]
 \[ I_T[y^2] =  2\int_0^{1/\sqrt{3}}\int_0^{x/\sqrt{3}}y^2\, \rd y\rd x  = \frac{2}{9\sqrt{3}}\int_0^{1/\sqrt{3}} x^3 \,\rd x = \frac{1}{162\sqrt{3}}.\]
Hence 
   \begin{align*}
     I_1[x^2] = \frac{1}{27\sqrt{3}} + \frac{5}{162}I_1[x^2] + \frac{1}{54} I_1[y^2]\qquad\text{and}\qquad
I_1[y^2] = \frac{1}{45\sqrt{3}} + \frac{1}{54}  I_1[x^2] + \frac{5}{162}I_1[y^2]
\end{align*}
 satisfy the system
 \[ \left(\begin{array}{cc}
 157 & -3\\
 -3 &   157
 \end{array}\right)
\left(\begin{array}{c}
 I_1[x^2] \\
 I_1[y^2] 
 \end{array}\right) 
=
\left(\begin{array}{c}
 10\sqrt{3} \\
 \frac{6\sqrt{3}}{5} 
 \end{array}\right),
 \]
               which has the solution
    \[ I_1[x^2] = \frac{281 \sqrt{3}}{4400}, \qquad I_1[y^2] =  \frac{39\sqrt{3}}{4400}. \]
\end{proof}

By exploiting symmetries and combining the values of the integrals on the wedges, we can also compute the integrals of monomials on the full snowflake $\Omega$. We note that the results of Lemma \ref{lem:SnowQuad} could alternatively be derived by an application of the recursion discussed in \cite[Thm~9]{barnsley1985iterated}, using the decomposition \eqref{eqn:Decomp}.

\begin{lem}
\label{lem:SnowQuad}
Let $\Omega$ be the Koch snowflake. Then 
\begin{align*}
\int_\Omega 1 \dx  &=\frac{6\sqrt3}5, \qquad 
\int_\Omega x \dx  =\int_\Omega y \dx =\int_\Omega xy \dx  =0,\qquad
\int_\Omega x^2 \dx =
 \int_\Omega y^2 \dx = \frac{12\sqrt3}{55}.
\end{align*}
\end{lem}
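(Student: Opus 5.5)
The plan is to obtain the snowflake integrals by summing the wedge integrals of Lemma~\ref{lem:WedgeQuad} over the six wedges. Figure~\ref{Fig2}(a) gives the partition $\Omega=\bigcup_{i=1}^6 W_i$, up to the line segments $S_1,\ldots,S_6$, which have zero Lebesgue measure. Hence for each monomial $f\in\{1,x,y,x^2,xy,y^2\}$ we have
\[
\int_\Omega f\dx=\sum_{i=1}^6 I_i[f].
\]
The first identity, $\int_\Omega 1\dx=6I_1[1]=\frac{6\sqrt3}{5}$, is then immediate, and also agrees with the known area $|\Omega|$.

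For the linear moments, I would write the recursion \eqref{e:WedgeQuad2} in vector form. Setting $\mathbf{m}_i:=(I_i[x],I_i[y])^\top$, the recursion says $\mathbf{m}_i=R^2\mathbf{m}_{i-1}$, so that $\sum_{i=1}^6\mathbf{m}_i=\big(\sum_{k=0}^5 R^{2k}\big)\mathbf{m}_1$. Since $R^2$ is a rotation by $\pi/3$, the sum $\sum_{k=0}^5 R^{2k}$ is the sum of all six rotations by multiples of $\pi/3$. This sum equals $\mathrm{Re}\sum_{k=0}^5 e^{ik\pi/3}$ together with the corresponding imaginary parts, and it is the zero matrix. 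Therefore $\int_\Omega x\dx=\int_\Omega y\dx=0$. Alternatively, this follows directly from the symmetry of $\Omega$ under $\bx\mapsto-\bx$.

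For the quadratic moments, I would collect them in the symmetric matrix
\[
M_i:=\int_{W_i}\bx\bx^\top\dx .
\]
The change of variables under the rotation $R^2:W_{i-1}\to W_i$ gives $M_i=R^2M_{i-1}R^{-2}$, which is exactly \eqref{e:WedgeQuad2}. It follows that
\[
\sum_i M_i=\sum_{k=0}^5 R^{2k}M_1R^{-2k}.
\]
To evaluate this sum, I would decompose $M_1=\frac{\mathrm{tr}M_1}{2}I+N$ with $N$ traceless and symmetric. Conjugating $N$ by a rotation through the angle $\theta$ rotates it by the angle $2\theta$. The average of $N$ over the angles $2k\pi/3$ therefore vanishes, because $\sum_{k=0}^5 e^{2ik\pi/3}=0$. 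This leaves
\[
\int_\Omega \bx\bx^\top\dx=3\,\mathrm{tr}(M_1)\,I=3\big(I_1[x^2]+I_1[y^2]\big)I .
\]
In particular, $\int_\Omega xy\dx=0$ and $\int_\Omega x^2\dx=\int_\Omega y^2\dx$.

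Substituting the values from \eqref{e:WedgeQuad} gives
\[
3\cdot\frac{(281+39)\sqrt3}{4400}=\frac{960\sqrt3}{4400}=\frac{12\sqrt3}{55}.
\]
The only step needing care is the cancellation of the traceless part. It can also be verified by direct arithmetic, iterating the explicit recursion with $c=\frac12$ and $s=\frac{\sqrt3}2$ starting from $I_1[xy]=0$. As a cross-check, the result could also be derived from the self-similar decomposition \eqref{eqn:Decomp} via the moment recursion of \cite[Thm~9]{barnsley1985iterated}.
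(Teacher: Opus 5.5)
Your proposal is correct and is essentially the paper's argument. Both sum the six wedge moments from Lemma~\ref{lem:WedgeQuad} and arrive at $\int_\Omega x^2\dx=\int_\Omega y^2\dx=3\big(I_1[x^2]+I_1[y^2]\big)=\frac{12\sqrt3}{55}$; the only difference is bookkeeping. The paper uses the reflection symmetries of $\Omega$ to write $\int_\Omega x^2\dx=2I_1[x^2]+4I_2[x^2]$ and cites symmetry for the vanishing $x$, $y$, $xy$ moments, whereas you carry the rotation recursion through all six wedges and remove the traceless part of $M_1$ by averaging over rotations, which gives $\int_\Omega xy\dx=0$ without a separate symmetry appeal.
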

\begin{proof}
The first formula is just the area of $\Omega$, which has already been reported earlier in the paper. That the integrals over $\Omega$ of $x$, $y$ and $xy$ vanish follows by symmetry. Finally,    \begin{align*}
\int_\Omega x^2 \dx  &=2\int_{W_1}x^2 \dx  + 4 \int_{W_2}x^2 \dx  \\
& =2I_1[x^2]+4(c^2I_1[x^2]-2csI_1[xy]+s^2I_1[y^2])  =3I_1[x^2]+3I_1[y^2]
  =\!\frac{12\sqrt3}{55},
\\
\int_\Omega y^2 \dx  &=2\int_{W_1}y^2 \dx  +4\int_{W_2}y^2 \dx  \\
& =2I_1[y^2]+4(s^2I_1[x^2]+2csI_1[xy]+c^2I_1[y^2])
=3I_1[y^2]+3I_1[x^2]
=\!\frac{12\sqrt3}{55}.
\end{align*}
\end{proof}

Finally, we present formulas for the integration of monomials on the Koch curve $\Gamma$, with respect to the Hausdorff measure $\cH^d$ for $d=\log{4}/\log{3}$.
 \begin{lem}\label{lem:KochQuad}
Let $\Gamma$ be the Koch curve defined in \S\ref{ss:Koch}, and let $d:=\log{4}/\log{3}$. 
For an $\cH^d$-integrable function $f$, let  $J[f]:=\int_{\Gamma}f(x)\,\rd \cH^d(x)$. 
Then 
\begin{align*}
\label{}
J[1]&=1,\\
J[x]&=\frac{1}{2},\quad
J[y]=\frac{1}{6\sqrt{3}},\\
J[x^2]&=\frac{19}{60},\quad
J[xy]=\frac{1}{12\sqrt{3}},\quad
J[y^2]=\frac{1}{60},\\
J[x^3]&=\frac{9}{40},\quad
J[x^2y]=\frac{13}{280\sqrt{3}},\quad
J[xy^2]=\frac{1}{120},\quad
J[y^3]=\frac{1}{168\sqrt{3}},\\
J[x^4]&=\frac{92983}{542640},\quad
J[x^3y]=\frac{47}{1680\sqrt{3}},\quad
J[x^2y^2]=\frac{47}{10640},\quad
J[xy^3]=\frac{1}{336\sqrt{3}},\quad
J[y^4]=\frac{83}{108528}.
\end{align*}
\end{lem}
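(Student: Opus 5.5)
We will derive all of these moments from the self-similarity $\Gamma=\bigcup_{m=1}^4 t_m(\Gamma)$ of \S\ref{ss:DecompKoch}, following \cite{barnsley1985iterated,strichartz2000evaluating}. The basic tool is the invariance of $\cH^d|_\Gamma$. Since the $t_m$ satisfy the open set condition, the pieces $t_m(\Gamma)$ overlap only in finitely many points, which are $\cH^d$-null. By the scaling property of $\cH^d$ each piece has measure $3^{-d}=1/4$. Hence, for every polynomial $f$,
\[
J[f]=\frac14\sum_{m=1}^4 J[f\circ t_m].
\]
The normalisation $\cH^d(\Gamma)=1$ gives $J[1]=1$.

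The plan is then an induction on the total degree $n$. Write $t_m(\bx)=\frac13 A_m\bx+\mathbf b_m$, with $A_m\in\{I,R^2,R^{-2}\}$ and $\mathbf b_m$ as in \eqref{e:tmDef}. For a monomial $x^ay^b$ with $a+b=n$, the composite $(x^ay^b)\circ t_m$ is a polynomial of degree $n$. Its degree-$n$ part is $3^{-n}$ times the monomial rotated by $A_m$; the lower-degree parts involve only moments already known. The self-similarity identity therefore becomes a linear system for the vector $\mathbf J_n$ of the $n+1$ degree-$n$ moments:
\[
\Big(I-\tfrac{1}{4\cdot 3^n}\textstyle\sum_m M_n(A_m)\Big)\mathbf J_n=\mathbf r_n ,
\]
where $M_n(A)$ is the action of the rotation on homogeneous polynomials of degree $n$ and $\mathbf r_n$ collects the lower-order contributions. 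For $n\ge1$ the matrices $M_n(A_m)$ have norm bounded by a constant $C_n$ independent of scale, whereas the factor in front is $\frac{1}{4\cdot 3^n}$. This makes the system matrix diagonally dominant, hence invertible, so each level has a unique solution. Solving successively for $n=1,2,3,4$ yields the stated values.

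Before solving, we will cut the work using symmetry. $\Gamma$ is symmetric under the reflection $x\mapsto 1-x$, so $J[x]=\frac12$ at once. Similarly, odd central moments in $x-\frac12$ vanish, which relates for example $J[x^3]$ and $J[xy]$ to lower moments. This roughly halves the size of each system. We will also use the alternative two-piece decomposition $\Gamma=\sigma_1(\Gamma)\cup\sigma_2(\Gamma)$, with two orientation-reversing similarities of ratio $1/\sqrt3$. It gives smaller systems and serves as a consistency check on the computed values.

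The main obstacle is purely the bookkeeping at degrees $3$ and $4$. There the rotations by $\pm\pi/3$ mix all monomials of a given degree and produce the factors of $\sqrt3$; the degree-4 system is $5\times5$ and yields denominators such as $542640$. We would carry this out symbolically, for instance with a computer algebra check, level by level, using the reflection symmetry to decouple the even and odd parts in $x-\frac12$.
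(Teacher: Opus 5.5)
Your proposal is correct and follows the paper's route. The paper's proof likewise rests on the self-similarity identity $J[f]=\frac14\sum_{i=1}^4 J[f\circ t_i]$ (the Barnsley--Demko recursion) and delegates the level-by-level linear solves to a Mathica computation.

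Your extras, the reflection symmetry about $x=\frac12$ and the two-map check with ratio $1/\sqrt3$, are both valid. For invertibility, a cleaner justification than ``diagonally dominant'' is that rotations are isometries for any rotation-invariant norm on homogeneous degree-$n$ polynomials. Hence $\frac{1}{4\cdot 3^n}\sum_m M_n(A_m)$ has norm at most $3^{-n}<1$.
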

\begin{proof}
 These formulas can be derived by self-similarity arguments, using the recursion discussed in \cite[Thm~9]{barnsley1985iterated}. The key to this is the fact (see, e.g., \cite[\S3.1]{barnsley1985iterated}) that, for measurable $f$, 
     \[J[f]=
 \frac{1}{4}\sum_{i=1}^4 J[f\circ t_i],\]
where $t_1,\ldots,t_4$ are the similarities defined in \eqref{e:tmDef}. 
A Mathematica notebook carrying out the derivations can be found at \url{https://github.com/SergioG5/FractalDG}.
\end{proof}

\section*{Acknowledgements}
The authors are grateful to Carlo Marcati and Iain Smears for 
helpful 
discussions in relation to the proofs of Lemmas \ref{lem:wedge} and \ref{lem:Poincare}, respectively. 
We also acknowledge 
insightful 
 preliminary investigations into mesh refinement algorithms carried out by Yuvan Raja of the Royal Grammar School, Guildford, UK, as part of an Original Research in Science (ORIS) project.

DH was supported by the EPSRC grant EP/V053868/1.  
   DH and AM thank the Isaac Newton Institute for Mathematical Sciences for support and hospitality during the programme ``Mathematical Theory and Applications of Multiple Wave Scattering'', 
 supported by EPSRC grant EP/R014604/1. 
DH and AM were supported by the Swedish Research Council under grant no.~2021-06594 while in residence at Institut Mittag-Leffler in Djursholm, Sweden during the fall semester of 2025. 
AM and SG are members of GNCS-INdAM.
 AM acknowledges support by the PRIN project ``ASTICE'' (202292JW3F), funded by the European Union--NextGenerationEU.  

\begin{table}[htbp]\centering
\begin{tabular}{|l|l|l|}
\hline
$\Gamma,\Omega$& Koch curve and snowflake& \S\ref{ss:Koch}\\
$\dimH(\cdot),d=\dimH(\Gamma)$& Hausdorff dimension& \S\ref{ss:Koch}\\
$H^1_0(\Omega), H^1(\Omega), W^1_p(\Omega)$ & Sobolev spaces & \S\ref{ss:BVP}--\ref{ss:H22}\\
$C^{0,\alpha}(\overline\Omega),H^{2,2}_\mu(\Omega), D^2$ & H\"older and weighted Sobolev space, second derivatives & \S\ref{ss:H22}\\
$\delta,\mu$ & Boundary distance function and integrability exponent & \S\ref{ss:H22}\\
$s_1,\ldots,s_7, t_1,\ldots, t_4,R$ & Contractions of $\Omega$ and $\Gamma$, rotation matrix &\S\ref{ss:DecompOmega}--\ref{ss:DecompKoch}\\
$\cT, K, h_K,\ell(K)$ & Mesh, element, element diameter, contraction number &\S\ref{ss:Mesh}\\
$\cF_I,\cF_B,\cF$ &Set of interior, boundary and all faces &\S\ref{ss:Faces}\\
$F, K_\pm, h_F$ & Face, elements adjacent to a face, face diameter &\S\ref{ss:Faces}\\
$\cT_\ell, \cT_\ell',\cT'_{\ell,\ell^*}$ & Families of LQU meshes &\S\ref{ss:MeshSeq}\\
$\bv_1^K,\ldots,\bv_6^K,\widehat\delta_K$ & Element vertices and vertex-boundary distance &\S\ref{ss:MeshSeq}\\
$\IP^p(K), V_h,\IP^p(\cT)$ & Polynomial and piecewise-polynomial spaces &\S\ref{ss:PPoly}\\
$\cH^d, \IL_2(F), \|\cdot\|_{\IL_2(F)}$ & Hausdorff measure, Lebesgue space and norm  &\S\ref{ss:Traces}\\
$C_\Tr$ & Trace continuity constant &\S\ref{ss:Traces}\\
$\trd,\tru$ & Wedges in $K_\mp$ adjacent to $F$, &\S\ref{s:Wedge}\\
$\loz,\lozenge=\partial\loz$ & Union of two wedges and its boundary &\S\ref{s:Wedge}\\
$S_1^\pm,S_2^\pm,\vee,\wedge$ & Segments bounding the wedges $\trd$, $\tru$, their unions&\S\ref{s:Wedge}\\
$\bn$ & Outward-pointing normal vector on $\lozenge$ &\S\ref{s:Wedge}\\
$\cI_\trd(w,v), \cI_\tru(w,v)$ & Wedge proxies for Dirichlet--Neumann duality on $F$ & \eqref{e:IdownIup}\\
$C_p$ & Norm equivalence constant & \eqref{e:DiscreteBoundOnI}\\
$v_h^\pm$ & Global polynomial such that $v_h^\pm|_{K_\pm}=v_h|_{K_\pm}$ & \S\ref{s:DG}\\
$\aDG(\cdot,\cdot), \cL(\cdot),\eta$ & SIP-DG bilinear and linear forms, penalty parameter &\S\ref{s:DG},\eqref{e:aDefGen}\\
$u\DG$ & SIP-DG solution &\eqref{e:DG}\\
$\|\cdot\|\DG$, $\|\cdot\|\DGp$ & SIP-DG norms & \S\ref{s:Coercivity}, \eqref{e:Norms}\\
$V^\mu$ & Energy space & \eqref{e:V}\\
$N_1^\trd,N_1^\tru,N_2^\trd,N_2^\tru$ & $H^2$-type and $H^1$-type norms on wedges &\S\ref{s:Continuity}\\
$C_{p,q,1},C_{p,q,2},\ast_K$ & Inverse inequality constants and segment star in $K$ &\S\ref{s:Continuity}\\
$\Csip$ & SIP-DG continuity constant & \eqref{e:Cont}\\
$\delta_K$ & Boundary distance & \eqref{e:deltaK}\\
$C_{H^2}, C_V$ & Approximation bound constants & (\ref{e:H2approx}--\ref{e:H22approxK})\\
$\Xi_1,\ldots,\Xi_6$ & Similar copies of $\deK\cap\deO$ & Ass.~\ref{ass:H22}\\
$\psi_K,\xi_F, R_K,R_F$ & Element and face similarities and rotations & (\ref{e:psiK}-\ref{e:Face})\\
$\widehat K,W_i,S_i,F_i,\widehat\trd,\widehat\vee,\widehat F_\pm $ & Reference element, wedges, faces and sides & \S\ref{ss:RefDom}\\
$\Np,\NT,\widehat{\phi_i},\phi_i^m,\chi_K$ & Local dimension, element number, basis functions &\S\ref{ss:Basis}\\
$N, \su, \sfb$ & DOF number, solution and right-hand side vectors &\S\ref{ss:LSE}\\
$\ADG, \sG, \sC, \sP,g_h,c_h,p_h$ & Galerkin matrices and bilinear forms & \S\ref{ss:LSE}\\
$C^{(F)}_{mn},P^{(F)}_{mn}$ & Galerkin matrix blocks& \S\ref{ss:C}--\ref{ss:P}\\
$\lambda,\lambda_h,M$ & Continuous and discrete eigenvalues, mass matrix & \S\ref{ss:eigen}\\
$U,\theta_*,C_{q,\theta_*,\mu},C_{\mathrm{KMR}}$ & Sector, opening, bounding constants & Lem.~\ref{lem:wedge}\\
$\widetilde C_q, Q,\bp_0,\bp_1,\bp_2,\bp_*$ & Trace constant for $\vee/\wedge$, quadrilateral, vertices & Lem.~\ref{lem:dnu2}\\
$E_\trd,C_P$ & Wedge extension operator and Poincar\'e constant & Lem.~\ref{lem:Poincare}\\
$E_\Omega,C_{\mathrm{LP}}$ & Snowflake extension operator, tube constant & Lem.~\ref{lem:Embed}\\
$I_i[f],I_{1i}[f],T,W_{1i},J[f]$ & Integrals on wedges and faces, wedge subsets
&App.~\ref{app:Integration}
\\\hline
\end{tabular}
\caption{List of the notation used in the paper.}
\label{tab:Notation}
\end{table}


\addcontentsline{toc}{section}{References}
\begin{thebibliography}{10}

\bibitem{achdou2016transmission}
{\sc Y.~Achdou and T.~Deheuvels}, {\em A transmission problem across a fractal
  self-similar interface}, SIAM J. Multiscale Model. Simul., 14 (2016),
  pp.~708--736.

\bibitem{Achdou07}
{\sc Y.~Achdou, C.~Sabot, and N.~Tchou}, {\em {Transparent boundary conditions
  for the Helmholtz equation in some ramified domains with a fractal
  boundary}}, J. Comput. Phys., 220 (2007), pp.~712--739.

\bibitem{arnold1982interior}
{\sc D.~N. Arnold}, {\em An interior penalty finite element method with
  discontinuous elements}, SIAM J. Numer. Anal., 19 (1982), pp.~742--760.

\bibitem{BGhpCurv}
{\sc I.~Babu{\v{s}}ka and B.~Q. Guo}, {\em The {$h$}-{$p$} version of the
  finite element method for domains with curved boundaries}, SIAM J. Numer.
  Anal., 25 (1988), pp.~837--861.

\bibitem{bagnerini2006finite}
{\sc P.~Bagnerini, A.~Buffa, and E.~Vacca}, {\em Finite elements for a
  prefractal transmission problem}, C. R. Math., 342 (2006), pp.~211--214.

\bibitem{Bagnerini13}
{\sc P.~Bagnerini, A.~Buffa, and E.~Vacca}, {\em Mesh generation and numerical
  analysis of a {G}alerkin method for highly conductive prefractal layers},
  Appl. Numer. Math., 65 (2013), pp.~63--78.

\bibitem{bandt1991self}
{\sc C.~Bandt}, {\em {Self-similar sets 5. Integer matrices and fractal tilings
  of $\mathbb{R}^n$}}, Proc. Amer. Math. Soc.,  (1991), pp.~549--562.

\bibitem{Banjai:2007}
{\sc L.~Banjai}, {\em Eigenfrequencies of fractal drums}, J. Comput. Appl.
  Math., 198 (2007), pp.~1--18.

\bibitem{bannister2022acoustic}
{\sc J.~Bannister, A.~Gibbs, and D.~P. Hewett}, {\em Acoustic scattering by
  impedance screens/ cracks with fractal boundary: well-posedness analysis and
  boundary element approximation}, Math. Models Methods Appl. Sci., 32 (2022),
  pp.~291--319.

\bibitem{bannister2026scattering}
{\sc J.~Bannister, D.~P. Hewett, and A.~Gibbs}, {\em {Scattering by fractal
  inhomogeneities via geometry-conforming Galerkin methods for the
  Lippmann-Schwinger equation}}.
\newblock arXiv: 2602.05005.

\bibitem{BaVi:14}
{\sc M.~Barnsley and A.~Vince}, {\em Fractal tilings from iterated function
  systems}, Discrete Comput. Geom., 51 (2014), pp.~729--752.

\bibitem{barnsley1985iterated}
{\sc M.~F. Barnsley and S.~Demko}, {\em Iterated function systems and the
  global construction of fractals}, Proc. Roy. Soc. Lond. A., 399 (1985),
  pp.~243--275.

\bibitem{Biegert:09}
{\sc M.~Biegert}, {\em On traces of {S}obolev functions on the boundary of
  extension domains}, Proc. Am. Math. Soc., 137 (2009), pp.~4169--4176.

\bibitem{brenner2008mathematical}
{\sc S.~C. Brenner and L.~R. Scott}, {\em {The mathematical theory of finite
  element methods}}, Springer, 2008.

\bibitem{caetano2025integral}
{\sc A.~M. Caetano, S.~N. Chandler-Wilde, X.~Claeys, A.~Gibbs, D.~P. Hewett,
  and A.~Moiola}, {\em Integral equation methods for acoustic scattering by
  fractals}, Proc. R. Soc. A.,  (2025), p.~48120230650.

\bibitem{caetano2024hausdorff}
{\sc A.~M. Caetano, S.~N. Chandler-Wilde, A.~Gibbs, D.~P. Hewett, and
  A.~Moiola}, {\em {A Hausdorff-measure boundary element method for acoustic
  scattering by fractal screens}}, Numer. Math., 156 (2024), pp.~463--532.

\bibitem{CaChHe25}
{\sc A.~M. Caetano, S.~N. Chandler-Wilde, and D.~P. Hewett}, {\em {Properties
  of IFS attractors with non-empty interiors, related rough domains, and
  associated function spaces and scattering problems}}.
\newblock arXiv:2511.15213.

\bibitem{caetano2019density}
{\sc A.~M. Caetano, D.~P. Hewett, and A.~Moiola}, {\em {Density results for
  Sobolev, Besov and Triebel-Lizorkin spaces on rough sets}}, J. Funct. Anal.,
  281 (2021), p.~109019.

\bibitem{cangiani2022hp}
{\sc A.~Cangiani, Z.~Dong, and E.~Georgoulis}, {\em {$hp$-version discontinuous
  Galerkin methods on essentially arbitrarily-shaped elements}}, Math. Comp.,
  91 (2022), pp.~1--35.

\bibitem{cangiani2017hp}
{\sc A.~Cangiani, Z.~Dong, E.~H. Georgoulis, and P.~Houston}, {\em {hp-Version
  Discontinuous Galerkin Methods on Polygonal and Polyhedral Meshes}},
  Springer, 2017.

\bibitem{capitanelli2010robin}
{\sc R.~Capitanelli}, {\em Robin boundary condition on scale irregular
  fractals}, Commun. Pure Appl. Anal., 9 (2010), pp.~1221--1234.

\bibitem{capitanelli2015weighted}
{\sc R.~Capitanelli and M.~A. Vivaldi}, {\em Weighted estimates on fractal
  domains}, Mathematika, 61 (2015), pp.~370--384.

\bibitem{capitanelli2016dynamical}
\leavevmode\vrule height 2pt depth -1.6pt width 23pt, {\em Dynamical
  quasi-filling fractal layers}, SIAM J. Math. Anal., 48 (2016),
  pp.~3931--3961.

\bibitem{carvalho2012hausdorff}
{\sc A.~Carvalho and A.~Caetano}, {\em {On the Hausdorff dimension of
  continuous functions belonging to H\"older and Besov spaces on fractal
  d-sets}}, J. Fourier Anal. Appl., 18 (2012), pp.~386--409.

\bibitem{Castillo:2002}
{\sc P.~Castillo}, {\em Performance of discontinuous {G}alerkin methods for
  elliptic {PDE}s}, SIAM J. Sci. Comput., 24 (2002), pp.~524--547.

\bibitem{cefalo2021approximation}
{\sc M.~Cefalo, S.~Creo, M.~Gallo, M.~R. Lancia, and P.~Vernole}, {\em
  {Approximation of 3D Stokes flows in fractal domains}}, in Fractals in
  Engineering: Theoretical Aspects and Numerical Approximations, Springer,
  2021, pp.~27--53.

\bibitem{cefalo2023fractal}
{\sc M.~Cefalo, S.~Creo, M.~R. Lancia, and J.~Rodr{\'\i}guez-Cuadrado}, {\em
  Fractal mixtures for optimal heat draining}, Chaos, Solitons \& Fractals, 173
  (2023), p.~113750.

\bibitem{chandler2018well}
{\sc S.~N. Chandler-Wilde and D.~P. Hewett}, {\em {Well-posed PDE and integral
  equation formulations for scattering by fractal screens}}, SIAM J. Math.
  Anal., 50 (2018), pp.~677--717.

\bibitem{chandler2021boundary}
{\sc S.~N. Chandler-Wilde, D.~P. Hewett, A.~Moiola, and J.~Besson}, {\em
  Boundary element methods for acoustic scattering by fractal screens}, Numer.
  Math., 147 (2021), pp.~785--837.

\bibitem{Ciarlet78}
{\sc P.~G. Ciarlet}, {\em The Finite Element Method for Elliptic Problems},
  North-Holland, 1978.

\bibitem{claret2026helmholtz}
{\sc G.~Claret}, {\em {Helmholtz transmission problem and intrinsic impedance
  scattering problem on extension domains}}.
\newblock arXiv:2601.14866.

\bibitem{claret2024convergence}
{\sc G.~Claret, A.~Rozanova-Pierrat, and A.~Teplyaev}, {\em {Convergence of
  layer potentials and Riemann-Hilbert problem on extension domains}}.
\newblock arXiv:2411.03767.

\bibitem{creo2020magnetostatic}
{\sc S.~Creo, M.~R. Lancia, P.~Vernole, M.~Hinz, and A.~Teplyaev}, {\em
  Magnetostatic problems in fractal domains}, in Analysis, Probability and
  Mathematical Physics on Fractals, World Scientific, 2020, pp.~477--502.

\bibitem{di2011mathematical}
{\sc D.~A. Di~Pietro and A.~Ern}, {\em {Mathematical Aspects of Discontinuous
  Galerkin Methods}}, Springer, 2011.

\bibitem{Fal}
{\sc K.~Falconer}, {\em Fractal Geometry: Mathematical Foundations and
  Applications}, Wiley, 3rd ed., 2014.

\bibitem{farkas2001sobolev}
{\sc W.~Farkas and N.~Jacob}, {\em {Sobolev spaces on non smooth domains and
  Dirichlet forms related to subordinate reflecting diffusions}}, Math. Nachr.,
  224 (2001), pp.~75--104.

\bibitem{disjoint}
{\sc A.~Gibbs, D.~Hewett, and A.~Moiola}, {\em Numerical quadrature for
  singular integrals on fractals}, Numer. Alg., 92 (2023), pp.~2071--2124.

\bibitem{nondisjoint}
{\sc A.~Gibbs, D.~P. Hewett, and B.~Major}, {\em Numerical evaluation of
  singular integrals on non-disjoint self-similar fractal sets}, Numer. Alg.,
  97 (2024), pp.~311--343.

\bibitem{grebenkov2013geometrical}
{\sc D.~S. Grebenkov and B.-T. Nguyen}, {\em {Geometrical structure of
  Laplacian eigenfunctions}}, SIAM Rev., 55 (2013), pp.~601--667.

\bibitem{Grisvard85}
{\sc P.~Grisvard}, {\em Elliptic Problems in Nonsmooth Domains}, Pitman,
  Boston, 1985.

\bibitem{grochenig1994self}
{\sc K.~Grochenig and A.~Haas}, {\em Self-similar lattice tilings}, J. Fourier
  Anal. Appl., 1 (1994), pp.~131--170.

\bibitem{Hewett25}
{\sc D.~P. Hewett}, {\em {Discontinuous piecewise polynomial approximation on
  non-Lipschitz domains}}.
\newblock arXiv: 2511.22628.

\bibitem{Hinz}
{\sc M.~Hinz, S.~N. Chandler-Wilde, and D.~P. Hewett}, {\em Kernels of trace
  operators via fine continuity}.
\newblock arXiv:2507.04536.

\bibitem{hinz2023boundary}
{\sc M.~Hinz, A.~Rozanova-Pierrat, and A.~Teplyaev}, {\em {Boundary value
  problems on non-Lipschitz uniform domains: stability, compactness and the
  existence of optimal shapes}}, Asymptotic Anal., 134 (2023), pp.~25--61.

\bibitem{hutchinson1981fractals}
{\sc J.~E. Hutchinson}, {\em Fractals and self-similarity}, Indiana Univ. Math.
  J., 30 (1981), pp.~713--747.

\bibitem{joly2024high}
{\sc P.~Joly, M.~Kachanovska, and Z.~Moitier}, {\em High-order numerical
  integration on self-affine sets}.
\newblock arXiv:2410.00637.

\bibitem{Jones}
{\sc P.~W. Jones}, {\em Quasiconformal mappings and extendability of functions
  in {S}obolev spaces}, Acta Math., 147 (1981), pp.~71--88.

\bibitem{Keesling:99}
{\sc J.~Keesling}, {\em The boundaries of self-similar tiles in
  $\mathbb{R}^n$}, Topol. Appl., 94 (1999), pp.~195--205.

\bibitem{kozlov1997elliptic}
{\sc V.~Kozlov, V.~Maz'ya, and J.~Rossmann}, {\em Elliptic boundary value
  problems in domains with point singularities}, American Mathematical Soc.,
  1997.

\bibitem{lancia2002transmission}
{\sc M.~R. Lancia}, {\em A transmission problem with a fractal interface}, Z.
  Anal. Anwend., 21 (2002), pp.~113--133.

\bibitem{lancia2012numerical}
{\sc M.~R. Lancia, M.~Cefalo, and G.~Dell~Acqua}, {\em {Numerical approximation
  of transmission problems across Koch-type highly conductive layers}}, Appl.
  Math. Comput., 218 (2012), pp.~5453--5473.

\bibitem{lancia2010irregular}
{\sc M.~R. Lancia and P.~Vernole}, {\em Irregular heat flow problems}, SIAM J.
  Math. Anal., 42 (2010), pp.~1539--1567.

\bibitem{lancia2020stokes}
\leavevmode\vrule height 2pt depth -1.6pt width 23pt, {\em {The Stokes problem
  in fractal domains: Asymptotic behaviour of the solutions}}, Discrete Contin.
  Dyn. Syst. - Ser. S, 13 (2020).

\bibitem{lapidus1995fractals}
{\sc M.~L. Lapidus}, {\em Fractals and vibrations: can you hear the shape of a
  fractal drum?}, Fractals, 3 (1995), pp.~725--736.

\bibitem{lapidus1996snowflake}
{\sc M.~L. Lapidus, J.~W. Neuberger, R.~J. Renka, and C.~A. Griffith}, {\em
  Snowflake harmonics and computer graphics: numerical computation of spectra
  on fractal drums}, Int. J. Bifurcation Chaos, 6 (1996), pp.~1185--1210.

\bibitem{LapidusPearse2006}
{\sc M.~L. Lapidus and E.~P.~J. Pearse}, {\em A tube formula for the {Koch}
  snowflake curve, with applications to complex dimensions}, J. Lond. Math.
  Soc., II. Ser., 74 (2006), pp.~397--414.

\bibitem{Mclean}
{\sc W.~McLean}, {\em Strongly Elliptic Systems and Boundary Integral
  Equations}, CUP, 2000.

\bibitem{Mityagin20}
{\sc B.~S. Mityagin}, {\em The zero set of a real analytic function}, Math.
  Notes, 107 (2020), pp.~529--530.

\bibitem{Neuberger_Sibien_Swift:2006}
{\sc J.~M. Neuberger, N.~Sieben, and J.~W. Swift}, {\em Computing
  eigenfunctions on the {K}och snowflake: a new grid and symmetry}, J. Comput.
  Appl. Math., 191 (2006), pp.~126--142.

\bibitem{NystromThesis}
{\sc K.~Nystr{\"o}m}, {\em Smoothness properties of solutions to Dirichlet
  problems in domains with a fractal boundary}, PhD thesis, University of
  Ume\aa{}, 1994.

\bibitem{Rogers}
{\sc L.~G. Rogers}, {\em Degree-independent {S}obolev extension on locally
  uniform domains}, J. Funct. Anal., 235 (2006), pp.~619--665.

\bibitem{rosler2024res}
{\sc F.~R{\"o}sler and A.~Stepanenko}, {\em Computing scattering resonances of
  rough obstacles}.
\newblock arXiv:2402.00846.

\bibitem{rosler2024computing}
\leavevmode\vrule height 2pt depth -1.6pt width 23pt, {\em {Computing
  eigenvalues of the Laplacian on rough domains}}, Math. Comput., 93 (2024),
  pp.~111--161.

\bibitem{sapoval1991vibrations}
{\sc B.~Sapoval, T.~Gobron, and A.~Margolina}, {\em Vibrations of fractal
  drums}, Phys. Rev. Lett., 67 (1991), p.~2974.

\bibitem{sayas}
{\sc F.-J. Sayas, T.~Brown, and M.~Hassell}, {\em Variational Techniques for
  Elliptic Partial Differential Equations}, CRC Press, 2019.

\bibitem{strichartz2000evaluating}
{\sc R.~S. Strichartz}, {\em Evaluating integrals using self-similarity},
  American Math. Monthly, 107 (2000), pp.~316--326.

\bibitem{strichartz1999geometry}
{\sc R.~S. Strichartz and Y.~Wang}, {\em {Geometry of self-affine tiles I}},
  Indiana U. Math. J.,  (1999), pp.~1--23.

\bibitem{vrscay1989iterated}
{\sc E.~R. Vrscay and C.~J. Roehrig}, {\em Iterated function systems and the
  inverse problem of fractal construction using moments}, in Computers and
  Mathematics, Springer, 1989, pp.~250--259.

\bibitem{ziemer}
{\sc W.~Ziemer}, {\em Weakly Differentiable Functions}, Springer, 1989.

\end{thebibliography}
\end{document}